 \renewcommand{\Re}{{\operatorname{Re}\,}}
 \renewcommand{\Im}{{\operatorname{Im}\,}}
 \renewcommand{\epsilon}{\varepsilon}
 \newcommand{\dist}{{\operatorname{dist}}}
 \newcommand{\T}{{\mathbf T}^n}
 \newcommand{\R}{{\mathbb R}}
 \newcommand{\C}{{\mathbb C}}
 \newcommand{\Z}{{\mathbb Z}}
 \newcommand{\supp}{{\operatorname{Supp\,}}}
 \renewcommand{\phi}{\varphi}
 \newcommand{\gcal}{\mathcal{G}}
 \newtheorem{theo}{{\sc Theorem}}[section]
 \newtheorem{cor}[theo]{{\sc Corollary}}
 \newtheorem{lem}[theo]{{\sc Lemma}}
 \newtheorem{prop}[theo]{{\sc Proposition}}
 \newenvironment{rem}{\medskip\noindent{\it Remark:\/} }{\medskip}
 \newtheorem{defn}[theo]{{\sc Definition}}
 \newcommand{\sgn}{\operatorname{sgn}}
 \newcommand{\cs}{$\clubsuit$}
 \newcommand{\eo}{\epsilon_{\circ}}
 \newcommand{\h}{h}
 \newcommand{\qc}{q^{\mathbb{C}}}
\title[nodal intersection]{Intersection bounds for nodal sets of planar Neumann eigenfunctions with interior analytic curves}
\begin{document}
\setcounter{page}{1}
\author{Layan El-Hajj}\address{Department of Mathematics and Statistics, McGill University, Montreal, Canada }\email{elhajj@math.mcgill.ca}
\author{John A. Toth}\address{Department of Mathematics and Statistics, McGill University, Montreal, Canada }
 \email{jtoth@math.mcgill.ca} \thanks{The second author's research was partially supported by NSERC grant \# OGP0170280. This manuscript was partly written during May, 2012 when the second author was visiting Beijing. He wishes to acknowledge the financial support and hospitality of the  Mathematical Sciences Center of Tsinghua University.
}

\maketitle

\begin{abstract} Let $\Omega \subset \R^2$ be a bounded piecewise smooth domain and  $\phi_\lambda$ be a Neumann (or Dirichlet) eigenfunction with eigenvalue $\lambda^2$ and nodal set ${\mathcal N}_{\phi_{\lambda}} = \{ x \in \Omega; \phi_{\lambda}(x) = 0 \}.$  Let $H \subset \Omega$ be an interior $C^{\omega}$ curve.  Consider the intersection number
$$ n(\lambda,H):= \#  ( H \cap {\mathcal N}_{\phi_{\lambda}} ).$$
We first prove that for general piecewise-analytic domains, and  under an appropriate ``goodness" condition on $H$   (see  Theorem \ref{mainthm1}),
\begin{equation}\label{estimate}
n(\lambda,H) = {\mathcal O}_H(\lambda) \,\, 
\end{equation}
as $\lambda \rightarrow \infty.$
Then, using Theorem \ref{mainthm1}, we prove in Theorem \ref{mainthm2} that the bound in  (\ref{estimate}) is satisfied in the  case of  quantum ergodic (QE) sequences of interior eigenfunctions, provided $\Omega$ is convex and $H$ has strictly positive geodesic curvature.
\end{abstract}

\section{Introduction}
 Let $\Omega \subset \R^2$ be a real analytic, bounded  planar domain with boundary $\partial \Omega$ and $H \subset \mathring{\Omega}$ a real-analytic interior curve. We consider here the Neumann (or Dirichlet)
 eigenfunctions $\varphi_{\lambda}$ on real analytic plane domains $\Omega \subset \mathbb{R}^{2}$ with
\[
\left\{ {\begin{array}{*{20}c}
   { - \Delta \varphi _\lambda   = \lambda ^2 \varphi _\lambda  } & {\text{in }\Omega }  \\
   {\partial _\nu  \varphi _\lambda   = 0} \,\,(Neumann), \,\,\,\, {\varphi_{\lambda} = 0} \,\,(Dirichlet) & {\text{on }\partial \Omega } . \\

 \end{array} } \right.
\]
\newline
The nodal set of  $\phi_{\lambda}$ is by definition
\[
N_{\varphi _\lambda  }  = \{ x \in \Omega :\varphi _\lambda  (x)= 0\} .\]
Our main interest here involves estimating from above the number of intersection points of the nodal
lines of Neumann eigenfunctions (the connected components of the nodal set)
with a fixed analytic curve $H$ contained in the interior of
the domain $\Omega.$ 
We define the intersection number for Dirichlet data along H  by
\begin{equation} \label{dirichlet}
n(\lambda,H) = \# \{ N_{\phi_{\lambda}} \cap H \}. \end{equation}
 We recall from \cite{TZ} that  an interior curve $H$ is said to be {\em good} provided for some $\lambda_0 >0$ there is a constant $C=C(\lambda_0)>0$ such that for all $\lambda \geq \lambda_0,$
  \begin{equation} \label{Good}
  \int_H |\phi_{\lambda}|^2 d\sigma \geq e^{-C \lambda}. \end{equation}
   Assuming the goodness condition $(\ref{Good}),$ it is proved in \cite{TZ}   that
  \begin{equation} \label{tzbound} n(\lambda,H) = {\mathcal O}_H(\lambda).\end{equation}


It follows from unique continuation for the interior eigenfunctions and the potential layer formula  $\phi_{\lambda}(x) = \int_{\partial \Omega} N(x,r(s);\lambda) \phi_{\lambda}(s) d\sigma(s); \,\, x \in \text{int}(\Omega),$ that (\ref{Good}) is satisfied in the special case where $H=\partial \Omega.$ 
The goodness property (\ref{Good}) seems very likely generic (see \cite{BR}).  However, it is difficult to prove in concrete
examples that the same upper bound  is
satisfied for  all  eigenfunctions with $\lambda \geq \lambda_0.$  Indeed, in \cite{TZ},
only the special curve $\partial \Omega$ is shown to be good.
Recently, Jung \cite{Ju} has shown that in the boundaryless case,
closed horocycles  of hyperbolic surfaces of finite volume are good in the sense of
(\ref{Good}) and hence satisfy the ${\mathcal O}(\lambda)$ upper
bounds. In the case of flat 2-torus, Bourgain and Rudnick
\cite{BR} have recently proved $\lessapprox \lambda$ upper
bounds when $H$ is real-analytic with nowhere vanishing curvature
(they also prove $\gtrapprox \lambda^{1-\epsilon}$ lower bounds in
the case where $H$ is real-analytic and non-geodesic).

 \,\,   Despite these results, it is  clear that not all curves are good in the sense of (\ref{Good}). As a counterexample, consider the Neumann problem in the unit disc.  The eigenfunctions in polar variables $(r,\theta) \in (0,1] \times [0,2\pi]$ are $\phi_{m,n}^{even}(r,\theta) = C_{m,n} \cos m\theta J_{m}(j'_{m,n}r)$ and $\phi_{m,n}^{odd}(r,\theta) = C_{m,n} \sin m\theta J_{m}(j'_{m,n}r).$ Here, $J_m$ is the $m$-th integral Bessel function and $j'_{m,n}$ is the $m$-th critical point of $J_m$. The eigenvalues are $\lambda_{m,n}^2 = (j'_{m,n})^2$.  Fix $m \in \Z^+$ and consider $$ H_{m} = \{ (r, \theta); \theta = \frac{2\pi k}{m}; \, k=0,..., m-1 \}.$$
Then, clearly for any $n=0,1,2,...$ $\phi_{m,n}^{odd}|_{H_m} = 0$ and so
in particular $H_m$ is not good in the sense of (\ref{Good}).

\smallskip

The point of this paper is threefold:
\begin{itemize}

\item[(i)] To give an alternative proof of the nodal intersection bound of Toth and Zelditch  for interior curves $H$ under an a revised goodness condition on $H$ (see Theorem \ref{mainthm1}).

\item[(ii)] To establish exponential lower and upper bounds (see Theorem \ref{mainthm3}) for the Grauert tube maxima of analytic continuations of restrictions of  quantum ergodic (QE) eigenfunctions to  positively-curved $H$ in annular subdomains of the complexification of $H.$  

\item[(iii)] To use the lower bounds in (ii) combined with (i) to explicitly identify a large class of {\em interior} analytic curves in planar billiards that satisfy the $n(\lambda,H) = {\mathcal O}_H(\lambda)$ intersection bounds for interior QE sequences of eigenfunctions. That is the content of Theorem \ref{mainthm2}.
\end{itemize}


Moreover, for both Theorems \ref{mainthm1} and \ref{mainthm2},  the $n(\lambda,H)$ upper bound is proved  using the frequency function method  of F.H. Lin combined with some semiclassical microlocal analysis, rather than the Jensen argument in \cite{TZ}.  Indeed,  the revised  goodness condition (see Theorem \ref{mainthm1}) that is needed for all our results follows here readily from the main frequency function bound for the number of complex zeros in a complex thickening of $H.$

Our first main theorem is:
\begin{theo} \label{mainthm1}
Let $\Omega$ be a bounded, piecewise-analytic domain and  $H \subset
\mathring{\Omega}$ an interior, $C^{\omega}$ curve with restriction map $\gamma_H:C^0(\Omega) \rightarrow C^0(H).$ Let $H_{\eo}^{\C}$ denote the complex radius $\eo >0$ Grauert tube containing $H$ as its totally real submanifold and
 $(\gamma_H \phi_{\lambda})^{\C}$ be the holomorphic continuation of $\gamma_H\phi_{\lambda}$ to $H_{\eo}^{\C}.$  Suppose the curve $H$ satisfies the {\em revised goodness condition}
 \begin{equation} \label{revgood}
  \sup_{z \in H_{\eo}^{\C}} | (\gamma_H \phi_{\lambda})^{\C}(z)| \geq e^{-C \lambda}. \end{equation}
 for some $C>0.$ Then, there is a constant $C_{\Omega,H}>0$ such that for all $\lambda \geq \lambda_0,$
$$ n(\lambda,H) \leq C_{\Omega,H} \lambda.$$
 \end{theo}

Our results here are inherently semiclassical and so we introduce the parameter $h$  which takes values in the  sequence $\lambda_j^{-1}; j=1,2,3,....$  By a slight abuse of notation, we denote the Neumann (or Dirichlet) eigenfunctions $\phi_{\lambda}$ by $\phi_h,$  and  write $n(h,H) := n(\lambda,H).$  The restrictions to $H$ are denoted by $\phi_h^H:= \gamma_H \phi_h$ where $\gamma_H: C^0(\Omega) \to C^{0}(H)$ is the restriction operator $\gamma_H f = f|_{H}$. In the special case where $H= \partial \Omega$ we denote the Neumann (resp. Dirichlet) boundary traces by  $\phi_h^{\partial \Omega} := \gamma_{\partial \Omega} \phi_h$ (resp. $\phi_h^{\partial \Omega} :=  \gamma_{\partial \Omega} h \partial_{\nu} \phi_h$).



Our second result deals with the case of quantum ergodic sequences of eigenfunctions. We recall that given a piecewise smooth manifold $\Omega$ with boundary, a sequence of $L^2$-normalized eigenfunctions $(\phi_{h_{j_k}})_{k=1}^{\infty}$ is  {\em quantum ergodic} (QE) if for any $a \in S^{0}(T^*\Omega)$ with $\pi ( \text{supp}(a)) \subset \text{Int}(\Omega),$
 $$ \langle Op_{h_{j_k}}(a) \phi_{h_{j_k}}, \phi_{h_{j_k}} \rangle \sim_{h_{j_k} \to 0^+} \int_{S^*\Omega} a(x,\xi) d\mu,$$
 where $d\mu$ is Liouville measure.  By a theorem of Zelditch and Zworski \cite{ZZ}, for a domain with ergodic billiards, a density-one subset of eigenfunctions are quantum ergodic.  The domain $\Omega$ is {\em quantum uniquely ergodic} (QUE) if all subsequences are QE.\\
We also recall from \cite{TZ2} the {\em quantum ergodic restriction} (QER) problem, which is to determine conditions on a hypersurface $\Gamma$ so that the restrictions  $\gamma_{\Gamma}\phi_{h_{j}}$ to $\Gamma$ on a Riemannian manifold $(M, g)$  with ergodic geodesic  flow are quantum ergodic along $\Gamma$.

 An important consequence of Theorem \ref{mainthm1} concerns convex billiards.

  \begin{theo} \label{mainthm2} Let $\Omega$ be a bounded, piecewise-analytic convex domain and $H$ be a   $C^{\omega}$ interior curve with strictly positive geodesic curvature. Let $(\phi_{h_{j_k}})_{k=1}^{\infty}$ be a QE sequence of  Neumann or Dirichlet eigenfunctions in $\Omega.$ Then,
  $$n(h_{j_k},H) = {\mathcal O}_{H,\Omega}(h_{j_k}^{-1}).$$
  \end{theo}
  
   The proof of Theorem \ref{mainthm2} follows by showing that positively curved $H$ are good in the sense of (\ref{revgood}) and then applying Theorem \ref{mainthm1}.

 When $\Omega$ is a convex ergodic billiard, it follows from the QE result of Zelditch and Zworski \cite{ZZ} that Theorem \ref{mainthm2} applies to at least a density-one subsequence of eigenfunctions. To our knowledge, it is an open question as to whether or not there are ergodic billiards that are QUE.

  In the course of proving goodness for curved $H$, we actually prove a much stronger result (see Proposition \ref{mainthm2prop} and Theorem \ref{mainthm3} below).  We show that the Grauert tube maximum of complexified eigenfunction restrictions $ \max_{z \in H_{\eo}^{\C}} | (\gamma_H\phi_{h}^{\C}(z)|$ is in fact exponentially  increasing in $h$ provided the Grauert tube radius $\eo >0$ is sufficiently small. 
 We summarize this in the following theorem which seems of independent interest.
 
To state our next result, we consider  weight function 
$$S(t):= \max_{s \in [-\pi, \pi]} \Re (i \rho^{\C}(t,s))$$
where $\rho^{\C}$ is the complexified distance function between $H$ and $\partial \Omega$ (see (\ref{complexified phase}) and (\ref{complexified phase2})). In Lemma \ref{weightlemma} we compute the asymptotics of $S(t)$ for $\Im t \in [\eo-\delta, \eo],$ with $\eo >0$ small and $0 < \delta < \eo.$ We show that in such thin strips in the upper half-plane,
\begin{equation} \label{weight asymptotics}
S(t) = \Im t + \frac{\kappa^2_H(\Re t)}{6} (\Im t)^3 + O(|\Im t|^5). \end{equation}

\begin{theo} \label{mainthm3}
Let $\Omega$ be a convex bounded planar domain,  $H \subset \mathring{\Omega}$ an interior $C^{\omega}$ strictly convex closed curve with curvature $\kappa_H>0$ and $(\phi_{h_{j_k}})_{k=1}^{\infty}$  a QE sequence of interior eigenfunctions.  Then, for $\eo >0$ sufficiently small and any $ 0< \delta < \eo$ there there exist constants $C_1=C_1(\eo,\delta)>0$ and $C_2 = C_2(\eo)>0,$ such that for $ h \in (0,h_0(\eo)],$
$$ C_1 e^{m_H(\eo- \delta)/h} \leq \max_{z \in H^{\C}_{\eo}} | (\gamma_H \phi_h)^{\C}(z)| \leq  C_2 h^{- \frac{1}{2} } e^{M_H(\eo)/h}.$$
Here, $m_H(\eo - \delta):= \min_{t \in [-\pi,\pi] \times [\eo-\delta,\eo]} S(t)$ and $M_H(\eo) = \max_{t \in [-\pi,\pi] \times [0,\eo]} S(t).$ Moreover, it follows from (\ref{weight asymptotics})  that  $M_H(\eo) = \eo + O(\eo^3)$ and $m_H(\eo-\delta) = \eo-\delta + O( |\eo-\delta|^3).$ 

\end{theo}

\medskip
The lower bounds in Theorem \ref{mainthm3} are one of the main results  of this paper and use Proposition \ref{mainthm2prop} in a crucial way. To our knowledge, even the (much simpler) upper bounds for $|u_h^{H,\C}|$ are new for domains with boundary. General results for growth of   $\phi_h^{\C}$  for $C^{\omega}$ manifolds without boundary are proved in \cite{Z2}.

  Our nodal intersection bounds are consistent with S.T. Yau's famous conjecture on the Hausdorff measure of nodal sets \cite{BG,Do,DF,DF2,H,HL,HHL,HS,L,Y1,Y2} which asserts that for all smooth $(M,g)$ there are constants $c_1,C_1>0$ such that  $c_1 \lambda \leq | N_{\phi_\lambda}| \leq C_1 \lambda$, where $|\cdot|$ denotes Hausdorff measure. There has been important  recent progress on polynomial lower bounds in Yau's conjecture using several methods (see \cite{CM}, \cite{He}, \cite{Man}, \cite{SZ}). Contrary to the lower bounds on nodal length, there are no general nontrivial lower bounds for the intersection count studied here which is easily seen  by considering the disc (see also  \cite{JN,NJT,NS} and related results on sparsity of nodal domains \cite{Lew}). In analogy with the case of nodal domains, it is of interest to determine whether non-trivial (ie. polynomial in $\lambda$) lower bounds exist for nodal intersections under  appropriate dynamical assumptions (such as ergodicity) on the billiard  dynamics.  Recently, in \cite{GRS}, Ghosh, Reznikov and Sarnak have established such polynomial lower bounds in the case of arithmetic surfaces. We hope to return to this question elsewhere.

Throughtout the paper $C>0$ will denote a positive constant that can vary from line to line.

\subsection{Outline of the proof of Theorem 1.1}

We now describe the main ideas in the  proof of Theorem \ref{mainthm1} suppressing for the moment some of the technicalities. 
Let $q:[-\pi,\pi] \rightarrow H$ be a $C^{\omega}$-parametrization of the curve $H$  with $|q'(t)| \neq 0$ for all $t \in [-\pi,\pi]$ and let $r:[-\pi,\pi] \to \partial \Omega$ be the arclength parametrization of the boundary. We denote the respective eigenfunction restrictions (on the parameter domain) by $u_h^{H}(t) = \phi_h^H(q(t))$ and $u_h^{\partial \Omega}(s) = \phi_h^{\partial \Omega}(r(s)).$  As in \cite{TZ}, given the eigenfunction
restriction, $u_{h}^{H}(t) = \phi_h^H(q(t)), \, t \in [-\pi,\pi]$ the
first step is to complexify $u_{h}^{H}$ to a holomorphic function
$u_{h}^{H,\C}(t)$ with $t \in C_{\eo}$ where $C_{\eo}$ is a simply-connected
domain with $C^{\omega}$ boundary $\partial C_{\eo}$  containing the rectangle $S_{\eo,\pi}$ in the parameter space. The image of $S_{\eo,\pi}$ under the complexified parametrization of $H$ is the complex Grauert tube $H_{\eo}^{\C}$; that is, $H_{\eo}^{\C} := q^{\C}(S_{\eo,\pi}).$ The reason for
introducing the intermediate domain of holomorphy $C_{\eo,}$ is somewhat
technical and has to do with the frequency function approach to nodal
estimates, which is adapted to counting complex zeros in discs (see Lemma \ref{bdy lemma}).
Let $n(h, C_{\eo})$ denotes the number of complex zeros of $u_{h}^{H,\C}$ in the simply connected domain $C_{\eo}$.
The key frequency function estimate (see Proposition \ref{strip
bound})  gives the upper bound

\begin{equation} \label{outline1}
n(h,H) \leq n(h, C_{\eo}) \leq C_1 \left( \frac{ \|
\partial_{T} u_{h}^{H,\C} \|_{L^2_{\eo}} }{  \| u_{h}^{H,\C}
\|_{L^2_{\eo}}  } \right).
\end{equation}
Here, we write $L^2_{\eo}$ for $L^2(\partial C_{\eo},d\sigma(t))$ and $\partial_{T}$ is the unit tangential derivative along $\partial C_{\eo}.$ A key step in the proof of Theorem \ref{mainthm1} is to $h$-microlocally decompose the right hand side
in (\ref{outline1}). Let $\chi_R \in  C^{\infty}_{0}(T^*\partial C_{\eo} )$
with $\chi_R(\sigma) = 1$ for $|\sigma | \leq R+1$ and
$\chi_R(\sigma) = 0$ for $|\sigma | \geq R + 2$ with $R>0$
arbitrary.
Clearly,
\begin{equation} \label{outline2}
  \| \partial_{T} u_{h}^{H,\C} \|_{L^2_{\eo}} \leq \| \partial_{T} Op_h(\chi_R) u_{h}^{H,\C} \|_{L^2_{\eo}}   +           \| \partial_{T} (1- Op_h(\chi_R)) u_{h}^{H,\C} \|_{L^2_{\eo}}.
\end{equation}
For the first term on the right hand side of (\ref{outline2}), since $  h
\partial_T Op_h(\chi_R) \in Op_h(S^{0,0}(T^*\partial C_{\eo})),$ we have by
$L^2$-boundedness that
\begin{equation} \label{outline3}
\frac{ \| \partial_{T} Op_h(\chi_R) u_{h}^{H,\C} \|_{L^2_{\eo}}}{\| u_{h}^{H,\C} \|_{L^2_{\eo}}}  =  h^{-1} \frac{\| h \partial_{T} Op_h(\chi_R) u_{h}^{H,\C} \|_{L^2_{\eo}} }{\| u_{h}^{H,\C} \|_{L^2_{\eo}}} \leq C_{2} h^{-1} . \end{equation}
 As for the second term on the right hand side of (\ref{outline2}), by using potential layer formulas and the Cauchy-Schwarz inequality combined with a complex contour deformation argument (see Proposition \ref{decay estimate}), we show that
\begin{equation} \label{outline4}
 \|h\partial_{T} (1-Op_h(\chi_R)) u_{h}^{H,\C} \|_{L^2_{\eo}}  =  {\mathcal O}(e^{-C_R/h})\cdot \|u_h^{\partial\Omega}\|_{L^2}. \end{equation}
 Here, $C_R \gtrapprox R$ as $R \to \infty$ and  $L^{2}_{0} = L^2 ([-\pi,\pi], dt),$  so the term on the right hand side of (\ref{outline4}) involves the $L^2$-integral of the restriction of $\phi_h$ to the domain boundary $\partial \Omega.$\\
 Since $\|u_h^{\partial\Omega}\|_{L^2} = O(h^{-\alpha})$ for some $\alpha >0$, it follows that
 \begin{equation} \label{outline5} \|h\partial_{T} (1-Op_h(\chi_R)) u_{h}^{H,\C} \|_{L^2_{\eo}}  =  {\mathcal O}(e^{-C_R/h}).
 \end{equation}
From the Cauchy integral formula, Cauchy-Schwarz and the goodness condition $(\ref{revgood})$ we get
\begin{equation}
\label{outline6}\| u_{h}^{H,\C} \|_{L_{\eo}^2} \geq C \cdot \sup_{t \in S_{\eo,\pi}} | u_{h}^{H,\C}(t)| \gtrapprox e^{-C_0/h}.
\end{equation}
 From (\ref{outline6})  and (\ref{outline5}),
\begin{equation} \label{outline7}
 \frac{\| h\partial_{T} (1-Op_h(\chi_R) u_{h}^{H,\C} \|_{L^2_{\eo}}}{\| u_{h}^{H,\C} \|_{L_{\eo}^2}}  =  {\mathcal O}(e^{(-C_R + C_0) /h}) . \end{equation}
By choosing $R $ sufficiently large in the radial frequency cutoff $\chi_R$, we get that $C_R - C_0  \gtrapprox R  > 0$ and so, substitution of the estimates (\ref{outline2}), (\ref{outline3}) and (\ref{outline7}) in (\ref{outline1})
   completes the proof of Theorem \ref{mainthm1}. \qed

\subsection{Outline of the proof of Theorem 1.3}  \label{mainthm2proof}
 Let $H^{\C}_{\eo}$ be the complex Grauert tube of radius $\eo >0$ with totally-real part $H$ and let $\delta >0$ be an arbitrarily small constant. Choose $\zeta_{\eo} \in C^{\infty}_0(H^{\C}_{\eo}; [0,1])$ to be a cutoff in the Grauert tube equal to $1$ on  $H^{\C}_{\eo-2\delta} - H^{\C}_{\eo-3\delta}$   and vanishing outside $H^{\C}_{\eo-\delta} - H^{\C}_{\eo-4\delta}$. Let $\chi_{\eo,\delta}(t):= \zeta_{\eo,\delta}(q^{\C}(t))$ be the corresponding cutoff in the parameter domain.

  Ignoring technicalities arising from corner points on the boundary,  the main technical part of the proof of Theorem \ref{mainthm3} (see Proposition \ref{mainthm2prop} and Corollary \ref{propcor}) consists of  showing that under the non-vanishing curvature condition on $H$ and for $\eo>0$ small,  there is an order-zero semiclassical pseudodifferential operator $P(h) \in Op_h(C^{\infty}_0(B^*\partial \Omega))$ such that \begin{equation} \label{mainthm2.1}
h^{-1/2} \int \int_{\C/2\pi\Z} e^{-2 S(t)/h} | u_{h}^{H,\C}(t)|^2 \, \chi_{\eo,\delta}(t) \, dt d\overline{t} \sim_{h \rightarrow 0^+} \langle P(h) \phi_{h}^{\partial \Omega}, \phi_h^{\partial \Omega} \rangle. \end{equation}
Moreover, the principal symbol $\sigma(P(h))$ satisfies
\begin{equation} \label{mainthm2.2}
\int_{B^*\partial \Omega} \sigma(P(h)) \gamma^{-1} \, dy d\eta \geq C_{H,\Omega,\eo,\delta} >0\end{equation}
where $\gamma(y,\eta) = \sqrt{1-|\eta|^2}.$

Given  a quantum ergodic sequence $(\phi_{h_{j_k}})_{k=1}^{\infty}$, it follows that the boundary restrictions $(\phi_{h_{j_k}}^{\partial \Omega})_{k=1}^{\infty}$ are themselves quantum ergodic \cite{Bu,HZ}  in the sense that
\begin{equation} \label{qer}
 \langle P(h) \phi_h^{\partial \Omega}, \phi_h^{\partial \Omega} \rangle \sim_{h \rightarrow 0^+} \int_{B^*\partial \Omega} \sigma(P(h)) \gamma^{-1} \, dy d\eta. \end{equation}
 It then follows from (\ref{mainthm2.1}), (\ref{mainthm2.2}) and (\ref{qer}) that
\begin{equation} \label{mainthm2.3}
h^{-1/2} \int \int_{\C/2\pi\Z} e^{-2 S(t)/h} | u_{h}^{H,\C}(t)|^2 \, \chi_{\eo,\delta}(t) \, dt d\overline{t} \sim_{h \rightarrow 0^+} \int_{B^*\partial \Omega} \sigma(P(h)) \gamma^{-1} \, dy d\eta = C_{\Omega,H,\eo,\delta} >0.\end{equation}

To get the lower bounds for the Grauert tube maxima, we use the elementary  inequality
\begin{align} \label{easy}
h^{-1/2} \max_{t \in S_{\eo,\pi}} |u_h^{H,\C}(t)|^2 \times  \int \int_{\C/2\pi\Z} e^{-2S(t)/h} \chi_{\eo,\delta}(t) dt d\overline{t} & \nonumber \\
 \geq  h^{-1/2} \int \int_{\C/2\pi\Z} e^{-2 S(t)/h} | u_{h}^{H,\C}(t)|^2 \, \chi_{\eo,\delta}(t) \, dt d\overline{t}.& \end{align}
In view of (\ref{mainthm2.3}) and the formula $S(t) = \Im t + \frac{\kappa^2_H(\Re t)}{6} (\Im t)^3 + O(|\Im t|^5)$ in Lemma \ref{weightlemma}, the exponential lower bound for the Grauert tube maximum of $|u_h^{H,\C}|$ follows from (\ref{easy}). 

The upper bound follows by using the complexified potential layer formula 
$$u_{h}^{H,\C}(t) = \int_{\partial \Omega} N^{\C}(q^{\C}(t),r(s);h) u_{h}^{\partial \Omega}(r(s)) dr(s)$$ combined with Cauchy-Schwarz and the a priori bound $\| u_{h}^{\partial \Omega} \|_{L^2(\partial \Omega)} = O(1),$ which is a direct consequence of the QER property of the eigenfunction boundary traces. Further details are given in section \ref{eigenfnestimates}.
 \qed

\subsection{Outline of proof of Theorem \ref{mainthm2}} Theorem \ref{mainthm2} follows from Theorems \ref{mainthm3} and \ref{mainthm1} since the goodness condition (\ref{revgood}) for $H$ follows trivially from the lower bound in Theorem \ref{mainthm3}.  \qed

\medskip

\begin{rem}We note that the lower bound in Theorem \ref{mainthm3} gives exponential {\em growth}  for the Grauert tube maximum
$\max_{t \in S_{\eo,\pi}} |u_{h}^{H,\C}(t)|$ consistent with the upper bound. Therefore, it is much stronger than the goodness condition (\ref{revgood}) which only requires that Grauert tube maximum not {\em decay} faster than $e^{-C/h}$ for some $C>0.$ \end{rem}

\subsection{$h$-microlocal characterization of $P(h)$} \label{microlocal}

Although we give  a self-contained proof of Proposition \ref{mainthm2prop}  in section \ref{opanalysis}, it is useful to understand the $h$-microlocal rationale behind the characterization of $P(h)$ in the proposition. To simplify the argument somewhat, we continue to assume here that $\partial \Omega$ is smooth.

Let  $H_{\eo/2,\eo}^{\C} = H_{\eo}^{\C} - H_{\eo/2}^{\C}$ be the complex strip corresponding to $\eo/2 < \Im t < \eo$ in the complexification of $H$ and $\chi \in C^{\infty}_{0}(H_{\eo/2,\eo}^{\C}),$ a cutoff to the complex strip $H_{\eo/2,\eo}^{\C}.$ Given the composite operator $F_{\chi}(h): C^{\infty}(\partial \Omega) \to C^{\infty}_0(H_{\eo}^{\C}),$ with  $$F_{\chi}(h):= h^{-1/4}  e^{-S/h} \chi \gamma_H^{\C} N^{\C}(h),$$ the argument in Proposition \ref{mainthm2prop}  characterizes the operators
\begin{equation} \label{poperator}
P(h) = F_{\chi}(h)^* F_{\chi}(h): C^{\infty}(\partial \Omega) \to C^{\infty}(\partial \Omega) \end{equation}
as $h$-pseudodifferential of order $0.$

 The reason for this can be seen as follows. Under the positive curvature assumption on $H$ we  show in section \ref{normalform} that for $\eo >0$ small, there exists $s^* \in C^{\omega}(H_{\eo/2,\eo}^{\C})$ such that $s \mapsto \frac{\Re [ i \rho^{\C}(t,s)]}{\Im t}$ has a non-degenerate maximum at $s = s^*(t)$. Since the subharmonic weight function 
 $ S(t) = \Re i \rho^{\C}(t,s^*(t)),$ it follows that  the phase of the operator $F_{\chi}(h)$ is of  of the form
 \begin{equation} \label{realpart}
 \phi(t,s) = \Re \rho^{\C}(t,s) + i \beta_1(t,s) \Big( (s-s^*(t) )^2 + O(s-s^*(t))^3  \Big), \end{equation}
 where coefficient $\beta_1(t,s) \sim \kappa_H(Y(s))^2 \Im t$. Here, $s^*(\Re t,0) = Y^{-1}(\Re t)$ where $Y: \partial \Omega \to H$ is the glancing map relative to $H$ (see Definition \ref{grazing}). When H is strictly convex, up to choice of orientation, the map $Y$ is the $C^{\omega}$-diffeomorphism of $\partial \Omega$ with H that assigns to each point $r \in \partial \Omega$ the corresponding point $q \in H$ with the property that the geodesic ray joining these points is tangent to H at the terminal point, $q.$

 As for the real part of the phase $\Re \phi(t,s) = \Re \rho^{\C}(t,s),$ we note that in view of (\ref{realpart}),
  $WF_h '( F_{\chi}(h)) \subset \Lambda$ with 
\begin{align} \label{wavefront}
 \Lambda:&= \{ (s, d_s \rho^{\C}(t,s); t, -d_{t} \rho^{\C}(t,s) - i d_{t} S(t) ), \,\, s = s^*(t), \,\, \frac{\eo}{2} < \Im t < \eo \} \nonumber \\
 & \subset T^*\partial \Omega \times T^*H_{\eo}^{\C}. \end{align}
 One can decompose $\Lambda$ into real and imaginary parts $\Lambda = \Re \Lambda \oplus i \Im \Lambda $ with 
 \begin{align} \label{realwavefront}
 \Re \Lambda &= \{ (s, d_s \Re \rho^{\C}(t,s);  \Re t, -d_{\Re t}  \Re \rho^{\C}(t,s) ), \,\, s = s^*(t),  \,\, \frac{\eo}{2} < \Im t < \eo  \} \nonumber \\
& \,\,\,\, \subset T^*\partial \Omega \times T^*H. \end{align} 
Under the positive curvature condition on $H$, one can show that $(s, \Im t)$ are parametrizing coordinates for $\Re \Lambda.$  For $(x,\xi,y,\eta) \in \Re \Lambda$, the projections $\kappa_1(x,\xi,y,\eta) = (x,\xi)$ and $\kappa_2(x,\xi,y,\eta) = (y,\eta)$ are diffeomorphisms onto their images  and 
  the open real Lagrangian $\Re \Lambda$ is a canonical graph with respect to the symplectic form $\kappa_1^*(ds \wedge d\Im t) \oplus \kappa_2^*(- ds \wedge d\Im t)$ (see subsection \ref{graph}). In particular, $d_s d_{\Im t} \Re \rho^{\C}(t,s) \neq 0$ when $s = s^*(t).$
 As a consequence of this and the analysis of the imaginary part of the phase in (\ref{realpart}),  a key step in the proof of Proposition \ref{mainthm2prop} is the operator decomposition (see Lemma \ref{integralnf1})
 \begin{align} \label{fbi1}
 F_{\chi}(h)^* F_{\chi}(h) = U_{Y}(h)^* T(h)^* \, \chi^2  \,T(h) U_{Y}(h). \end{align}
 Here, $U_{Y}(h): C^{\infty}(\partial \Omega) \to C^{\infty}(H)$ is an $h$-Fourier integral operator quantizing the glancing diffeomorphism $Y: \partial \Omega \to H$. After an appropriate choice of complex variables $\tau_1 + i \tau_2 \in H_{\eo/2,\eo}^{\C}$ and identifying $H_{\eo/2,\eo}^{\C}$ with a subset of $T^*H$ via map $\tau_1 + i \tau_2 \to (\tau_1,\tau_2),$ the operators $T(h): C^{\infty}(H) \to C^{\infty}(T^*H)$ can be written in the form
 \begin{align} \label{fbi2}
 T(h)g (\tau_1,\tau_2) = (2\pi h)^{-3/4} \int_{H} e^{ [ i (\tau_1-u) \tau_{2} -  \tilde{\beta}(u,\tau_1,\tau_2)  | \tau_1 - u |^2 ]/h}  c(u,\tau_1,\tau_2;h) \, g(u) \, d\sigma(u)
 \end{align}
 where $c \sim \sum_{k=0}^{\infty} c_k h^k$ and $\tilde{\beta}(u,\tau_1,\tau_2) >0$ when $(u,\tau_1,\tau_2) \in \text{supp} \, c.$ Here,  supp $c(u,\cdot, \cdot;h) \subset \{ (\tau_1,\tau_2);  \frac{\eo}{2} < \tau_2 < \eo \}.$ Consequently, $T(h)$ is, $h$-microlocally on the image of $H_{\eo/2,\eo}^{\C},$  an FBI-transform in the sense of \cite{WZ} of order  zero. Indeed, the extra multiplicative factor of $h^{-1/4}$ is included in the definition of $F_{\chi}(h)$ to ensure that $T(h)$ is of order zero.   The identity (\ref{fbi1}) follows from the analysis in section \ref{opanalysis} (see, in particular, Lemma \ref{integralnf1}).
 
 Given (\ref{fbi1}), it follows by the  $h$-Egorov theorem for FBI-transforms (\cite{Zw} Theorem 13.12) that
 $$ T(h)^* \chi^2 T(h) = Q(h)$$
 where $Q(h): C^{\infty}(H) \to C^{\infty}(H)$ is an $h$-pseudodifferential operator on $H$ of order $0.$ Finally, conjugation with the $h$-Fourier integral operator $U_Y(h)$ quantizing the glancing map and another application of the usual $h$-Egorov theorem for $h$-pseudodifferential operators gives $P(h)$ in (\ref{poperator}) as an $h$-pseudodifferntial operator of order $0$ acting on the boundary $\partial \Omega.$ This is essentially the content of Proposition \ref{mainthm2prop} along with the explicit computation of the principal symbol of $P(h).$

\subsection{QER for Cauchy data along $H$ and growth of $u_h^{H,\C}$}

In (\ref{qer}) we have used that for Dirichlet, interior QUE for domains implies QE for the boundary traces $\phi_h^{\partial \Omega}.$ This follows from Burq's proof of boundary quantum ergodicity \cite{Bu} using the Rellich commutator argument (see also \cite{HZ} for a different proof) .  In the  Neumann case, the same is true as long as one uses test operators with symbols supported away from the tangential set to the boundary; in particular, our test operator $P(h)$ in (\ref{mainthm2.1}) has this property.  Neither statement is  necessarily  correct for the eigenfunction restrictions to a general interior  curve $H$ \cite{TZ2}.  An important point in this paper is that the nodal intersection count for an interior $H$ is linked to QER for the boundary values of eigenfunctions $\phi_h^{\partial \Omega},$ not  the QER problem for $H$ (however, see below).   Indeed, the identity (\ref{mainthm2.1}) directly links a weighted $L^2$-integral of  the holomorphic eigenfunction continuations over $H$  to boundary QER. That part of the argument is somewhat technical and uses the curvature assumption on $H$ (see sections \ref{guts} and \ref{opanalysis}).

Despite the fact that the growth of the holomorphic continuations $u_h^{H,\C}$ and consequently, the nodal intersection count, need not be directly linked to the QER problem for {\em Dirichlet} data consisting of eigenfunction restrictions to $H$, it is worthwhile to point out that it is directly related to the QER problem for {\em Cauchy} data along $H.$  Let
$$CD_H(h):= (\phi_{h}^{H}, \phi_{h}^{H,\nu})$$
with $\phi_h^H = \phi_h |_H $ and $\phi_{h}^{H,\nu} = h \partial_{\nu_H} \phi_h |_H,$ where we continue to assume that $(\phi_h)$ is QE sequence for the domain $\Omega.$ Then, the interior curve $H$ and $\partial \Omega$ bound a subdomain $\Omega_H \subset \Omega$ and one can write the  boundary restriction $\phi_h^{\partial \Omega}$ directly in terms of the Cauchy data along $H.$  Indeed, Greens formula gives
\begin{align} \label{cd1}
\phi_{h}^{\partial \Omega}(r(s)) = \int_{H} \partial_{\nu_H(q(t)} G_0(r(s),q(t),h) \phi_h^H(q(t)) dq(t) - \int_{H} h^{-1}G_0(r(s),q(t),h) \phi_h^{H,\nu}(q(t)) dq(t). \end{align}
It follows that 
\begin{align} \label{cd2}
\langle P(h) \phi_h^{\partial \Omega}, \phi_h^{\partial \Omega} \rangle = \langle {\bf Q}_H(h) \, CD_H(h), CD_H(h) \rangle \end{align}
where ${\bf Q}_H(h)$ is a $2\times 2$-matrix of $h$-pseudodifferential operators acting on $H$ and we write $CD_H(h)$ as a column vector. Consider the operators $T(h): C^{\infty}(H) \to C^{\infty}(\partial \Omega)$ and $G(h):C^{\infty}(H) \to C^{\infty}(\partial \Omega)$ with Schwartz kernels $T(r(s),q(t);h)= \partial_{\nu_H} G_0(r(s),q(t);h)$ and $G(r(s),q(t);h) =  -h^{-1} G_0(r(s),q(t);h)$ respectively. Here $G_0(x,y,h)$ is the free Greens kernel in (\ref{Hankel}) and  both $T(h)$ and $S(h)$ are $h$-Fourier integral operators with standard WKB-expansions. The latter follows from (\ref{Hankel}) since $H$ is interior to $\Omega$ and so dist$(H,\partial \Omega) >0.$ The entries of ${\bf Q}_{H}(h)$ are the operators $Q_{11}(h) = T(h)^*P(h) T(h), \, Q_{22}(h) = G(h)^* P(h) G(h),\, Q_{12}(h) = T(h)^* P(h) G(h)$ and $Q_{21}(h) = Q_{12}(h)^*.$ By the $h$-Egorov theorem, $Q_{ij}(h): C^{\infty}(H) \to C^{\infty}(H)$ are $h$-pseudodifferential on $H$ of order $0$ and the respective symbols can be computed in terms of the symbol $\sigma(P(h))$ in Proposition \ref{mainthm2prop} and the transfer map between $\partial \Omega$ and $H$ (see \cite{TZ2} section 3). One can then restate Proposition \ref{mainthm2prop} in the form
\begin{equation}\label{cdform}
h^{-1/2} \int \int_{\C/ 2\pi \Z} e^{-2S(t)/h} | u_h^{H,\C}(t)|^2 \chi_{\eo,\delta}(t) dt d\overline{t} \sim_{h \to 0^+} \langle {\bf Q}_H(h) \, CD_H(h), CD_H(h) \rangle.\end{equation}
The formula (\ref{cdform}) relates the growth of the holomorphic continuations $u_h^{H,\C}$ to QER for the eigenfunction Cauchy data along $H.$

\begin{rem} Recently, Zelditch \cite{Z} has obtained detailed results on the asymptotic distribution of complex zeros of $\phi_h^{H,\C}$ in the ergodic case when $H$ is a geodesic. Although we do not pursue this here, the identity in (\ref{mainthm2.3}) can be used to derive asymptotic distribution results for complex zeros of $\phi_h^{H,\C}$ in the case where $H$ has strictly positive geodesic curvature, but only in an annular subdomain of $H_{\eo}^{\C}$ away from the real curve $H$ (ie. on the support of the cutoff $\chi_{\eo}$). At the moment, we do not know what the asymptotic distribution of the zeros of $\phi_h^{H,\C}$ looks like in the entire Grauert tube $H_{\eo}^{\C}$ when $H$ is geodesically curved. We hope to return to this problem elsewhere. \end{rem}

\begin{rem} The convexity assumption on  $\partial \Omega$ in Theorems \ref{mainthm2} and \ref{mainthm3} can be relaxed somewhat. Although we do not pursue it here, it is not hard to show using the methods of this paper that our results extend to the case  where  the glancing map $Y: \partial \Omega \to H$ is a  diffeomorphism, $H$ is $C^{\omega}$ strictly convex and $(\phi_h)$ is a QE sequence.\end{rem}

We thank Peter Sarnak and Steve Zelditch for helpful comments regarding an earlier version of the manuscript.

\section{Analytic continuation of eigenfunctions and domains}

\subsection{Complexification of domains $\Omega$ and their boundaries $\partial \Omega$} \label{parametrization} We adopt notation that is similar to that of Garabedian \cite{G} and Millar \cite{M1,M2} (see also \cite{TZ}) and
 denote points in $\R^2$ by $(x,y)$ and  complex coordinates in $\C^2$ by $(z,w).$ It is also important to single out the independent complex coordinates $\zeta = z + iw,  \zeta^* = z - iw.$
 When $H \subset \Omega$ and $\partial \Omega$ are real analytic curves, their complexifications are the images of analytic continuations of real analytic parameterizations.
There are two natural parameter spaces and, as in \cite{TZ}, we
freely work with both throughout. 
We define the parameter strip of width $2\epsilon_0$  to be
  $$S_{\eo} = \{t \in  \C : t = \Re t+ i \Im t, \Re t\in \R , \Im t \in [-\eo, \eo ] \}.$$ 
  The corresponding fundamental rectangular domain is
  $$S_{\eo,\pi} = \{t \in  \C : t = \Re t+ i \Im t,  \Re t \in [-\pi,\pi] , \Im t \in [-\eo, \eo ] \}.$$  For $\eo >0$ small, the associated conformal map of $S_{\eo,\pi}$ onto $H_{\eo}^{\C}$ is
  $$\qc: S_{\eo,\pi} \longrightarrow H_{\eo}^{\C}$$
  $$\qc(t) = (q_1^{\C}(t) , q_2^{\C}(t)).$$ Without loss of generality, we assume that $H$ is a closed curve with $
 |q'(t)| \neq 0$ for all $t \in [-\pi,\pi].$
  In addition, we assume throughout that the real-analytic parametrization $q:[-\pi,\pi] \rightarrow H$ with $q(t + 2\pi) = q(t)$  extends to a conformal map $q^{\C}: S_{2\eo, 2\pi} \rightarrow H_{2\eo}^{\C}$ with $q^{\C}(t+2\pi) = q^{\C}(t).$
  One can also naturally parameterize $H_{\eo}^{\C}$ using functions on annular domains in $\C$ of the form
  $$A_{\eo}:= \{ z \in \C; e^{-\eo} \leq |z| \leq e^{\eo} \}.$$
  In terms of  the conformal map
  $$ z: S_{\eo,\pi} \longrightarrow A_{\eo}, \,\,
   z(t) = e^{it},$$
 given  any $2 \pi$-periodic holomorphic function $f \in O(S_{\eo,\pi})$ there is a unique holomorphic $F \in O(A_{\eo})$ with
  $$ f(t) = F(z(t)) = F(e^{it}).$$
 The conformal parametrizing map $\qc: S_{\eo,\pi} \rightarrow H_{\eo}^{\C}$ induces a conformal parametrizing map $Q^{\C}: A_{\eo} \rightarrow H_{\eo}^{\C}$ with
 $\qc(t) = Q^{\C}(e^{it})$. We use the two maps interchangeably throughout. Generally, upper case letters denote parametrization maps from the annulus $A_{\eo}$ and lower case ones denote maps from the rectangle $S_{\eo,\pi}.$
 In view of the potential layer formulas and the boundary conditions, the boundary curve $\partial \Omega$ has special significance. Without loss of generality, we let $r:[-\pi,\pi] \to \partial \Omega$  be the real analytic arclength  parametrization of the boundary with $r(t+2\pi) = r(t)$ and  $|r'(t)| = 1$ for all $t \in [-\pi,\pi].$  The corresponding holomorphic continuation is
 $r^{\C}: S_{\eo,\pi} \longrightarrow \partial \Omega_{\eo}^{\C}$
 with $r^{\C}(t) = R^{\C}(z(t)).$

In addition, we let $C_{\eo}$ be a simply-connected domain bounded by a closed real-analytic curve $\partial C_{\eo} $  with

 \begin{equation} \label{oval} \begin{array}{ll}
  [-\pi, \pi ] \subseteqq S_{\eo,\pi} \subseteqq C_{\eo} \subseteqq S_{2\eo,2\pi}, \end{array} \end{equation}
 and
$$ \min_{z \in \partial C_{\eo} \cap \R} | z - [-\pi,\pi] | \geq \frac{\pi}{2} \,\, \text{and} \,\, \max_{z \in \partial C_{\eo}} |\Im z| \leq \frac{7\eo}{4}.$$
 The interval $[-\pi,\pi]$ is just the totally real slice of the complex parameter rectangle $S_{\eo,\pi}$ which is contained in $C_{\eo}$. By possibly shrinking $\eo >0$ we assume from now on that the eigenfunction restrictions extend to $2\pi$-real periodic holomorphic functions $u_{h}^{H,\C}$ on the larger rectangles $S_{2\eo,2\pi}$.

\subsubsection{Holomorphic continuation of the restricted eigenfunctions} \label{potentials}

Let $ G: H^{-2}(\R^2) \rightarrow L^{2}(\R^2)$ be the fundamental solution of the Helmholtz equation in $\R^2$ with Schwartz kernel  $$G(x,y,x',y',h) = \frac{i}{4} \text{Ha}_{0}^{(1)}(h^{-1} | (x,y) - (x',y') |), $$ where
\begin{equation} \label{Hankel}
\text{Ha}_{\nu}^{(1)}(z) = c_{\nu}\frac{e^{iz}}{\sqrt{z}}\int\limits_0^\infty  {\frac{{e^{ - s} }}{{\sqrt s }}} (1 -\frac{s}{{2iz}})^{\nu  - \frac{1}{2}} ds, \,\,\,\, \Re z >0. \end{equation}
An application of Green's theorem yields the following potential layer
formula for the Neumann eigenfunctions
\begin{equation} \label{green}
\varphi_{h}(x,y) = \int\limits_{\partial\Omega}
\partial_{\nu_{s}}G(x,y;r(s),h)  \, \varphi_{h}(r(s))d\sigma(s),\end{equation}
where $(x,y) \in \mathring{\Omega}$ and
 $\nu_{s} \in S_{\partial \Omega}(\Omega)$ is the unit
external normal to the boundary at $r(s)\in \partial \Omega.$
We denote the kernel of the potential layer operator in (\ref{green}) by
\begin{equation} \label{N kernel} N(x,y;r(s),h) := \partial_{\nu_{s}}G(x,y;r(s),h) = - h^{-1} \text{Ha}_{1}^{(1)}(h^{-1} | (x,y)- r(s) |) \cos \theta( (x,y),r(s) ) \end{equation}
where
$$ \cos \theta( (x,y),r(s) ) = \left\langle \frac{ (x,y)-r(s) }{ |(x,y)-r(s)|},\nu_s \right\rangle$$
and the corresponding operator by $N(h):C^{\infty}(\partial \Omega) \rightarrow C^{\infty}(\mathring{\Omega}).$

To understand holomorphic continuation of eigenfunctions, one starts with the singularity decomposition of the kernel
$G(x,y;r(s),h)$. It is well-known that
\begin{equation} \label{pot1}
 G(x,y;r(s),h) = A(h^{-1}| (x,y) -r(s)|) \log \left( \frac{1}{| (x,y) -r(s)|} \right) + B(h^{-1}| (x,y) -r(s)| ) \end{equation} where
$A(z)$ and $B(z)$ are entire functions of $z^2 \in \C$ and each of them have
elementary expressions in terms of Bessel functions (see \cite{TZ} appendix
A). $A(z)$ is  the Riemann function \cite{G}.

\noindent We identify $(x,y) \in \mathbb{R}^2$ with $x + i y \in \mathbb{C}$, and introduce the notation  $\rho(x+iy,r(t)) = \sqrt{(x+iy-r(t))\cdot (x-iy- \overline{r(t)})}$ where $z \mapsto \sqrt{z}$ is the positive  square-root function with $\sqrt{\Re z} >0$ when $\Re  z >0.$
Substitution of (\ref{pot1}) in (\ref{green}) implies that for $(x,y) \in \mathring{\Omega}$ and with $\partial_{\nu}:= \partial_{\nu_s},$
\begin{align} \label{pot2}
\phi_h(x,y) =& -\frac{1}{2}\int_{\partial\Omega} \phi_h(r(s)) \partial_{\nu}A(h^{-1} \rho) \log  (\rho^2)  \, dr(s) \notag \\
        & - \frac{1}{2} \int_{\partial\Omega}  \phi_h(r(s))A( h^{-1} \rho) \partial_{\nu}\log(\rho^2) \, dr(s) + \int_{\partial\Omega}  \partial_{\nu}B(h^{-1} \rho) \phi_h(r(s)) \, dr(s).
\end{align}
The holomorphic continuation of the third integral is the easiest to describe  since there is a real analytic $F \in C^{\omega}(\R,\R)$ with entire extension  $F^{\C} \in O(\C)$ satisfying
\begin{equation} \label{entire}
 \partial_{\nu} B(h^{-1} \rho ) =  \partial_{\nu} \,  F(h^{-2}\rho^2) \end{equation} and the same is true for the normal derivative $\partial_{\nu} A(h^{-1} \rho)$ of the Riemann function.
In view of (\ref{entire}), the last integral in (\ref{pot2}) has a biholomorphic  extension to $\Omega^{\C} := \{ (z,w) \in \C^2; \Re z + i \Re w \in \Omega \}.$

In contrast, the first two integrals both turn out to have fairly subtle analytic continuations over $\Omega$ in $\C^2$ that  rely heavily on analytic continuation of the eigenfunction boundary traces (\cite{TZ} Appendix 9). However, we need only consider holomorphic continuation over a strictly interior curve $H \subset \mathring{\Omega}$ here.
Thus, to describe the holomorphic continuation of the first integral on the right hand side of (\ref{pot2}) it suffices to assume  that $x+iy \in \mathring{\Omega}$ is far from the boundary with    $ |x+iy-r |^2 > 4\eo^2 >0,$ where
$ \eo < \text{dist} (H,\partial \Omega).$ When $\max( |\Im w|, |\Im z|) < \eo,$ it follows by Taylor expansion that
$$ | \rho^2(z+iw,r) - |\Re z +i \Re w- r|^2  \,  | \leq \max (|\Im w|, |\Im z|) | | \Re z +i  \Re w - r|.$$
Thus, $\Re \rho^2(z+iw,r) > \eo^2$ and
 $ (s,\Re z, \Re w) \mapsto \log (\rho^2(\Re z,\Re w,s) ) $ has a biholomorphic continuation in the $(\Re z, \Re w)$ variables to
 \begin{equation} \label{contregion}
  [ \Omega - \partial\Omega_{2\eo}]^{\C}(\eo) =\{ (z,w) \in \Omega^{\C}; \min\limits_{r \in \partial \Omega} |\Re z + i \Re w - r| \geq 2 \eo, \, |\Im z| \leq \eo, |\Im w| \leq \eo \}.\end{equation}
 The same is true for $\partial_{\nu} A(\rho)$ and consequently, for the integral.   By the same argument, the funcrtion $(s,\Re z, \Re w) \mapsto \partial_{\nu_s} \rho^2(\Re z, \Re w, s)$ also biholomorphically continues  in the $(\Re z, \Re w)$-variables to $ [ \Omega - \partial\Omega_{2\eo}]^{\C}(\eo).$ Consequently, so does the second integral on the RHS of (\ref{pot2}).

Restriction of the outgoing  variables, $ (x,y)$ to $(q_{1}(s),q_2(s)) \in H$ in (\ref{pot2})
  yields the integral equation
\begin{equation} \label{FONE}
N(h) \phi_{h}^{\partial \Omega} = \phi_{h}^H. \,\, \end{equation}
From now on, we will refer to $\eo >0$ as the {\em modulus of
analyticity}. In light of the potential layer formula (\ref{FONE}) for the Neumann
eigenfunctions, it is useful to compare eigenfunction restrictions
to $\partial \Omega$ with restrictions to $H \subset
\mathring{\Omega}$  and similarly, for the holomorphic
continuations.  
 For the
restrictions of the Neumann eigenfunctions pulled-back to the parameter domain, we continue to write
 \begin{equation} \label{notation1} \begin{array}{ll}
 u_{h}^{\partial \Omega}(t)= \phi_{h}^{\partial \Omega} ( r_1(t), r_2(t)), \,\,\, u_h^H(t) = \phi_h^H (q_1(t), q_2(t)); \,\,\, t \in [-\pi,\pi]\end{array} \end{equation}
with $r(t) = r_1(t) + i r_2(t) \in \partial \Omega$ and $q(t) = q_1(t) + i q_2(t) \in H$.

\begin{prop} \label{N} Suppose that $H \subset \Omega$ is  real analytic and let
$dist (H, \partial \Omega) = \min_{(s,t)\in [-\pi,\pi]^{2}} |q(t)-r(s)|.$
Assume that $q(t)$ has a holomorphic continuation to  $I(\delta) = [-\pi,\pi] \pm  [-\delta, \delta ].$
 Then the restriction $u_{h}^H$  of the Neumann eigenfunctions has a holomorphic continuation $u_{h}^{H,\C}(t)$ to the strip
 $S_{2\eo,2\pi} $ with
 $$2 \eo < \frac{ dist (H, \partial \Omega) }{ \sup_{t \in I_{H}(\delta) } |\partial_t q^{\C}(t)| }.$$ Moreover, in the strip $S_{2\eo,2\pi},$ the continuation is given by complexified potential layer equation
 \begin{equation} \label{cpl}
 N^{\C}(h) \phi_h^{\partial \Omega} =  \phi_h^{H,\C}, \end{equation}
 where $N^{\C}(h)$ is the operator with Schwartz kernel $N^{\C}(q^{\C}(t),r(s),h)$ holomorphically continued in the outgoing $t$-variables, and $\phi_h^{H,\C}$ the holomorphic continuation of $\phi_{h}^{H}$ to $H_{\eo}^{\C}.$
\end{prop}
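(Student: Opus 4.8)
The plan is to verify that the potential layer formula (\ref{pot2}) continues holomorphically in the outgoing variables once those variables are pushed into the complex Grauert tube $H_{\eo}^{\C}$, provided $\eo$ is small enough that the complexified point $q^{\C}(t)$ stays at positive distance from the real boundary curve $\partial\Omega$. First I would set up the quantitative geometry: write $q^{\C}(t) = q(\Re t) + (\text{error depending on }\Im t)$ via Taylor expansion, and observe that for $|\Im t| \le 2\eo$ with $2\eo < \dist(H,\partial\Omega) / \sup_{t\in I_H(\delta)}|\partial_t q^{\C}(t)|$, one gets the uniform lower bound
\begin{equation} \label{distbound}
\Re \rho^2\big(q^{\C}(t), r(s)\big) \; \geq \; c_0 > 0 \qquad \text{for all } s \in [-\pi,\pi],\ |\Im t| \leq 2\eo.
\end{equation}
This is exactly the condition that lets one choose the branch of $\sqrt{\cdot}$ and of $\log(\cdot)$ consistently, so that $\rho^{\C}(q^{\C}(t),r(s))$, $\log \rho^{\C}(q^{\C}(t),r(s))^2$, $\partial_\nu A(h^{-1}\rho^{\C})$, and $\partial_\nu \log(\rho^{\C})^2$ are all holomorphic functions of $t$ in the strip $S_{2\eo,2\pi}$, uniformly in $s$. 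The periodicity $q^{\C}(t+2\pi)=q^{\C}(t)$ (assumed in Section \ref{parametrization}) transfers to $u_h^{H,\C}$.

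Next I would go term by term through (\ref{pot2}). The third integral involving $\partial_\nu B$ is handled by the observation already in the excerpt, (\ref{entire}): $\partial_\nu B(h^{-1}\rho) = \partial_\nu F(h^{-2}\rho^2)$ with $F^{\C}$ entire, so this term extends biholomorphically with no smallness needed. For the first two integrals, the key point is that by the Taylor-expansion estimate recorded before (\ref{contregion}), the integrands $\partial_\nu A(h^{-1}\rho)\log(\rho^2)$ and $A(h^{-1}\rho)\,\partial_\nu\log(\rho^2)$ are, for each fixed $s$, holomorphic in the outgoing point over $[\Omega - \partial\Omega_{2\eo}]^{\C}(\eo)$; since $H_{\eo}^{\C} = q^{\C}(S_{\eo,\pi})$ sits inside that region precisely when $\eo < \dist(H,\partial\Omega)$ up to the derivative factor, pulling back by $q^{\C}$ and integrating in $s$ over the compact contour $\partial\Omega$ (against the bounded boundary trace $\phi_h^{\partial\Omega}$) produces a function holomorphic in $t$ on $S_{2\eo,2\pi}$ — one may differentiate under the integral sign by Morera/dominated convergence using (\ref{distbound}). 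Summing the three pieces gives a holomorphic function on $S_{2\eo,2\pi}$ whose restriction to $[-\pi,\pi]$ agrees with $u_h^H(t)=\phi_h^H(q(t))$ by (\ref{FONE}); uniqueness of holomorphic continuation then identifies it with $u_h^{H,\C}$, and rewriting the three integrals as a single operator acting on $\phi_h^{\partial\Omega}$ yields (\ref{cpl}) with $N^{\C}(h)$ defined by the outgoing-complexified Schwartz kernel.

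The main obstacle is the careful bookkeeping of the branch cuts of $\rho^{\C} = \sqrt{(z+iw-r)(z-iw-\bar r)}$ and of $\log(\rho^{\C})^2$: one must check that the radicand $\Re\rho^2 > 0$ bound (\ref{distbound}) genuinely holds on the full strip $S_{2\eo,2\pi}$ (not merely on $S_{\eo,\pi}$), which is why the statement demands the strict inequality $2\eo < \dist(H,\partial\Omega)/\sup|\partial_t q^{\C}|$ with the extra factor of $2$, and one must confirm that the chosen branch is the one that restricts correctly to the real positive square root when $t$ is real. Once that is in place, everything else is an application of differentiation under the integral sign plus uniqueness of analytic continuation; there is no delicate analysis of the boundary-trace continuation here because $H$ is strictly interior, which is the simplification emphasized in the paragraph preceding the proposition.
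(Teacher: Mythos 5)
Your proposal is correct and follows essentially the same route as the paper: the paper's own proof is a one-paragraph appeal to the preceding analysis of the three terms in (\ref{pot2}), the lower bound on $\Re\rho^2$ over $[\Omega - \partial\Omega_{2\eo}]^{\C}(\eo)$, and uniqueness of analytic continuation, which is exactly what you reconstruct. Your version simply makes explicit the branch-selection and differentiation-under-the-integral steps that the paper leaves implicit, so there is nothing to add.
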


\begin{proof} The proposition follows from the above analytic continuation argument for (\ref{pot2}) and (\ref{FONE}) since  by (\ref{contregion}) the $u_h^H$ holomorphically continue to the set $\{ t \in \C; \min_{r \in \partial \Omega}  |q(\Re t) - r | \geq 2\eo, | \Im q^{\C}(t)| < \eo \}.$  The  formula in (\ref{cpl}) follows from uniqueness of analytic continuation and the fact that, by the  above analysis of  (\ref{pot2}), for $ \zeta = q^{\C}(t) \in H_{2\eo,2\pi},$
$$\phi_{h}^{H,\C}(\zeta) = [ N(h) \phi_h^{\partial \Omega} ]^{\C}(\zeta) = N^{\C}(h) \phi_h^{\partial \Omega}(\zeta).$$ \end{proof}

\section {The frequency function and measure of the nodal set}
We first recall the definition of the  frequency function with an
important application due to F.H. Lin \cite{L} for estimating
measures of nodal sets. We are interested here in the planar case of holomorphic functions. In general, the frequency function
for harmonic functions in arbitrary dimensions is defined as
follows
\begin{defn} \label{ff}
 Let $\Delta u = 0$ with $\Delta = \sum_{j=1}^{n} \partial_{x_j}^2$ the standard Laplacian in ${\mathbb R}^n$. The frequency function of the harmonic function $u$ in the unit ball $ B_{1} \subset \mathbb{R}^n$ is defined to be
$$ F(u) = \frac{{\iint_{B_1 } {|\nabla u|^2 } }}{{\int\limits_{\partial B_1 } {|u|^2 } }}.$$
\end{defn}

When the context is clear, we suppress the dependence of $F$ on
$u$ and just write $F$ for the frequency function. In the planar
case, any non-zero  holomorphic function  $f(z)$ in the disc $B_1 = \{z \in
{\mathbb C}; |z| \leq 1 \},$  has a decomposition  of the form $f
= u + i v$ where $u,v$ are harmonic conjugates and so, since  $ \partial_{z} f =  \partial_{x} u + i \partial_x v,$   in analogy with the harmonic
case in  Definition \ref{ff}, one defines the frequency function
to be
 \begin{equation} \label{ffhol}
 F = \frac{{  \iint_{B_1 } {|\partial_z f(z)|^2 } dz d\bar{z} }}{{\int\limits_{\partial B_1 } {|f(z)|^2 } d\sigma(z) }}. \end{equation}

An elementary but useful example to keep in mind is the monomial $f(z) = z^k = r^k e^{ki\theta}; \, k \in {\mathbb Z}^+.$ In this case, one easily computes the frequency
function to be $   k^2 \int_{0}^1 r^{2k-1} dr = k/2,$ where $k$ is the degree
of the polynomial $z^k.$  By Green's formula, the analogous result
is easily verified for arbitrary homogeneous harmonic polynomials
in any dimension. The following result, proved by Lin \cite{L}
using Taylor expansion, and by Han \cite{H} using Rouche's theorem
is an important generalization of the polynomial case to arbitrary
non-zero holomorphic functions. We recall the result here and
refer the reader to \cite{H}  for a proof (see also the upcoming
book of Han and Lin \cite{HL}). The key result that estimates the
number of complex zeros of $f(z)$ in the disc $B_1$ is given by

\begin{theo}\cite{H,L,HL} \label{t2.2}
Let $f(z)$ be a non-zero analytic function in $B_1= \{z \in
\mathbb{C} : |z|\leq 1 \}.$ Then, for some universal $\delta \in (0,1),$ $$\#
\{f^{-1}(0)\cap B_{\delta}\}\leq 2 F,$$
 where $F$ is defined to be  the ratio in (\ref{ffhol}). \end{theo}


 It is useful here to rewrite the frequency function $F$  in (\ref{ffhol}) exclusively in terms of integrals over the circular disc boundary $\partial B_1.$


\begin{lem} \label{bdy lemma}  Let $f: B_1\rightarrow \C$ be non-zero holomorphic. Then,
 $$ F \leq  \frac{\|\partial_{\theta}   f \|_{L^2(\partial B_1)}}{\|  f \|_{L^2(\partial B_1)}} ,$$ where $\partial_{\theta} = x \partial_y - y \partial_x$ is the unit tangential derivative along the circular boundary $\partial B_1$ of the disc.
\end{lem}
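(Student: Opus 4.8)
The plan is to express the frequency function $F$ from (\ref{ffhol}) using the Laurent (here Taylor, since $f$ is holomorphic on the closed disc) expansion $f(z) = \sum_{k=0}^\infty a_k z^k$, compute each of the two integrals in terms of the coefficients $a_k$, and then compare. First I would write the denominator: on $\partial B_1$ we have $z = e^{i\theta}$, so by orthogonality of $\{e^{ik\theta}\}$ in $L^2(\partial B_1)$,
\[
\| f \|_{L^2(\partial B_1)}^2 = 2\pi \sum_{k=0}^\infty |a_k|^2 .
\]
Next I would compute the numerator of $F$, the area integral $\iint_{B_1} |\partial_z f|^2\, dz\, d\bar z$. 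Since $\partial_z f(z) = \sum_{k=1}^\infty k a_k z^{k-1}$, passing to polar coordinates $z = r e^{i\theta}$ and again using orthogonality in $\theta$ gives
\[
\iint_{B_1} |\partial_z f|^2 = 2\pi \sum_{k=1}^\infty k^2 |a_k|^2 \int_0^1 r^{2k-2}\, r\, dr = 2\pi \sum_{k=1}^\infty k^2 |a_k|^2 \cdot \frac{1}{2k} = \pi \sum_{k=1}^\infty k |a_k|^2 ,
\]
consistent with the monomial computation $f = z^k \rightsquigarrow F = k/2$ mentioned in the text. Hence
\[
F = \frac{\pi \sum_{k\ge 1} k |a_k|^2}{2\pi \sum_{k\ge 0} |a_k|^2} = \frac{\sum_{k\ge 1} k |a_k|^2}{2\sum_{k\ge 0} |a_k|^2}.
\]

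Now I would treat the claimed upper bound. The tangential derivative on $\partial B_1$ is $\partial_\theta$, and on the boundary $\partial_\theta f(e^{i\theta}) = i e^{i\theta} f'(e^{i\theta}) = \sum_{k\ge 1} i k a_k e^{ik\theta}$, so
\[
\|\partial_\theta f\|_{L^2(\partial B_1)}^2 = 2\pi \sum_{k\ge 1} k^2 |a_k|^2 .
\]
Therefore the asserted inequality $F \le \|\partial_\theta f\|_{L^2(\partial B_1)} / \|f\|_{L^2(\partial B_1)}$ is, after squaring and clearing denominators, equivalent to
\[
\Bigl( \sum_{k\ge 1} k |a_k|^2 \Bigr)^2 \;\le\; 4 \Bigl( \sum_{k\ge 1} k^2 |a_k|^2 \Bigr) \Bigl( \sum_{k\ge 0} |a_k|^2 \Bigr).
\]
This follows from Cauchy--Schwarz: writing $k |a_k|^2 = (k |a_k|)\cdot |a_k|$ and summing over $k\ge 1$, one gets $\sum_{k\ge1} k|a_k|^2 \le \bigl(\sum_{k\ge1} k^2|a_k|^2\bigr)^{1/2}\bigl(\sum_{k\ge1}|a_k|^2\bigr)^{1/2} \le \bigl(\sum_{k\ge1} k^2|a_k|^2\bigr)^{1/2}\bigl(\sum_{k\ge0}|a_k|^2\bigr)^{1/2}$, and the factor $4$ is not even needed (it gives extra room). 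Equivalently, one can argue more slickly: $2F \le \|\partial_\theta f\|^2 / \|f\|_{L^2}^2$ already fails to be sharp, but the weaker stated form with the square root follows once we note $\sum k|a_k|^2 \le \bigl(\sum k^2 |a_k|^2\bigr)^{1/2}\bigl(\sum |a_k|^2\bigr)^{1/2}$, hence $F = \frac{\sum k|a_k|^2}{2\sum|a_k|^2} \le \frac{1}{2}\cdot \frac{(\sum k^2|a_k|^2)^{1/2}}{(\sum|a_k|^2)^{1/2}} \le \frac{(\sum k^2|a_k|^2)^{1/2}}{(\sum|a_k|^2)^{1/2}} = \frac{\|\partial_\theta f\|_{L^2(\partial B_1)}}{\|f\|_{L^2(\partial B_1)}}$.

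I do not expect a serious obstacle here; the only points requiring mild care are: (i) justifying term-by-term integration of the series for $\partial_z f$ over $B_1$, which is fine since $f$ is holomorphic on a neighborhood of $\overline{B_1}$ so the Taylor series converges uniformly on $\overline{B_1}$ (and on $\partial B_1$ for the boundary integrals); and (ii) keeping straight the two notions of derivative --- $\partial_z f = f'$ in the interior integral versus $\partial_\theta f = i z f'$ on the boundary --- so that the extra factor of $k$ versus $k^2$ in the coefficient sums is accounted for correctly. Once the coefficient formulas are in place, the inequality is an immediate application of Cauchy--Schwarz (in fact with room to spare), so the content of the lemma is really just the bookkeeping of Parseval identities.
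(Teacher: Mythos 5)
Your proof is correct but takes a genuinely different route from the paper's. The paper decomposes $f = \Re f + i\,\Im f$, observes $|\partial_z f|^2 = |\nabla\Re f|^2$, converts the area integral to a boundary integral by Green's theorem, invokes the polar Cauchy--Riemann identity $\partial_r \Re f|_{\partial B_1} = \partial_\theta \Im f|_{\partial B_1}$, and then applies Cauchy--Schwarz to $\int_{\partial B_1}\Re f\cdot\partial_\theta(\Im f)\,d\theta$. You instead expand $f=\sum a_k z^k$, evaluate both the area and boundary integrals explicitly via Parseval, and apply Cauchy--Schwarz at the level of the coefficient sums. Your formula $F = \sum_{k\ge1} k|a_k|^2 \big/ \big(2\sum_{k\ge0}|a_k|^2\big)$ reproduces the $z^k\mapsto k/2$ example the authors quote and in fact yields the sharper estimate $F \le \frac{1}{2}\,\|\partial_\theta f\|_{L^2(\partial B_1)}/\|f\|_{L^2(\partial B_1)}$, i.e.\ $2F \le \|\partial_\theta f\|/\|f\|$, which combined with Theorem \ref{t2.2} would remove the factor of $2$ from the downstream nodal count. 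That improvement is irrelevant to the ${\mathcal O}(h^{-1})$ conclusion, but it is a genuine sharpening. The trade-off is the expected one: the Green/Cauchy--Riemann argument is coordinate-free and, as the text remarks before Theorem \ref{t2.2}, extends to harmonic functions in all dimensions, whereas the Taylor/Parseval computation is specific to the disc but shorter and gives the better constant. One minor typo in your ``more slickly'' aside: you wrote $2F \le \|\partial_\theta f\|^2/\|f\|^2$ when the right-hand side should not be squared; the displayed chain of inequalities that follows is, however, correct.
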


\begin{proof}
The proof follows from Green's formula and an application of
Cauchy-Schwarz. For $z =x +iy = (x,y) \in B_1$ we write $f(z)
= \Re f(x,y ) + i \Im f(x,y),$ where $ \Re f (x,y ) , \Im f(x,y)$
are real-valued harmonic functions.

Since $f$ is analytic,
 $ \partial_z f = \partial_x \Re f - i \partial_y \Re f,$
and so, $$ |\partial_z f |^{2} =  (\partial_x \Re
f)^{2} + (\partial_y \Re f)^{2} =  |\nabla (\Re
f)|^{2}.$$ An application of Green's theorem implies that
\begin{equation} \label{greenformula}
\begin{array}{lll}
 \iint\limits_{B_1 }|\partial_z f(z) |^{2} dz d\overline{z}  & =  \iint\limits_{B_1 }|\nabla (\Re f)|^{2} \, dx dy \\ \\
& =  \int\limits_{\partial B_1} \Re f  \cdot  \partial_{\nu} (\Re f) \, d\theta -  \iint\limits_{B_1 }\Re f \cdot \Delta (\Re f) dxdy \\ \\
& =  \int\limits_{\partial B_1} \Re f \cdot
\partial_{\nu}(\Re f)\, d\theta,
\end{array}
\end{equation}
where, $\nu$ is the outward pointing unit normal to $\partial B_1$
and the last line follows since $\Delta (\Re f) = 0$ in $B_1.$
\newline

Next, we use the  Cauchy-Riemann equations written in polar
coordinates $(r,\theta) \in \R^+ \times [0,2\pi)$ to rewrite the
normal derivative term on the right hand side of the last line in
(\ref{greenformula}) in terms of a tangential one.
\begin{equation} \label{cr polar}
 \partial_{\nu} \Re f |_{\partial B_1}  = \partial_r \Re f |_{r=1}= \partial_{\theta} \Im f |_{r=1}. \end{equation}

Hence, it follows from (\ref{cr polar}) and  (\ref{greenformula})
that

\begin{equation} \label{upshot1}
\iint\limits_{B_1 }|\partial_z f(z) |^{2} dz d\overline{z}  =\int\limits_{\partial B_1} \Re f \cdot
\partial_{\theta} (\Im f) \, d\theta. \end{equation}

Finally, an application of  Cauchy-Schwarz in (\ref{upshot1})
gives

\begin{equation} \label{cs} \begin{array}{lll}
 \iint\limits_{B_1} | \partial_z f(z) |^{2} dz d\overline{z} & \leq \|\Re f\|_{L^2(\partial B_1)}  \cdot  \|\partial_\theta (\Im f)\|_{L^2(\partial B_1)}  \\ \\
& \leq \| f \|_{L^2(\partial B_1)}  \cdot
\|\partial_\theta f \|_{L^2(\partial B_1)}.
\end{array} \end{equation}

\end{proof}


\subsubsection{Frequency functions for the holomorphic continuations of restricted eigenfunctions}

We wish  to estimate here the intersection number  $n(h,H)$
in terms of  Lemma \ref{bdy lemma}.

\begin{prop} \label{strip bound}
Let $H \subset \mathring{\Omega}$ be a  $C^{\omega}$ interior curve and $C_{\eo}$ be a simply-connected, bounded  domain in $\C$ containing the
rectangle $S_{\eo,\pi}$ with   real-analytic  boundary $\partial C_{\eo}$ and arclength parametrization $t \mapsto \kappa(t) \in \partial C_{\eo}.$   Then, for $\eo >0$ sufficiently small

\[ n(h,H) \leq C_{H,\eo}  \frac{ \| \partial_{T}  u_{h}^{H,\C} \|_{L^2_{\eo} } }{ \| u_{h}^{H,\C} \|_{ L^2_{\eo} } }. \]

Here, $L^{2}_{\eo} := L^2 (\partial C_{\eo}, |dt|)$ and  $\partial_{T}$ denotes the unit tangential derivative along $\partial C_{\eo}$ with $\partial_{T} f(t):= \frac{d}{dt} f(\kappa(t)).$  \end{prop}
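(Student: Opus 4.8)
The plan is to transfer the zero-counting statement from the disc (Theorem \ref{t2.2}, via the reformulation in Lemma \ref{bdy lemma}) to the domain $C_{\eo}$ by a Riemann-mapping conformal change of variables, and then to compare the number of zeros in $C_{\eo}$ with $n(h,H)$. First I would invoke the Riemann mapping theorem to fix a biholomorphism $\Psi: B_1 \to C_{\eo}$ (extending $C^{\omega}$ to the boundary since $\partial C_{\eo}$ is real-analytic — this is the reason the intermediate domain $C_{\eo}$ with analytic boundary was introduced in the first place, as noted after \eqref{oval}). Set $g := u_{h}^{H,\C} \circ \Psi$, a non-zero holomorphic function on $B_1$ (non-vanishing as a function, because the eigenfunction restriction is not identically zero — this uses the goodness hypothesis only indirectly, but at minimum $u_h^{H,\C}\not\equiv 0$). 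Apply Lemma \ref{bdy lemma} to $g$ to get
\[
\#\{g^{-1}(0)\cap B_\delta\} \le 2F(g) \le \frac{2\|\partial_\theta g\|_{L^2(\partial B_1)}}{\|g\|_{L^2(\partial B_1)}}.
\]

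The second step is to push this estimate forward under $\Psi$. Since $\Psi$ is conformal, $\#\{g^{-1}(0)\cap B_\delta\} = \#\{(u_h^{H,\C})^{-1}(0)\cap \Psi(B_\delta)\}$, and one needs $\Psi(B_\delta)$ to still contain the rectangle $S_{\eo,\pi}$ (hence the totally real slice $[-\pi,\pi]$); this can be arranged by choosing $\eo$ small and, if necessary, noting that the universal $\delta$ from Theorem \ref{t2.2} together with a preliminary dilation lets us assume $S_{\eo,\pi}\subset \Psi(B_\delta)\subset C_{\eo}$ — alternatively one re-centers/rescales so that $S_{\eo,\pi}$ sits well inside. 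Because $\mathcal N_{\phi_\lambda}\cap H$ pulls back under $q$ (the real parametrization) to real zeros of $u_h^H$ in $[-\pi,\pi]$, and each such real zero is a zero of the holomorphic continuation $u_h^{H,\C}$ lying in $S_{\eo,\pi}\subset \Psi(B_\delta)$, we get $n(h,H) \le \#\{(u_h^{H,\C})^{-1}(0)\cap \Psi(B_\delta)\}$. (One should remark that distinct intersection points of $H$ with the nodal set give distinct parameter values $t\in[-\pi,\pi]$ since $q$ is injective, so no multiplicity is lost.)

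The third step is to convert the boundary $L^2$-norms over $\partial B_1$ into the norms over $\partial C_{\eo}$ appearing in the statement. Writing $t\mapsto \kappa(t)$ for the arclength parametrization of $\partial C_{\eo}$ and $e^{i\theta}\mapsto \Psi(e^{i\theta})$ for the boundary map, the change of variables introduces the factor $|\Psi'|$ on $\partial B_1$, which is bounded above and below by positive constants depending only on $C_{\eo}$ (equivalently on $\eo$ and $H$) since $\Psi$ extends analytically and $\partial C_{\eo}$ is a compact analytic curve; similarly $\partial_\theta g = (\partial_T u_h^{H,\C})\circ\Psi \cdot |\Psi'|$ up to the same bounded factors. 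Collecting these, $\|\partial_\theta g\|_{L^2(\partial B_1)} \le C_{H,\eo}\|\partial_T u_h^{H,\C}\|_{L^2_{\eo}}$ and $\|g\|_{L^2(\partial B_1)}\ge C_{H,\eo}^{-1}\|u_h^{H,\C}\|_{L^2_{\eo}}$, which combined with the previous two steps yields exactly
\[
n(h,H) \le C_{H,\eo}\,\frac{\|\partial_T u_h^{H,\C}\|_{L^2_{\eo}}}{\|u_h^{H,\C}\|_{L^2_{\eo}}}.
\]

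The main obstacle is the geometric bookkeeping in the second step: ensuring that after the conformal map the sub-disc $B_\delta$ (with $\delta$ the fixed universal constant from Theverse \ref{t2.2}, which we cannot choose) still pulls back to a region containing all of $[-\pi,\pi]$, so that every real nodal intersection is counted. This is handled by first normalizing — rescaling the parameter and shrinking $\eo$ so that $S_{\eo,\pi}$ is a small subset of $C_{\eo}$ deep in the interior, then checking $\Psi^{-1}(S_{\eo,\pi})\subset B_\delta$; the constants absorbed into $C_{H,\eo}$ depend only on this fixed geometric configuration. Everything else (conformality, Lemma \ref{bdy lemma}, boundedness of $|\Psi'|^{\pm1}$) is routine.
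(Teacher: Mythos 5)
Your proposal follows the same route as the paper's proof: Riemann mapping $\Psi\colon B_1\to C_{\eo}$ (with $C^{\omega}$ extension via Schwarz reflection because $\partial C_{\eo}$ is analytic), pull back $u_h^{H,\C}$ to $g:=u_h^{H,\C}\circ\Psi$ on $B_1$, apply Theorem~\ref{t2.2} together with Lemma~\ref{bdy lemma}, and then transport the boundary $L^2$-norms back to $\partial C_{\eo}$ using that $|\Psi'|^{\pm1}$ is bounded. This is exactly what the paper does, so the core of the argument is fine.

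The one point where your argument goes slightly astray is your proposed handling of the universal $\delta$ in Theorem~\ref{t2.2}. You are right that this requires some care, but the fix you give --- ``shrink $\eo$ so that $S_{\eo,\pi}$ is deep in the interior of $C_{\eo}$'' --- goes in the wrong direction. Since $C_{\eo}\subset S_{2\eo,2\pi}$ and $\max_{z\in\partial C_{\eo}}|\Im z|\le 7\eo/4$, the vertical distance from $[-\pi,\pi]$ to $\partial C_{\eo}$ shrinks with $\eo$, so the conformal preimage $\Psi^{-1}([-\pi,\pi])$ actually approaches $\partial B_1$ as $\eo\to 0$ (the aspect ratio of the ``pill'' $C_{\eo}$ blows up). A linear rescaling changes nothing conformally. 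The correct, and simpler, resolution is the one the paper implicitly uses: $[-\pi,\pi]$ is a compact subset of the open set $\mathring{C}_{\eo}$, so $\Psi^{-1}([-\pi,\pi])$ is a compact subset of $\mathring{B}_1$ and is therefore contained in $B_{\delta_0}$ for some $\delta_0=\delta_0(\eo)<1$; one then applies the version of the frequency-function zero-counting estimate valid for any $\delta<1$ at the cost of a $\delta$-dependent constant, which is harmless because the proposition's constant is allowed to depend on $\eo$. This is why ``for $\eo>0$ sufficiently small'' and the $\eo$-dependent constant $C_{H,\eo}$ appear in the statement. With that correction your argument closes.
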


\begin{proof}

Since $C_{\eo}$ is a simply-connected bounded domain,  by the Riemann mapping theorem  there exists a conformal map
 \[ \kappa: \mathring{B_1} \rightarrow C_{\eo} ,\] where $\mathring{B_1} = \{ z; |z| <1\}$.
By Caratheodory, there is $\tilde{\kappa} \in C^{0}( B_1)$  with $\tilde{\kappa}|_{\mathring{B_1}} = \kappa|_{\mathring{B_1}}$ univalent up to the boundary. Moreover,  since $\partial C_{\eo}$ is real-analytic, it follows from the Schwarz reflection principle that
\begin{equation} \label{nice}
\tilde{\kappa} \in C^{\omega}( B_1).
\end{equation}
Analogous results also hold for the inverse conformal map $\kappa^{-1}: C_{\eo} \rightarrow \mathring{B_1}.$
Since $\kappa$ is conformal and  satisfies (\ref{nice}), it follows that the boundary restriction
\[ \tilde{\kappa}|_{\partial B_1}:   \partial B_1 \rightarrow \partial C_{\eo} \]
is a $C^{\omega}$-diffeomorphism.
We define the composite function on $B_1$
\[g_{h}^{H,\C}(z): = u_{h}^{H,\C}(\tilde{\kappa}(z)); \,\,\, z \in B_1. \]
We apply theorem \ref{t2.2} to the holomorphic function $g_{h}^{H,\C}$ in $B_1$. We choose $\delta \in (0,1)$ so that $C_{\delta} := \tilde{\kappa}(B_\delta) \supset [-\pi,\pi]$.
We have that
\begin{equation} \label{real}
n(h,H) = N_{u_{h}}\cap [-\pi,\pi] \leq n^{\C}(h,C_{\delta}) =  \# \{ t \in C_{\delta};   u_{h}^{H,\C}(t) = 0 \} = \# \{ t \in B_{\delta};   g_{h}^{H,\C}(t) = 0 \}.
\end{equation}
It follows by Theorem \ref{t2.2}, Lemma \ref{bdy lemma} and(\ref{real}) that
\begin{equation} \label{ratio}
n(h,H) \leq  2\frac{\|\partial_{\theta}   g_{h}^{H,\C}\|_{L^2(\partial B_1)}}{\|  g_{h}^{H,\C} \|_{L^2(\partial B_1)}}. \end{equation}  An application of the change of variables formula in (\ref{ratio})   with  $t = \tilde{\kappa}(z) $ for $z \in \partial B_1 $ proves the proposition.
\end{proof}

\section{Estimating the frequency function: $h$-microlocal decomposition}

In view of Proposition \ref{strip bound}, we are left with showing
that
\begin{equation} \label{guts}
  \frac{ \| \partial_{T}  u_{h}^{H,\C} \|_{L^2_{\eo} } }{ \| u_{h}^{H,\C} \|_{ L^2_{\eo} }  } = {\mathcal O}_{\Omega,H} (h^{-1}). \end{equation}
  To prove (\ref{guts}), we will need to h-microlocally decompose $ \gamma_{\partial C_{\eo}} u_{h}^{H,\C}$ where $\gamma_{\partial C_{\eo}}:C^{0}(S_{2\eo,2\pi}) \rightarrow  C^{0}(\partial C_{\eo})$  is the restriction map. We briefly digress here to introduce the relevant h-pseudodifferential cutoff operators noting that  $\partial C_{\eo}$ is $C^{\omega}$-diffeomorphic to the unit circle $\partial B_{1}.$

\subsection{Semiclassical pseudodifferential operators on tori}

Let $M^n$ be  compact manifold. The following semiclassical symbol
spaces are standard \cite{EZ} and will suffice for our purposes.

\begin{defn} \label{symbol}
We say that $a \in S^{k,m}_{cl}(T^*M \times [0,h_0))$ if $a \in
C^{\infty}(T^*M;[0,h_0))$ has an asymptotic expansion of the form
$a \sim_{h \rightarrow 0^+} h^{-k}\sum_{j=0}^{\infty} a_{j}(x,\xi)
h^{j}$ where
$$ | \partial_{x}^{\alpha} \partial_{\xi}^{\beta} a_{j}(x,\xi)| \leq C_{\alpha,\beta} (1 + |\xi|)^{m - |\beta|}; \,\, (x,\xi) \in T^*M.$$
The corresponding class of h-pseudodifferential
 operators $A_h: C^{\infty}(M) \rightarrow
C^{\infty}(M)$ have Schwartz
kernels locally of the form
$$A_h(x,y) = (2\pi h)^{-n} \int_{\R^{n}} e^{i\langle x-y,\xi \rangle/h} a(x,\xi;h) d\xi$$ with
$a \in S^{k,m}_{cl}(T^*M;[0,h_0))$. We write $A_h = Op_h(a)$ for
the operator with symbol $a(x,\xi;h).$
\end{defn}

Since $\partial C_{\eo}$ is $C^{\omega}$-diffeomorphic to a circle
$S^1 = \R/ 2\pi \Z,$ it suffices here to consider h-pseudodifferential
operators on tori and the latter  operators can  be conveniently
described globally in terms of their action on Fourier
coefficients. Given $A_h \in Op_h(S^{0,m}(T^* {\mathbb T}^n))$ one
can  write the Schwartz kernel in the form
$$ A_h(x,y) = (2\pi )^{-n} \sum_{\xi \in (h\Z)^n} e^{i \langle x - y, \xi \rangle /h } a_{\T}(x,\xi;h); \,\, (x,y) \in [-\pi,\pi]^n \times [-\pi,\pi]^n $$
where $a_{\T}(\cdot,\xi) \in C^{\infty}({\T})$ and
$$ | \partial_{x}^{\alpha} \Delta^{\beta}_{h,\xi} a_{\T}(x,\xi) | \leq  C_{\alpha,\beta} ( 1 + |\xi|)^{m- |\beta| }$$
where $\Delta^{\beta}_{h,\xi}  a_{\T}(x;\xi_1,...,\xi_n)= a_{\T}(x;\xi_1+h\beta_1,...,\xi_n + h\beta_n) -
a_{\T}(x;\xi_1,...,\xi_n)$ is the semiclassical iterated difference
operator in the frequency coordinates. The converse also holds, so
that the two realizations of h-pseudodifferential operators are
equivalent  (see \cite{Ag,Mc} for the homogeneous case where $h =1.$ The extension to the semiclassical setting is straightforward).

We are interested here specifically  in the  h-pseudodifferential
cutoffs $\chi_h = Op_h(\chi) \in Op_h(S^{0,-\infty}(T^*\partial C_{\eo}))$ where $\chi \in C^{\infty}_{0}(T^*
\partial C_{\eo} )$. We naturally identify $\partial C_{\eo}$ with
$\R/ 2\pi \Z$ by using the periodic $C^{\omega}$ arclength
parametrization
$$ \kappa: [-\pi,\pi] \rightarrow \partial C_{\eo}; \, \, t \mapsto \kappa(t).$$

\subsection{Semiclassical wave front sets of eigenfunction restrictions}
Let $H^{n-1} \subset M^n$ be any interior {\em smooth}
hypersurface in a compact manifold with or without boundary. In
this subsection, we do not make any analyticity assumptions on
either $H$ or the ambient manifold, $M.$  Let $u_h^H := \gamma_H
\phi_h$ be the eigenfunction restriction where $\gamma_H: f
\mapsto f|_H, f \in C^0(H).$ Then, making a Fermi-coordinate decomposition in a collar neighbourhood of $H,$ it is not hard to show that
\begin{equation} \label{wf1}
WF_h(u_h^H) \subset B^*H = \{ (s,\sigma) \in T^*H; |\sigma|_g \leq
1 \}. \end{equation}
 For Euclidean
domains $M = \Omega$,  (\ref{wf1}) follows directly from potential layer
formulas. For completeness and because of the  importance of the localization of $WF_h(u_h^H)$ in our argument,  we sketch the proof of (\ref{wf1}) for planar domains, which is the case we are interested in here.
The proof of (\ref{wf1}) uses the potential layer representations of eigenfunctions discussed in
Subsection \ref{potentials} in  the planar case $n=2$  restricted to
the curve $H$.   It is immediate from (\ref{green})
that

\begin{equation} \label{pl}
u_{h}^{H}(t) = \int_{-\pi}^{\pi} N(q(t),r(s);h) \,\, u_h^{\partial
\Omega}(s)  d\sigma(s). \end{equation}

Since $H \subset \Omega$ is interior, $\inf\limits_{t,s\in [-\pi,\pi]}
|q(t) - r(s)| \geq C >0$ and so, from (\ref{Hankel}) it follows that
\begin{equation} \label{asymptotics}
\tilde{N}(t,s;h) := N(q(t),r(s),h)  = (2\pi h)^{-\frac{1}{2}}
e^{ih^{-1} |q(t) - r(s)|} a(t,s;h) \end{equation} where,
$$a(t,s;h) = \sum\limits_{j=0}^{k} a_{j}(t,s) h^{ j} + {\mathcal O}(h^{k+1}) $$
uniformly for all $(q(t),r(s)) \in H \times \partial \Omega$ with
$a_{j} \in C^{\infty}([-\pi,\pi] \times [-\pi,\pi]).$   Similar
uniform asymptotics hold for derivatives as well.


Let $\chi(\xi) \in  C_{0}^{\infty}(\R)$ be a cut-off function
equal to zero when $|\xi| \geq 2$ and equal to $1$ for $|\xi|<
3/2$ and let $Op_h (\chi) \in Op_{h}(S^{0,-\infty}(T^*H;(0,h_0]) )$ be the
microlocal cut-off with kernel
$$Op_h(\chi)(t,t') = (2\pi )^{-2} \sum_{\xi \in h\Z} e^{i \langle  t - t', \xi \rangle/h} \, \chi(\xi); \,\, (t,t')  \in [-\pi,\pi] \times [-\pi,\pi]. $$
Then, from (\ref{pl}) and (\ref{asymptotics}), it follows that
\begin{align*}
&Op_{\h} (1-\chi) u_h^H (t) \\
&= Op_h(1-\chi )  N u_h^{\partial \Omega} (t)\\
 &=(2\pi )^{-2} \sum_{\xi \in h\Z}  \int_{-\pi}^{\pi}\int_{-\pi}^{\pi}e^{i [ (t-t') \xi + |q(t') - r(s)| ]/h}  \, (1-\chi)(\xi) \, a(q(t'),r(s);h)  \, u_{h}^{\partial \Omega}(s) \, ds \, dt.'
 \end{align*}
Since $|d_{t'}q(t')| = 1,$ differentiation of the phase
$$ \Psi(t,t',s;\xi): = (t-t') \xi + |q(t')-r(s)|$$
in $t'$ gives

\[  | \partial_{t'} \Psi(t,t',s;\xi) | =  \Big| - \xi + \left\langle d_{t'}q(t'), \frac{ q(t') - r(s)}{|q(t')-r(s)|} \right\rangle \Big| \geq  |\xi| - 1 \geq \frac{1}{2}; \,\, \text{when} \,\, |\xi| \geq \frac{3}{2}. \]
Since $|\xi| \geq \frac{3}{2}$ when $\xi  \in  \, $supp$ \chi,$
repeated integration by parts in $t'$, an application of
Cauchy-Schwarz and using that $\| u_{h}^{\partial \Omega} \|_{L^2}
= {\mathcal O}(h^{-1/4}) $ \cite{BGT} implies that $\sup_{t \in
[0,2\pi]} | Op_{\h} (1-\chi(\xi)) u_h^H(t)| = {\mathcal
O}(h^{\infty} \langle \xi \rangle^{-\infty})$ where $\langle \xi \rangle:= \sqrt{1 + |\xi|^2}.$ The same argument for $t$-derivatives combined with the Sobolev lemma
 implies that for all $k \in \Z^+,$
\begin{equation} \label{wfupshot1}
\|  Op_{\h} (1-\chi(\xi)) u_h^H \|_{ C^{k}([-\pi,\pi]) } = {\mathcal
O}_{k}(h^{\infty} \langle \xi \rangle^{-\infty}). \end{equation} The wavefront bound in
(\ref{wf1}) is an immediate consequence of (\ref{wfupshot1}) since
the cutoff function $\chi(\xi)$ can be chosen with support
arbitrarily close to $|\xi|=1$ and the same argument gives
(\ref{wfupshot1}) for any such cutoff.

In the next section we improve the compactness result (\ref{wf1}) under the  real-analyticity assumption on $(\partial \Omega, H)$ to show that in the h-microlocal decomposition (\ref{outline2}) the residual term $\| \partial_{T} (1-Op_h(\chi_R)) u^{H,\C}_h \|_{\eo}^2 = {\mathcal O}(e^{-C_0 \langle R \rangle  /h})$ with appropriate $C_0 >0$ and where  $\chi_R \in C^{\infty}_{0}(\R)$ with supp $\chi_{R} \subset \{ \xi; |\xi| \leq R \}.$  Hence, to get an asymptotic estimate for the frequency function of $u_{h}^{H,\C}$, it suffices to bound $\| \partial_{T} Op_h(\chi_R) u^{H,\C}_h\|_{\eo}$ and the latter is ${\mathcal O}(h^{-1} \| u^{H,\C} \|_{\eo} )$ by standard $L^2$-boundedness of the h-pseudodifferential operator $h \partial_T \chi_h \in Op_h( S^{0,-\infty}(T^*\partial C_{\eo})).$

\subsection{The real  analytic case} We now assume that $H$ is real-analytic. As outlined in the previous section,  our goal here is to improve the ${\mathcal O}(h^{\infty})$-bound in (\ref{wfupshot1}) to obtain exponential decay estimates for the residual mass term of the form $\| Op_{h}(1-\chi) u_{h}^{H,\C} \|_{L^2_{\eo}} = {\mathcal O}(e^{-C_0/h}).$   In the following, using the parametrization $[-\pi,\pi] \ni t \mapsto \kappa(t)$, we identify $\partial C_{\eo}$ with $\R/(2\pi \Z)$  and so, $Op_h(1-\chi): C^{\infty}(\R /2\pi \Z) \rightarrow C^{\infty}(\R /2\pi \Z).$

\subsubsection{Holomorphic continuation of the $\tilde{N}(t,s;h)$-kernel.}

Given $(z,w) \in \C^2,$  consider the map $ z+iw \mapsto
(z+iw)^* = z - iw$ which is the holomorphic continuation to $\C^2$
of the usual complex conjugation $x+iy \mapsto x-iy$ when $(x,y)
\in \R^2.$  In the following,  $z \mapsto z^{1/2}$ denotes the
 square root with positive real part with $-\pi < arg(z) \leq \pi.$

In view of Proposition \ref{N} it follows that for $\eo >0$ sufficiently small, the potential layer equation $u_{h}^{H}(t) = N
u_{h}^{\partial \Omega}(t)$ analytically continues to the equation
\begin{equation} \label{holcont1}
u_{h}^{H,\C}(\zeta) = [N u_{h}^{\partial \Omega}]^{\C}(\zeta); \,\,\, \zeta
\in S_{2\eo,2\pi}. \end{equation}
In particular, we consider here the case where $ \zeta = \kappa(t) \in \partial C_{\eo}$. 

For  $\zeta \in U_{\eo}$, where $U_{\eo} := \{\zeta \in S_{2\eo,2\pi}; \max\limits_{z\in \partial C_{\eo} } |z-\zeta| < \frac{\eo}{2}\}$, equation (\ref {holcont1}) remains valid and moreover, since
\begin{equation} \label{holphase}
\Re \,  [ q^{\C}(\zeta) - r(s)] [q^{\C}(\zeta)^* - \overline{r(s)}] \gtrapprox  \eo^2 >0 \,\, \text{when} \,\,  (\zeta,s) \in  U_{\eo}\times [-\pi,\pi],\end{equation}
 the kernel
\begin{equation} \label{holcont2}
N^{\C}(q^{\C}(\zeta),r(s),h) = \text{Ha}^{(1)}_{1} \left( h^{-1} \sqrt{ [ q^{\C} (\zeta)- r(s))][ q^{\C} (\zeta)^* - \overline{r(s)})]}\right)
 \end{equation}
 is holomorphic for
 $ \zeta \in U_{\eo}.$ By Proposition \ref{N}, we have

\begin{equation} \label{holcont3}
u_{h}^{H,\C}(\zeta) = \int_{-\pi}^{\pi}  N^{\C}(q^{\C}(\zeta),r(s),h) \, u_{h}^{\partial \Omega}(s) d\sigma(s),\,\,\, \zeta \in  U_{\eo}. \end{equation}

It follows from  (\ref{holphase}),  (\ref{holcont2})  and the integral formula (\ref{Hankel}) that the real WKB asymptotics for the $N(t,s,h)$-kernel  \cite{HZ,TZ} holomorphically continues in $t$ to give the complex asymptotic formula
\begin{equation} \label{complex wkb}
N^{\C}(q^{\C}(\zeta),r(s),h)  = (2\pi h)^{-\frac{1}{2}} e^{ i\rho^{\C}(q^{\C}(\zeta), r(s))/h} a^{\C}(\zeta,s;h); \,\, (\zeta,s) \in
U_{\eo}\times [-\pi,\pi], \end{equation} where,
$a^{\C}(\zeta,s;h) \sim_{h \rightarrow 0} \sum_{k=0}^{\infty}
a_{k}^{\C}(\zeta,s) h^{k}$ with $a_{k}(\cdot,s) \in {\mathcal O}(
U_{\eo})$ and
\begin{equation} \label{complexified phase}
\rho^{\C}(q^{\C}(\zeta),r(s)) = \sqrt{ [ q^{\C}(\zeta)- r(s)] [ q^{\C}(\zeta)^* - \overline{r(s)}] }; \,\, (\zeta,s) \in U_{\eo} \times [-\pi,\pi]. \end{equation}
In particular, for $\zeta =\kappa(t) \in \partial C_{\eo},$  we have
\begin{equation}\label{holcont4}
u_{h}^{H,\C}(\kappa(t)) = \int_{-\pi}^{\pi} N^{\C}(q^{\C}(\kappa(t)),r(s),h) \, u_{h}^{\partial \Omega}(s) d\sigma(s),\,\,\, t\in [-\pi,\pi],
\end{equation}
where $N^{\C}(q^{\C}(\kappa(t)),r(s),h)$ satisfies the asymptotics in  (\ref{complex wkb}). Since we compute in the parametrization variables $(t,s) \in [-\pi,\pi],$ to simplify notation we define
\begin{equation} \label{tildekernel}
\tilde{N}^{\C}(t,s,h):= N^{\C}(q^{\C}(\kappa(t)),r(s),h); \,\,\, (t,s) \in [-\pi,\pi] \times [-\pi,\pi]. \end{equation}




\subsubsection{ Estimating the residual kernel $[Op_h(1-\chi) \tilde{N}^{\C}] (t,s;h)$}
 Let $\chi \in C^{\infty}_{0}(\R)$ be a cutoff with $\chi(\xi) = 0$ when

$$ |\xi| \geq  20 \eo^{-1} \, \sup_{(\zeta,s) \in U_{\eo} \times [-\pi,\pi]}  | \rho^{\C}(q^{\C}(\zeta),r(s) ) |$$ and
 $\chi(\xi) = 1$ when  $$|\xi| \leq  10 \eo^{-1} \, \sup_{(\zeta,s) \in U_{\eo} \times [-\pi,\pi]} | \rho^{\C}(q^{\C}(\zeta),r(s) )  |.$$

 In this section we prove
\begin{prop} \label{decay estimate} Let  $H \subset \Omega$ be $C^{\omega}$ interior curve with $\dist(H,\partial \Omega) <\delta(\eo)$ and $\partial C_{\eo}$ be a  curve satisfying (\ref{oval}). Then, assuming  $\delta(\epsilon_0) >0$ is sufficiently small and $k \in \Z^+,$  there is a constant $C_k(\eo) >0$ such that for $h \in (0,h(\eo)],$
$$  \| \, [Op_{h}(1-\chi)   \tilde{N}^{\C}](\cdot,\cdot;h) \, \|_{C^{k}([-\pi,\pi] \times [-\pi,\pi])}   = {\mathcal O}(e^{-C_k(\eo)/h}). $$
\end{prop}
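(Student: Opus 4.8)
The plan is to analyze the Schwartz kernel of $Op_h(1-\chi)\tilde{N}^{\C}$ directly, exploiting the complex WKB representation \eqref{complex wkb}. Writing the $h$-pseudodifferential cutoff in Fourier series form on the torus $\R/2\pi\Z$ (via the arclength parametrization $\kappa$), we have
\begin{equation}\label{plan1}
[Op_h(1-\chi)\tilde{N}^{\C}](t,s;h) = (2\pi)^{-1}(2\pi h)^{-1/2} \sum_{\xi \in h\Z} \int_{-\pi}^{\pi} e^{i[(t-t')\xi + \rho^{\C}(q^{\C}(\kappa(t')),r(s))]/h}\, (1-\chi)(\xi)\, a^{\C}(\kappa(t'),s;h)\, dt'.
\end{equation}
Because $\kappa(t') \in \partial C_{\eo}$ ranges in the Grauert tube of radius at most $7\eo/4$ and $\dist(H,\partial\Omega)$ is bounded below, the phase function $t' \mapsto \rho^{\C}(q^{\C}(\kappa(t')),r(s))$ is holomorphic in a fixed complex neighborhood of $[-\pi,\pi]$ and, crucially, has bounded $t'$-derivative: $|\partial_{t'}\rho^{\C}| \leq \sup_{(\zeta,s)\in U_{\eo}\times[-\pi,\pi]} |\partial_\zeta \rho^{\C}|\cdot \sup|\kappa'|$, which by our choice of the cutoff $\chi$ is strictly less than $\tfrac{1}{2}|\xi|$ whenever $\xi \in \supp(1-\chi)$. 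First I would make this quantitative: there is $c_0 = c_0(\eo) > 0$ such that $|\partial_{t'}[(t-t')\xi + \rho^{\C}]| \geq c_0|\xi| \geq c_1(\eo)$ for $\xi \in \supp(1-\chi)$, since on that set $|\xi| \geq 10\eo^{-1}\sup|\rho^{\C}|$.

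The main step is a complex contour deformation in the $t'$-integral. Since the integrand is $2\pi$-periodic and holomorphic in a strip $|\Im t'| \leq \de(\eo)$, I would shift the contour $[-\pi,\pi]$ to $[-\pi,\pi] \mp i\de$, choosing the sign of the shift according to $\sgn \xi$ so that the term $-i t'\xi/h$ in the exponent acquires a negative real part of size $-\de|\xi|/h$. One must check that the real part of $i\rho^{\C}$ does not overwhelm this gain: along the shifted contour $\Re[i\rho^{\C}(q^{\C}(\kappa(t' \mp i\de)),r(s))]$ changes by at most $C\de$ relative to its value on the real axis, so by first taking $\de(\eo)$ small (this is the role of the smallness hypothesis $\dist(H,\partial\Omega) < \de(\eo)$, equivalently $\eo$ small) the net exponent is bounded above by $(-\de|\xi| + C\de)/h \leq -c_2(\eo)\langle\xi\rangle/h$ on $\supp(1-\chi)$, again using $|\xi| \geq 10\eo^{-1}\sup|\rho^{\C}| \geq 10\eo^{-1}\sup|\Re i\rho^{\C}|$. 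This yields, for each $\xi \in \supp(1-\chi)$,
\begin{equation}\label{plan2}
\left| \int_{-\pi}^{\pi} e^{i[(t-t')\xi + \rho^{\C}]/h}\, a^{\C}(\kappa(t'),s;h)\, dt' \right| \leq C\, e^{-c_2(\eo)\langle\xi\rangle/h},
\end{equation}
uniformly in $(t,s) \in [-\pi,\pi]^2$, where I also use that $a^{\C}$ is uniformly bounded on $U_{\eo}\times[-\pi,\pi]$ together with its derivatives. Summing over $\xi \in h\Z$ with $|\xi|$ bounded below converts the $h\sum_\xi$ into a convergent Riemann-type sum bounded by $\int_{|\xi|\geq c_1(\eo)} e^{-c_2(\eo)|\xi|/h}\,d\xi/h = {\mathcal O}(e^{-c_3(\eo)/h})$, giving the $C^0$ estimate.

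To upgrade to $C^k$ norms, I would observe that each $t$-derivative brings down a factor $\xi/h$ and each $s$-derivative a factor $\partial_s\rho^{\C}/h$ (bounded), so after $k$ derivatives the bound in \eqref{plan2} acquires an extra polynomial factor $\langle\xi\rangle^k h^{-k}$, which is harmlessly absorbed into the exponential $e^{-c_2(\eo)\langle\xi\rangle/h}$ at the cost of shrinking $c_3(\eo)$ slightly, yielding $C_k(\eo)$ as claimed. The main obstacle is bookkeeping in the contour deformation: one must verify that the holomorphy strip of $t' \mapsto \rho^{\C}(q^{\C}(\kappa(t')),r(s))$ and of $a^{\C}$ can be taken of a width $\de(\eo)$ independent of $s$ and that the deformed contour stays inside $U_{\eo}$ where \eqref{complex wkb} is valid — this is exactly where the hypotheses $\dist(H,\partial\Omega) < \de(\eo)$ and the geometric constraints \eqref{oval} on $\partial C_{\eo}$ (namely $\max_{z\in\partial C_{\eo}}|\Im z| \leq 7\eo/4 < 2\eo$) are used, ensuring the whole picture fits in the region $U_{\eo}$ of holomorphy of the Hankel-function kernel.
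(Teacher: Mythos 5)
Your proposal is correct and follows essentially the same route as the paper's proof: both substitute the complex WKB expansion \eqref{complex wkb} of the kernel, deform the $t'$-contour by a purely imaginary shift $-i\,\sgn(\xi)\cdot c(\eo)$ so that the term $-it'\xi/h$ contributes a factor $e^{-c|\xi|/h}$, exploit the support condition on $1-\chi$ (namely $|\xi|\geq 10\eo^{-1}\sup|\rho^{\C}|$) to show this damping dominates any growth of $\Re\,[i\rho^{\C}]$ along the shifted contour, and sum over $\xi\in h\Z$. The upgrade to $C^k$ norms by absorbing the polynomial factors produced by differentiation into the exponential is also the same observation the paper makes.
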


\begin{proof} In light of the complexified potential layer formula in (\ref{holcont3}), we  substitute the complex WKB asymptotics for $N^{\C}(q^{\C}(\zeta),r(s),h)$ in (\ref{complex wkb}) and use
the Cauchy integral formula to deform contours of integration.

From  (\ref{holcont3}) and (\ref{complex wkb}), one gets that
\begin{equation} \label{integralsum} \begin{array}{ll}
[Op_h(1-\chi )\tilde{N}^{\C}](t,s,h) \\ \\
 =(2\pi )^{-2} \sum\limits_{\xi \in h\Z}  \int_{-\pi}^{\pi}e^{i [ (t-t') \xi + \rho^{\C}( q^{\C} (\kappa(t')), r(s)) ]/h}  \, (1-\chi)(\xi) \, a^{\C}(  \kappa(t'),r(s);h)  \, dt'.  \end{array} \end{equation}
Consider the complex phase
$$ \Psi^{\C}(t,t',s):= (t-t') \xi + \rho^{\C}( q^{\C} (\kappa(t')), r(s)).$$

For
simplicity, write $\rho^{\C}(t',s) $ for
$\rho^{\C}(q^{\C}(\kappa(t')),r(s))$. Consider for $\xi \in h\Z$  the deformed contour
\begin{equation} \label{new contour}
 \omega_{\xi}(t') = t' - i \frac{\eo}{2} \sgn(\xi) .\end{equation}
 where  $ (t,t',s) \in [-\pi,\pi]^3.$
The deformed phase function
\begin{equation} \label{Taylor}
  \Psi(t,\omega_{\xi}(t'),s) = \Psi \left( t, t' - i \frac{\eo}{2} \sgn(\xi) ,s \right) = (t-t')\xi + i \frac{\eo}{2} |\xi| + \rho^{\C}(\omega_{\xi}(t'),s).   \end{equation}
Since $|\xi| \geq  10 \eo^{-1} \, \sup\limits_{(\zeta,s) \in U_{\eo} \times
[-\pi,\pi]} | \rho^{\C}(q^{\C}(\zeta),r(s))  |$ when $\xi \in
\text{supp}(1-\chi),$  it follows from (\ref{Taylor}) that
\begin{equation} \label{imag bound}
\Im \Psi(t,\omega_{\xi}(t'),s)  \geq  4  \sup\limits_{(\zeta,s) \in U_{\eo}
\times [-\pi,\pi]}| \rho^{\C}(q^{\C}(\zeta),r(s))  |
\gtrapprox \eo \end{equation} uniformly for $(t,t',s) \in
[-\pi,\pi]^{3}.$ Moreover, for $|\xi| \gg 1$ it also follows from
(\ref{Taylor}) that
\begin{equation} \label{imag bound2}
\Im \Psi(t,\omega_{\xi}(t'),s) = \frac{\eo}{2} |\xi| + {\mathcal
O}(1) \geq  \frac{\eo}{3} |\xi|. \end{equation}
Using Cauchy's theorem, we deform the $t'$-contour of
integration in (\ref{integralsum}) to get
\begin{equation} \label{integralsum2} \begin{array}{ll}
[Op_h(1-\chi )\tilde{N}^{\C}](t,s,h) \\ \\
 =(2\pi)^{-2} \sum_{\xi \in h\Z}  \int_{-\pi}^{\pi}  e^{i \Psi (t,\omega_{\xi}(t'),s;\xi)/h } \, (1-\chi)(\xi) \, a^{\C}( \kappa^{\C}(\omega_{\xi}(t')) , r(s) ; h )  \, dt'  \end{array} \end{equation}
where the imaginary part of the deformed phase function
$\Psi(t,\omega_{\xi}(t'),s)$ satisfies (\ref{imag bound}). It
follows from (\ref{imag bound}) and (\ref{imag bound2}) that for appropriate $C(\eo) \gtrapprox \eo,$
\begin{equation} \label{integralsum3}
| [Op_h(1-\chi )\tilde{N}^{\C}(t,s,h)  | \leq e^{- \frac{
C(\eo)}{h} }  \times \left( \sum_{|\xi| \geq1} e^{ - \frac{\eo}{4
h} |\xi|} \right) = {\mathcal O}( e^{- \frac{C(\eo)}{h}
}).\end{equation}

The argument for the higher $C^k$-norms is basically the same since the complex phase function $\Psi^{\C}(t,t',s)$ is unchanged. The derivatives $\partial_s^{\alpha}$ and $\partial_t^{\beta}$ just create additional polynomial powers in $h^{-1}$ in the amplitude $a^{\C}(\cdot,\cdot;h).$

\end{proof}

\begin{rem} \label{R}  For future reference (see proof of Theorem \ref{mainthm1} below),  we note that when $\chi_R \in C^{\infty}_{0}(\R)$ with $\chi_R(\xi) =1$ for $|\xi| < R$ and  supp $\chi_R \subset \{ \xi; |\xi| < 2R \},$ it is clear from  (\ref{imag bound2}) that
\begin{equation} \label{integralsum3}
\| Op_h(1-\chi_R )\tilde{N}^{\C}(\cdot,\cdot,h) \|_{C^k} = {\mathcal O}_k( e^{- \frac{C_R(\eo)}{h}
}),\end{equation}
where $C_R(\eo) \gtrapprox R$ as $R \rightarrow \infty.$
\end{rem}


\section{Proof of  Theorem 1.1} \label{Mainthm1}
 \begin{proof} 
  Let $\chi_{R} \in C_{0}^{\infty}(\R;[0,1])$ be a frequency cutoff as in Proposition \ref{decay estimate} with $\chi_{R}(\xi) = 1$ for $|\xi| \leq R$ and $\chi_{R}(\xi) = 0$ for $|\xi| \geq 2R.$ To simplify notation, in the following we  continue to write $L^2_{\eo} = L^{2} (\partial C_{\eo})$ (resp.  $L^2 = L^2 ([-\pi,\pi])$) and the corresponding unit speed parameterizations  are $t \mapsto \kappa(t)$ (resp. $t \mapsto q(t)).$\\

 We recall that the basic frequency function estimate  gives
 \begin{align*}
 n(h,H) & \leq h^{-1}  \frac{ \| h \partial_{T}  u_{h}^{H,\C} \|_{ L^2_{\eo} }  }{  \| u_{h}^{H,\C} \|_{ L^2_{\eo} } } \\
& \leq h^{-1} \Big( \frac{ \|  Op_h(\chi_R) (h \partial_{T})   u_{h}^{H,\C} \|_{ L^2_{\eo} }  }{  \| u_{h}^{H,\C} \|_{ L^2_{\eo} } }  \, + \,   \frac{ \| (1-Op_h(\chi_R)) (h \partial_{T})  u_{h}^{H,\C}\|_{ L^2_{\eo} }  }{  \| u_{h,H}^{\C} \|_{ L^2_{\eo} } }  \, \Big).
\end{align*}
From Proposition \ref{decay estimate} and Cauchy-Schwarz, it
follows that
\begin{equation} \label{residual1}
 \frac{ \| (1-Op_h(\chi_R)) h\partial_{T}  u_{h}^{H,\C}\|_{ L^2_{\eo} }  }{  \| u_{h}^{H,\C} \|_{ L^2_{\eo} } }  = {\mathcal O} \left( \frac{ e^{ - \frac{ C_R(\eo)}{h}  } }{  \| u_{h}^{H,\C} \|_{ L^2_{\eo} } }  \right) . \end{equation}
In the last line of (\ref{residual1}) we have used that $\| u_{h}^{\partial \Omega}\|_{L^2} = {\mathcal O}(h^{-\alpha})$ for $\alpha >0$ (for example, Tataru's sharp bound \cite{Ta} gives $\alpha = 1/3$).
Since $u_{h}^{H,\C}(t)$ is holomorphic for all $t \in
S_{2\eo,2\pi},$ it follows from the Cauchy integral formula (see figure 1) and
the Cauchy-Schwarz inequality that
\begin{align} \label{residual2}
\sup_{t \in S_{\eo,\pi}} | u_{h}^{H,\C}(t)| & \leq C_2   \cdot\frac{1}{4\pi^2} \left(  \int_{-\pi}^{\pi} \int_{-\pi }^{\pi}  | \kappa(s) - t|^{-2}  \, ds dt  \right)^{\frac{1}{2}}   \cdot \| u_{h}^{H,\C} \|_{L^{2}_{\eo}}  \\ &= {\mathcal O}(1)  \| u_{h}^{H,\C} \|_{L^{2}_{\eo}}\notag .
 \end{align}
In (\ref{residual2}) we use that $\partial C_{\eo}$ and $\overline{S_{\eo,\pi}}$ are disjoint so that $f(s,t) = |\kappa(s) - t|^{-1} \in L^2([-\pi,\pi] \times [-\pi,\pi]).$
Substitution of (\ref{residual2}) in (\ref{residual1}) then
implies that
\begin{equation} \label{residual 3}
 \frac{ \| (1-Op_h(\chi_R)) h\partial_{T}  u_{h}^{H,\C}\|_{ L^2_{\eo} }  }{  \| u_{h}^{H,\C} \|_{ L^2_{\eo} } }  = {\mathcal O}\left(  e^{ \frac{ - C_R(\eo)}{h} } \| u_{h}^{H,\C} \|^{-1}_{L^{\infty}(S_{\eo,\pi})} \right) = {\mathcal O} ( e^{ \frac{ - C_R(\eo) + C_0}{h} }), \end{equation}
 since by assumption $\| u_{h}^{H,\C} \|_{L^{\infty}(S_{\eo,\pi})}  \geq e^{-\frac{C_0}{h}}$ for some $C_0>0.$
 Since $ Op_h(\chi_R) (h \partial_T) \in Op_h(S^{0,-\infty}(T^*H)),$ it follows by $L^2$-boundedness that
  \begin{equation} \label{main}
 \frac{ \| Op_h(\chi_R) h \partial_{T}  u_{h}^{H,\C}\|_{ L^2_{\eo} }  }{  \| u_{h}^{H,\C} \|_{ L^2_{\eo} } }  = {\mathcal O}_{R,\eo}(1). \end{equation}
The constant $C_{R}(\eo) \gtrapprox R$ as $R \rightarrow \infty,$  and so, the proof of Theorem \ref{mainthm1} follows from
(\ref{residual 3}) and (\ref{main}),
by choosing $R$ sufficiently large so that $C_{R}(\eo) - C_0 >0$ in (\ref{residual 3}). \end{proof}
 \begin{figure}[h]
\centering
\includegraphics[width=0.8\textwidth]{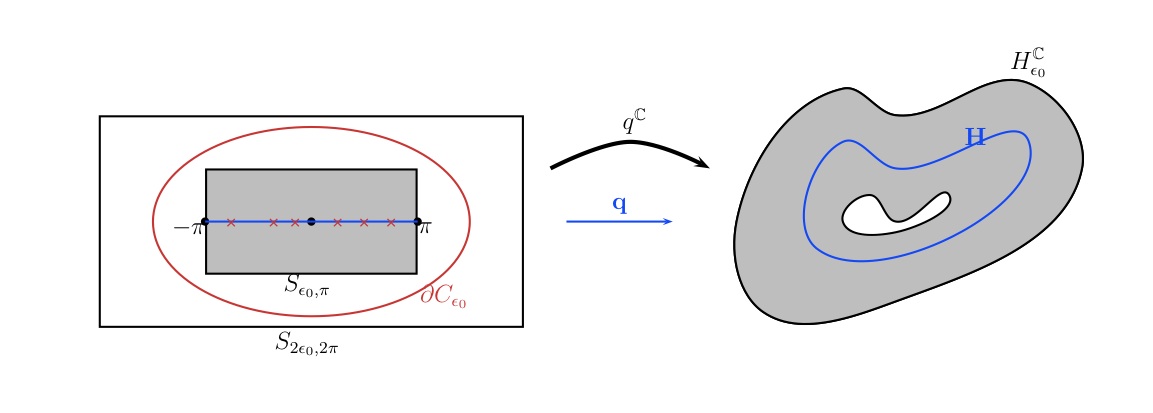}
\caption{}
\end{figure}

\section{Proofs of Theorems \ref{mainthm2} and \ref{mainthm3} }
\begin{proof}  The key ingredient in the proofs of  Theorems \ref{mainthm2} and \ref{mainthm3} is the following operator bound:


\begin{prop}\label{mainthm2prop} Let $H \subset \Omega$ be a closed, strictly convex, interior  real analytic curve. Let $N^{\C}(q^{\C}, r;h)$ be the holomorphic
extension of $N(q, r;h )$ in the $q$ variables to $H_{\eo}^{\C}$ with the corresponding operator
$$N^{\C}(h): L^{2}(\partial \Omega; ds) \rightarrow L^{2}( H_{\eo}^{\C}; \, e^{- \, \frac{S(t)}{h}  }  \, dt d\overline{t}),$$
where $H_{\eo}^{\C} = \{ q^{\C}(t); |\Im t| \leq \eo \}.$   Let  $a \in C^{\infty}_{0}(H_{\eo}^{\C})$ with
\begin{align} \label{supportprop}  \text{supp}  \, a  \subset \{ q^{\C}(t) \in  H_{\eo}^{\C}; \, \frac{\eo}{6} \leq \Im t \leq \frac{5\eo}{6} \}. \end{align}
 Then for $h \in (0, h_0(\eo)]$, and $\eo >0$ sufficiently small, there exists an {\em associated symbol} $a_{\mathcal G} \in C^{\infty}_{0}(B^*\partial \Omega) \subset S^{0,-\infty}(T^* \partial \Omega \times (0,h_0])$ such that
$$
h^{-1/2}  \, N^{\C}(h)^{*} e^{-2S/h} a N^{\C}(h) =  Op_{h}(a_{\mathcal G}) +  R(h).
$$
For $(s,\sigma) \in B^*\partial \Omega,$ the symbol
\begin{equation}\label{aG}
a_{\mathcal G}(s,\sigma) = \frac{1}{\sqrt{2}} a(\Re t(s,\sigma), \Im t(s,\sigma)) \, \kappa_H^{-2}(Y(s)) \,  |\Im t(s,\sigma)|^{-1} \, \gamma^{2}(s,\sigma),
\end{equation}
where, $\gamma(s,\sigma) = \sqrt{1-|\sigma|^2}$ and
\begin{align} \label{correspondence}
Y(s) &= \Re t(s,\sigma)  ( 1 + {\mathcal O}(|\Im t(s,\sigma)|) ), \nonumber \\
 \sigma &=  - \langle \omega(s,Y(s)), T_{\partial \Omega}(s) \rangle  + \frac{\kappa^2_H(Y(s))}{2} d_s Y(s) |\Im t(s,\sigma)|^2 ( 1+ {\mathcal O}(|\Im t(s,\sigma)|) ). \end{align}
Moreover, the remainder satisfies
 $$\| R(h) \|_{L^{2}(\partial \Omega)  \rightarrow L^{2}(\partial \Omega) } = {\mathcal O}(h).$$
  \end{prop}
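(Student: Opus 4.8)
### Proof Proposal for Proposition \ref{mainthm2prop}

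The plan is to realize the operator $h^{-1/2} N^{\C}(h)^* e^{-2S/h} a\, N^{\C}(h)$ as a composition $F_\chi(h)^* \chi^2 F_\chi(h)$ (up to the normalizing power of $h$), where $F_\chi(h) = h^{-1/4} e^{-S/h} \chi \gamma_H^{\C} N^{\C}(h)$ is the weighted complexified layer-potential operator described in subsection \ref{microlocal}, and then to identify this composition as an $h$-pseudodifferential operator of order $0$ on $\partial\Omega$ via a stationary-phase analysis. First I would insert the complex WKB expansion (\ref{complex wkb}) for $N^{\C}(q^{\C}(t),r(s),h) = (2\pi h)^{-1/2} e^{i\rho^{\C}(t,s)/h} a^{\C}(t,s;h)$ into the Schwartz kernel of $F_\chi(h)^* \chi^2 F_\chi(h)$, which produces an oscillatory integral in the complex variable $t$ over the support of $\chi a$:
\begin{align*}
K(s,s';h) = c\, h^{-1/2-1} \int \int_{H^{\C}_{\eo}} e^{[\,i\rho^{\C}(t,s) - i\overline{\rho^{\C}(t,s')} - 2S(t)\,]/h}\, \overline{a^{\C}(t,s';h)}\, a^{\C}(t,s;h)\, |\chi a|(t)\, dt\, d\overline t.
\end{align*}
The key point is that, by the definition $S(t) = \max_{s'} \Re(i\rho^{\C}(t,s')) = \Re(i\rho^{\C}(t,s^*(t)))$ and the nondegeneracy of that maximum established in section \ref{normalform} under the strict convexity of $H$, the real part of the exponent is $\le 0$ and vanishes only on the diagonal-type set $s = s' = s^*(t)$, where one also has (from (\ref{realpart})) the quadratic behavior $\Re(i\rho^{\C}(t,s)) - S(t) \sim -\beta_1(t)(s - s^*(t))^2$ with $\beta_1 \sim \kappa_H^2\,\Im t$. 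This is exactly the structure needed for a complex stationary-phase / FBI-transform argument.

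Next I would carry out the reduction (\ref{fbi1})--(\ref{fbi2}): conjugating by the $h$-FIO $U_Y(h)$ quantizing the glancing diffeomorphism $Y:\partial\Omega\to H$ and changing to suitable complex coordinates $\tau_1 + i\tau_2$ on $H^{\C}_{\eo/2,\eo}$, the operator $F_\chi(h) U_Y(h)^{-1}$ becomes, $h$-microlocally on the relevant frequency set, an order-zero FBI transform $T(h)$ of the form (\ref{fbi2}) with weight $\tilde\beta > 0$ on the support of the amplitude. Here the Jacobian factor $\kappa_H^{-2}(Y(s))$ and the power $|\Im t|^{-1}$ in (\ref{aG}) will emerge from the Gaussian $s'$-integration against $e^{-2\beta_1(t)(s'-s^*(t))^2/h}$ (which contributes $(\pi h/2\beta_1)^{1/2} \sim h^{1/2}\kappa_H^{-1}|\Im t|^{-1/2}$, matched against the explicit $h^{-1/2}$ prefactor), and the factor $\gamma^2(s,\sigma) = 1 - |\sigma|^2$ from the $|\cos\theta|^2$-type factor in the layer-potential kernel $N$ together with the change of variables to $(s,\sigma)$ coordinates on $B^*\partial\Omega$. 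Then the $h$-Egorov theorem for FBI transforms (\cite{Zw} Thm.~13.12) gives $T(h)^*\chi^2 T(h) = Q(h) \in Op_h(S^{0,-\infty})$ on $H$, and conjugating back by $U_Y(h)$ and applying the ordinary $h$-Egorov theorem produces $P(h) = Op_h(a_{\mathcal G}) + R(h)$ on $\partial\Omega$ with $R(h) = O_{L^2\to L^2}(h)$ (the $O(h)$ coming from the subprincipal terms of the symbol expansions and the error in stationary phase); the support condition (\ref{supportprop}) guarantees $a_{\mathcal G} \in C^\infty_0(B^*\partial\Omega)$ since it keeps $t$ away from both the real curve ($\Im t = 0$, where the Gaussian degenerates) and the boundary $\Im t = \eo$ of the Grauert tube, and $\gamma > 0$ strictly on the support.

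Computing the principal symbol $a_{\mathcal G}$ in (\ref{aG}) and the correspondence (\ref{correspondence}) is then a matter of tracking the canonical relation (\ref{wavefront})--(\ref{realwavefront}): the stationary point condition $d_s[\Re\rho^{\C}(t,s)] = \sigma$ at $s = s^*(t)$, combined with the normal-form expansion of $\rho^{\C}$ near the glancing set and the identity $s^*(\Re t, 0) = Y^{-1}(\Re t)$, yields the stated expansions of $Y(s)$ and $\sigma$ in powers of $\Im t$. I expect the main obstacle to be the careful handling of the complex stationary-phase argument when $\Im t$ is small — i.e., controlling uniformity of the Gaussian integration as $\beta_1(t)\sim\kappa_H^2\Im t \to 0$ near the inner edge of the support — which is precisely why the support condition (\ref{supportprop}) forces $\Im t \ge \eo/6$, and secondarily the bookkeeping needed to verify that the complex contour deformations used to exploit $\Re(i\rho^{\C} - S) \le 0$ stay within the region of holomorphy $U_{\eo}$ where the WKB expansion (\ref{complex wkb}) is valid. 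The corner points of $\partial\Omega$ (ignored in the outline) require an additional cutoff argument near the corners, absorbing their contribution into $R(h)$ using that they form a measure-zero set on which the layer-potential kernel is still integrable.
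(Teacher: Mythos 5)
Your proposal is conceptually correct, but it follows the \emph{heuristic} route sketched in subsection \ref{microlocal} of the paper (the ``$h$-microlocal characterization of $P(h)$'') rather than the paper's actual, self-contained proof in section \ref{opanalysis}. The paper itself flags this distinction: it presents the factorization $F_\chi(h)^*F_\chi(h) = U_Y(h)^*T(h)^*\chi^2 T(h)U_Y(h)$ followed by the FBI Egorov theorem as the \emph{rationale} for why $P(h)$ should be an order-zero $h$-pseudodifferential operator, and then notes explicitly that the identity (\ref{fbi1}) ``follows from the analysis in section \ref{opanalysis}'' — i.e.\ the FBI factorization is a \emph{consequence} of the direct kernel computation, not an independent starting point.

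The paper's actual argument proceeds differently: it decomposes $N^{\C}(h) = N_1^{\C}(h) + N_2^{\C}(h)$ by cutting off near and far from the complex glancing set $\{|t - Y(s)| \leq \eo\}$, further splits the near-glancing piece according to whether $|\Re t - Y(s)|$ or $|\Im t|$ dominates, shows all but the $N_1^{11}$-piece contribute $O(e^{-c/h})$ or $O(h)$ in $L^2$-operator norm via the Taylor bounds (\ref{taylor3}) and (\ref{phasebound}), and then reduces $N_1^{11}$ to $h$-pseudodifferential form by an explicit normal-form change of variables $(\Re t,\Im t) \mapsto (\tau_1,\tau_2)$ (Lemma \ref{normalform}), steepest descent in $\tau_1$, and the substitution $\tau_2 \mapsto d_sY(s)^{-1}\tau_2 = \sigma$. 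That route is elementary and fully explicit: it produces the Jacobian $\kappa_H^{-2}(Y(s))|\Im t|^{-1}$ and the factor $\gamma^2$ directly from the computation, with no appeal to the calculus of $h$-FIOs or FBI transforms. Your route, to be rigorous, would require you to actually \emph{prove} that $F_\chi(h)U_Y(h)^{-1}$ is an FBI transform of order zero in the sense of \cite{WZ}, and that proof would consist of essentially the same phase normal-form computation (the nondegeneracy (\ref{weightift2}), the change of variables and Jacobian bound (\ref{jacobian}), the Gaussian structure from (\ref{realpart})) that the paper carries out directly. So you are not saving work, only repackaging it; but the repackaging does buy conceptual clarity — in particular it makes transparent why the answer must be an $h$-pseudodifferential operator, and it explains the role of the $h^{-1/4}$ normalizing factor and the annular support of $a$ (keeping the Gaussian weight $\tilde\beta \sim \kappa_H^2\Im t$ bounded away from zero). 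One concrete thing your outline glosses over: the off-diagonal and far-from-glancing terms must be shown residual by Cauchy--Schwarz and the complex Taylor bounds; in the abstract FBI picture those estimates are what justify the ``$h$-microlocally on the relevant frequency set'' qualifier, and they constitute a nontrivial portion of the paper's proof.
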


\begin{rem} We note that the support properties of $a(t)$ in Proposition \ref{mainthm2prop} are stated for concreteness and can be replaced with any amplitude supported in a strip not containing a real interval. In particular, for $a(t)= \chi_{\eo}(\Im t)$ where $\chi_{\eo} \in C^{\infty}_{0} $ is supported in any strip $ \{\eo - \delta < \Im t < \eo \}$ with $0<\delta < \eo$ arbitrarily small, the operator $$P(h) =  [ h^{-1/4} e^{-S/h} \chi_{\eo} N^{\C}(h)]^* \cdot [ h^{-1/4} e^{-S/h} \chi_{\eo} N^{\C}(h)]$$ in (\ref{poperator})  satisfies $$P(h) = Op_{h}(a_{\mathcal G}) +  R(h),$$ where $a_{\mathcal G}$ is as in (\ref{aG}) with $a(t)= \chi_{\eo}(\Im t)$. \end{rem}

\begin{rem} Since $ | \langle \omega(s,Y(s)), T_{\partial \Omega}(s) \rangle | < 1,$ it follows from (\ref{correspondence}) and the support assumptions on $a(\Re t, \Im t)$ in (\ref{supportprop})  that for $\eo >0$ small, $a_{\gcal} \in C^{\infty}_{0}(B^*\partial \Omega)$ (ie. has support disjoint from the tangential set $S^*\partial \Omega$).\\
\end{rem}

The proof of proposition \ref{mainthm2prop} is rather technical and to avoid breaking the exposition at this point we defer the proof to section 8.
As an immediate consequence proposition \ref{mainthm2prop} we have the following corollary :

\begin{cor}\label{propcor}
Assume $\Omega$ is a smooth convex bounded domain and  the interior curve $H$ is strictly convex. Let $\chi_{\eo} \in C^{\infty}_{0}(S_{\eo,\pi})$ supported in the strip  $ \{\eo - \delta < |\Im t| < \eo \}$ with $0<\delta < \eo.$ Then, for $\eo >0$ sufficiently small, there exists an order zero pseudodifferential  operator $P(h)$ such that $$h^{-1/2} \int\int_{S_{\eo,\pi} }    e^{-2 \frac{ S(t)}{h} } | u_{h}^{H,\C}(t) |^{2} \, \chi_{\eo}(t)\, dt d\overline{t}  = \langle P(h) \phi_{h}^{\partial \Omega}, \phi_h^{\partial \Omega} \rangle_{L^2},$$
such that the principal symbol $\sigma(P(h))$ satisfies
$$ \int_{B^*\partial \Omega} \sigma(P(h))\gamma^{-1} \, dy d\eta \geq C_0(\Omega,H,\eo) >0, $$ where $\gamma(y,\eta) = \sqrt{1-|\eta|^2}.$
\end{cor}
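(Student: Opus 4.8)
The plan is to deduce Corollary \ref{propcor} directly from Proposition \ref{mainthm2prop} together with the complexified potential layer formula of Proposition \ref{N}. First I would write $u_h^{H,\C}(t) = N^{\C}(h)\phi_h^{\partial\Omega}(q^{\C}(t))$ for $t\in S_{\eo,\pi}$, which holds by (\ref{cpl}) since $\eo<\dist(H,\partial\Omega)$ after possibly shrinking $\eo$. Using this, the weighted $L^2$-integral on the left-hand side becomes a sesquilinear form applied to the boundary trace:
\begin{align*}
h^{-1/2}\int\!\!\int_{S_{\eo,\pi}} e^{-2S(t)/h}\,|u_h^{H,\C}(t)|^2\,\chi_{\eo}(t)\,dt\,d\overline t
&= \big\langle \big(h^{-1/2}N^{\C}(h)^*\,e^{-2S/h}\,\chi_{\eo}\,N^{\C}(h)\big)\phi_h^{\partial\Omega},\,\phi_h^{\partial\Omega}\big\rangle_{L^2(\partial\Omega)}.
\end{align*}
Here $\chi_{\eo}$ is regarded as a multiplication operator on $L^2(H_{\eo}^{\C};e^{-S/h}dt\,d\overline t)$, which is exactly the object $a$ appearing in Proposition \ref{mainthm2prop} once we set $a(t)=\chi_{\eo}(\Im t)$, whose support lies in the strip $\{\eo-\delta<|\Im t|<\eo\}$; since $0<\delta<\eo$, this support avoids any real interval, so the hypotheses of the Proposition (in the form of the first Remark following it) are met for $\eo$ small.

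Next I would invoke Proposition \ref{mainthm2prop} (and its first Remark) to write $h^{-1/2}N^{\C}(h)^*e^{-2S/h}\chi_{\eo}N^{\C}(h) = Op_h(a_{\gcal}) + R(h)$ with $\|R(h)\|_{L^2\to L^2}={\mathcal O}(h)$ and $a_{\gcal}\in C^\infty_0(B^*\partial\Omega)$ given by (\ref{aG}). Setting $P(h):=Op_h(a_{\gcal})+R(h)$, which is an order-zero $h$-pseudodifferential operator with principal symbol $a_{\gcal}$, gives the claimed identity. It then remains to check the positivity of $\int_{B^*\partial\Omega}\sigma(P(h))\gamma^{-1}\,dy\,d\eta$. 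From (\ref{aG}), $\sigma(P(h))(s,\sigma)=\frac{1}{\sqrt2}\chi_{\eo}(\Im t(s,\sigma))\,\kappa_H^{-2}(Y(s))\,|\Im t(s,\sigma)|^{-1}\,\gamma^2(s,\sigma)$, so the integrand $\sigma(P(h))\gamma^{-1}=\frac{1}{\sqrt2}\chi_{\eo}(\Im t)\,\kappa_H^{-2}(Y(s))\,|\Im t|^{-1}\,\gamma(s,\sigma)$ is manifestly nonnegative: $\chi_{\eo}\ge0$, $\kappa_H>0$ by strict convexity of $H$, $|\Im t|^{-1}>0$, and $\gamma(s,\sigma)=\sqrt{1-|\sigma|^2}\ge0$ on $B^*\partial\Omega$.

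To upgrade nonnegativity to a strictly positive lower bound I would argue that the integrand is strictly positive on a set of positive measure. Using the correspondence (\ref{correspondence}), for $\eo$ small the map $(s,\sigma)\mapsto(\Re t(s,\sigma),\Im t(s,\sigma))$ is a diffeomorphism from (an open subset of) $B^*\partial\Omega$ onto a strip in $S_{\eo,\pi}$, and $\gamma(s,\sigma)=\sqrt{1-|\sigma|^2}$ is bounded below on the interior of $B^*\partial\Omega$; since $\chi_{\eo}$ is chosen with $\chi_{\eo}\equiv1$ on a nonempty sub-strip, the integrand is bounded below by a positive constant on an open set of positive Liouville measure. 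Integrating, $\int_{B^*\partial\Omega}\sigma(P(h))\gamma^{-1}\,dy\,d\eta\ge C_0(\Omega,H,\eo)>0$, completing the proof. The main obstacle here is purely bookkeeping: one must make sure the change of variables from $(s,\sigma)$ to $t$ respects the support of $\chi_{\eo}$ and that the Jacobian and the factor $|\Im t|^{-1}$ do not conspire to make the integral vanish — but since all factors are strictly positive on an open set, no cancellation can occur, and the only real content has already been packaged into Proposition \ref{mainthm2prop}.
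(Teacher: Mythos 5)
Your argument is correct and follows the same route as the paper: substitute the complexified potential layer formula $u_h^{H,\C} = N^{\C}(h)\phi_h^{\partial\Omega}\circ q^{\C}$ to convert the weighted $L^2$-integral into the sesquilinear form $\langle h^{-1/2}N^{\C}(h)^*e^{-2S/h}\chi_{\eo}N^{\C}(h)\,\phi_h^{\partial\Omega},\phi_h^{\partial\Omega}\rangle$, then invoke Proposition \ref{mainthm2prop} (with $a(t)=\chi_{\eo}(\Im t)$) to identify the operator as order-zero $h$-pseudodifferential and read off the positivity of $\int \sigma(P(h))\gamma^{-1}$ from the explicit symbol formula (\ref{aG}). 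The paper's proof is a single sentence packaging the same content; your fleshed-out version of the positivity check is the right one.
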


\begin{proof}
Given $P(h) = h^{-1/2}[ e^{-S/h} \chi_{\eo} N^{\C}(h)]^* \cdot [ e^{-S/h} \chi_{\eo} N^{\C}(h)],$  the result follows by an application of the complexified potential layer formula (\ref{holcont3}) and proposition \ref{mainthm2prop}. 
\end{proof} 

 Assuming Proposition \ref{mainthm2prop} for the moment, as discussed in subsection \ref{mainthm2proof}, we claim the proofs of Theorems \ref{mainthm2}  and \ref{mainthm3} follow easily from Theorem \ref{mainthm1} and Corollary \ref{propcor}.  
 \subsubsection{Proof of Theorem \ref{mainthm3}}  \label{eigenfnestimates}
 The lower bound in Theorem \ref{mainthm3} follows from Corollary \ref{propcor} by taking supremum inside the integral.  First, it is clear from the proof of Proposition \ref{mainthm2proof} that  the interval $[\eo/6, 5\eo/6]$ can be replaced by  any interval of the form $I(\eo,\delta):=[\eo - \delta, \eo - \delta/2] $  with $0< \delta < \eo.$ Without loss of generality, we can further assume that  $\chi_{\eo} \in C^{\infty}_{0}( [\eo - 3 \delta, \eo - \delta/3];[0,1])$ with $\chi_{\eo}(\Im t) = 1$ for $\Im t \in I(\eo,\delta).$ Thus,
 \begin{align} \label{thm2proof1}
 h^{-1/2}\max_{ q^{\C}(t) \in H_{\eo}^{\C}} |u_{h}^{H,\C}(t)|^2 \times \Big( \int_{-\pi}^{\pi} \int_{\eo-3\delta}^{\eo - \delta/3} e^{- 2 S(t)/h} dt d\overline{t} \Big)& \geq   h^{-1/2}\int_{S_{\eo,\pi}} e^{-2S(t)/h} \chi_{\eo}(t) |u_{h}^{H,\C}(t)|^2 \, dt d\overline{t} \nonumber \\
 &\sim_{h \to 0^+} \langle P(h) u_{h}^{\partial \Omega}, u_{h}^{\partial \Omega} \rangle_{L^2(\partial \Omega)} \nonumber \\
 &\sim_{h \to 0^+} c_{H,\eo}. \end{align}
 In the last line we have used the QER property of the boundary traces $u_h^{\partial \Omega}$ of the QE sequence of interior eigenfunctions to obtain  $$ \langle P(h) \phi_h^{\partial \Omega}, \phi_h^{\partial \Omega} \rangle \sim_{h \rightarrow 0^+} \int_{B^*\partial \Omega} \sigma(P(h)) \gamma^{-1} \, dy d\eta \geq c_{H,\eo}  >0. $$ Since for $\Im t \in I(\eo,\delta),  S(t) = \Im t + O( (\Im t)^3)$, $d_{\Im t} S(t)  = 1 + O(|\Im t|^2),$  by making the change of variables $(\Re t, \Im t) \to (\Re t, S(t)),$ it follows that
 $$ \int_{-\pi}^{\pi} \int_{\eo-3\delta}^{\eo - \delta/3} e^{-2S(t)/h} dt d\overline{t}   \leq C_{\eo,\delta} h e^{-2S(\eo-3\delta)/h}. $$  Thus, it follows from (\ref{thm2proof1}) that 
 \begin{equation} \label{thm2proof2}
  \max_{ q^{\C}(t) \in H_{\eo}^{\C}} |u_{h}^{H,\C}(t)|  \geq C'_{H,\eo,\delta} h^{-1/4} e^{[ S(\eo-3\delta)]/h}. \end{equation}
  Since one can choose $\delta \in (0, \eo)$ arbitrarily,  the  lower bound in Theorem \ref{mainthm3} follows from (\ref{thm2proof2}) and the polynomial factor $h^{-1/4}$ is irrelevant since it gets absorbed into the exponential.

  As for the upper bound, we simply use the complexified potential layer formula (\ref{holcont3}) and apply Cauchy-Schwarz to get
  \begin{align} \label{thm2proof3}
  |u_{h}^{H,\C}(t)|& \leq (2\pi h)^{-1/2}  \Big| \int_{\partial \Omega} e^{i/h \rho^{\C}(t,s)} a(t,s;h) u_{h}^{\partial \Omega}(s) d\sigma(s) \Big| \nonumber \\
  &\leq C_{H} h^{-1/2} e^{ S(\eo)/h} \| u_{h}^{\partial \Omega} \|_{L^2} \leq C_{H,\eo} h^{-1/2} e^{S(\eo)/h}.& \end{align}
 In the last step, we used the a priori bound $\| u_{h}^{\partial \Omega} \|_{L^2} = O(1)$  combined the fact that $ \max_{( q^{\C},r) \in H_{\eo}^{\C} \times \partial \Omega } e^{- \Im \rho^{\C}(q^{\C},r)/h }  \leq e^{S(\eo)/h}.$ 
The upper bound  for  $\| u_h^{\partial \Omega} \|_{L^2}$ follows from the fact that the boundary restrictions $u_h^{\partial \Omega} = \phi_h |_{\partial \Omega}$ are themselves QE in the sense of (\ref{qer}).   In the Dirichlet case, the Rellich formula gives $\| h \partial_{\nu} \phi_h \|_{L^2(\partial \Omega)} = O(1)$ and so,  the upper bound in (\ref{thm2proof3}) is also $O( h^{-1/2} e^{S(\eo)/h}).$ \qed

 \subsubsection{Proof of Theorem \ref{mainthm2}} From the lower bound in Theorem \ref{mainthm3}, for a real-analytic positively curved $H,$ sufficiently small $\eo>0$  and any $\delta >0,$ it follows that
 \begin{equation} \label{lower}
  \max_{ q^{\C}(t) \in H_{\eo}^{\C}} |u_{h}^{H,\C}(t)|^2  \geq C_{H,\eo,\delta} e^{[2 S(\eo - \delta)]/h},\end{equation}
   it is obvious that any such curve is good in the sense of (\ref{revgood}) and consequently, Theorem \ref{mainthm2} follows from Theorem \ref{mainthm1}. \end{proof}
 We note that the lower bound (\ref{lower}) is much stronger than what is required for Theorem \ref{mainthm2} since it shows that the tube maxima of holomorphic continuations of eigenfunction restrictions actually {\em grow} exponentially in the tube radius.
 
  Also, in regards to (\ref{thm2proof1}), as we have already indicated in subsection \ref{mainthm2proof}, it follows from the Rellich commutator argument in \cite{Bu}  that quantum ergodicity  of the interior eigenfunctions $\phi_h$ imply that the boundary restrictions $\phi_{h}^{\partial \Omega}$  have the analogous quantum ergodic restriction  property  in (\ref{qer}). We note that the last statement is not necessarily true if one replaces $\partial \Omega$ by an arbitrary interior curve, $H.$

Before proving Propositioin \ref{mainthm2prop}, we will need some background on asymptotics of the complexified potential layer operator $N^{\C}(h),$ its relation to the glancing map $Y: \partial \Omega \to H$ and complexification.

\section{Asymptotics for the complexified potential layer operator $N^{\C}(h)$} \label{guts}

To simplify the writing somewhat, we assume throughout this section that $\partial \Omega$ is smooth. The case of boundaries with corners is discussed in the final subsection \ref{corners}.

Abusing notation somewhat we let 
\begin{equation} \label{complexified phase2}
\rho^{\C}(t,s) := \rho^{\C}( q^{\C}(t),r(s))  \end{equation} for $(t,s) \in S_{2\eo,2\pi} \times [-\pi,\pi]$ where the RHS in (\ref{complexified phase2}) is complexified distance function (see (\ref{complexified phase})). We define  the weight function
\begin{equation} \label{Sweight}
S(t):= \max_{s \in [-\pi,\pi]} \Re \,[ i \rho^{\C}(t,s) ],
\end{equation}
one has the following

\begin{lem} \label{complexWKB}
For $q^{\C}(t) \in H^{\C}_{\eo}$ and with  the weight function $S(t)$ in (\ref{Sweight}), there exist $b_j^{\C} (\cdot,s) \in O(S_{2\eo,2\pi} ; C^{\omega}(\R/2\pi \Z) ) ; j \geq 0$ such that
\begin{equation}\label{CWKB}
e^{-S(t)/h}  \cdot  N^{\C}(q^{\C}(t),r(s);h) = (2\pi h)^{-1/2} \exp \, \left(  [ i \rho^{\C}(t,s) - S(t)] /h  \right) \,  \left( \sum_{j=0}^{N} b_{j}^{\C}(t,s) h^{j}  \right) + {\mathcal O}(h^{N+1}).
\end{equation}
\end{lem}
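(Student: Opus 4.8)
\textbf{Proof proposal for Lemma \ref{complexWKB}.}
The plan is to start from the complex WKB asymptotics for the potential layer kernel that was already established in (\ref{complex wkb}), namely
$$ N^{\C}(q^{\C}(\zeta),r(s),h) = (2\pi h)^{-1/2} e^{i\rho^{\C}(q^{\C}(\zeta),r(s))/h} \, a^{\C}(\zeta,s;h), $$
valid for $\zeta$ in a complex neighbourhood $U_{\eo}$ of $\partial C_{\eo}$, with $a^{\C}(\zeta,s;h) \sim \sum_{k\geq 0} a_k^{\C}(\zeta,s) h^k$ and $a_k^{\C}(\cdot,s) \in \ocal(U_{\eo})$. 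The first step is to check that the same expansion in fact holds for $q^{\C}(t)$ ranging over the full Grauert tube $H^{\C}_{\eo}$: this requires only that $\Re\big[ (q^{\C}(t)-r(s))(q^{\C}(t)^*-\overline{r(s)})\big] \gtrsim \eo^2 > 0$ there, which follows as in (\ref{holphase}) since $\dist(H,\partial\Omega) > 0$ and $\eo$ is taken sufficiently small; hence $\rho^{\C}(t,s)$ is holomorphic in $t$ on $S_{2\eo,2\pi}$ and the square-root/Hankel expansion (\ref{Hankel}) continues holomorphically. So the WKB expansion is already in hand; the content of the lemma is purely the extraction of the $e^{S(t)/h}$ factor.

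The second step is the routine algebraic manipulation: multiply both sides of the WKB expansion by $e^{-S(t)/h}$ and write $e^{i\rho^{\C}(t,s)/h} e^{-S(t)/h} = \exp\big( [i\rho^{\C}(t,s) - S(t)]/h \big)$. Then set $b_j^{\C}(t,s) := a_j^{\C}(q^{\C^{-1}}(\cdot),s)$ — more precisely, $b_j^{\C}(t,s)$ is just $a_j^{\C}$ re-expressed in the $t$-parametrization of $H^{\C}_{\eo}$ via $q^{\C}$. Since $S(t) = \max_{s}\Re[i\rho^{\C}(t,s)]$ is independent of the amplitude, the coefficients $b_j^{\C}$ inherit holomorphy in $t \in S_{2\eo,2\pi}$ and real-analyticity (indeed smoothness) in $s \in \R/2\pi\Z$ directly from $a_j^{\C}$; this gives $b_j^{\C}(\cdot,s) \in \ocal(S_{2\eo,2\pi}; C^{\omega}(\R/2\pi\Z))$ as claimed. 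The error term $\ocal(h^{N+1})$ is multiplied by $e^{-S(t)/h}$, which is bounded by $1$ on the relevant range $\{0 \le \Im t \le \eo\}$ since $S(t) = \Im t + O(|\Im t|^3) \ge 0$ there (using (\ref{weight asymptotics}), or directly $S(t) = \Re[i\rho^{\C}(t,s^*)] \ge \Re[i\rho^{\C}(t,s)]|_{s = \Re t\text{-slice}} \ge 0$), so the remainder stays $\ocal(h^{N+1})$ and the displayed identity (\ref{CWKB}) follows.

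There is essentially no hard analytic obstacle here, since the heavy lifting — holomorphic continuation of the kernel and the WKB expansion — was already done in establishing (\ref{complex wkb}) and Proposition \ref{N}. The only point requiring a little care is making sure the WKB expansion is uniform in $s \in [-\pi,\pi]$ and valid on all of $H^{\C}_{\eo}$ (not just near $\partial C_{\eo}$), which is where one invokes the lower bound $\Re\rho^{\C}(t,s)^2 \gtrsim \eo^2$ from convexity of $\Omega$ and $\dist(H,\partial\Omega)>0$; and making the bookkeeping explicit that $S(t)$ is a holomorphic-data-independent weight so it passes through the asymptotic expansion term by term. Thus the lemma is really a normalization statement packaging (\ref{complex wkb}) in the form needed for the operator analysis of $N^{\C}(h)$ in Proposition \ref{mainthm2prop}.
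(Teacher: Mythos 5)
Your overall strategy matches the paper's: Lemma~\ref{complexWKB} is essentially a repackaging of the complex WKB expansion (\ref{complex wkb}) (valid on the full tube by Proposition~\ref{N}) after multiplication by $e^{-S(t)/h}$, with $b_j^{\C}$ just the $a_j^{\C}$ re-expressed in the $t$-parametrization. Your identification of the coefficients and their regularity is fine. But the justification you give for the $\ocal(h^{N+1})$ remainder is the wrong inequality, and as stated the step does not close.

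You argue that the error term is ``multiplied by $e^{-S(t)/h}$, which is bounded by $1$'' because $S(t)\ge 0$. That controls the factor $e^{-S/h}$ in isolation, but it is not what you need. Writing out the truncation error from (\ref{complex wkb}), after multiplying by $e^{-S/h}$ it has the form
\[
(2\pi h)^{-1/2}\, e^{\,[\,i\rho^{\C}(t,s)-S(t)\,]/h}\,\bigl(a^{\C}-\textstyle\sum_{j\le N}a_j^{\C}h^j\bigr),
\]
whose modulus is $(2\pi h)^{-1/2}\,e^{\,[\Re(i\rho^{\C}(t,s))-S(t)]/h}\,\ocal(h^{N+1})$. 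The factor $e^{i\rho^{\C}/h}$ from the kernel has modulus $e^{\Re(i\rho^{\C})/h}$, which is exponentially \emph{large} in general (for instance it equals $e^{S(t)/h}$ at the maximizing $s$), so $e^{-S/h}\le 1$ alone is no help. What actually kills the growth is the inequality
\[
\Re\bigl(i\rho^{\C}(t,s)\bigr)-S(t)\ \le\ 0 \qquad \text{for all } (t,s)\in S_{2\eo,2\pi}\times[-\pi,\pi],
\]
which holds trivially because $S(t)$ is \emph{defined} in (\ref{Sweight}) as $\max_{s}\Re\bigl(i\rho^{\C}(t,s)\bigr)$. This is precisely the one-line observation in the paper's proof. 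You do cite the max characterization in passing, but you deploy it only to conclude $S\ge 0$, which is strictly weaker and does not control the $e^{i\rho^{\C}/h}$ factor. Replace ``$e^{-S/h}\le1$ since $S\ge0$'' by ``$|e^{[i\rho^{\C}-S]/h}|\le1$ since $\Re(i\rho^{\C})\le S$ by definition of $S$,'' and the argument is correct.
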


\begin{proof} The lemma is an immediate consequence of Proposition \ref{N} and (\ref{complex wkb}) since
$$ - S(t) + \Re ( i \rho^{\C}(t,s)) \leq 0, \,\, (t,s) \in S_{2\eo,2\pi} \times [-\pi,\pi].$$ \end{proof}

The main step in the proof of Proposition \ref{mainthm2prop} is an analysis of  the asymptotics of the composite operators $P(h):C^{\infty}(\partial \Omega) \rightarrow C^{\infty}(\partial \Omega),$ where
$$ P(h) = h^{-1/2} [ e^{-S/h} \chi_{\eo} N^{\C}(h)]^* \cdot [ e^{-S/h} \chi_{\eo} N^{\C}(h)]. $$
For this, one needs a detailed analysis of the complex phase function on the right hand side of (\ref{CWKB}). We begin with

\subsection{Asymptotic expansion of $\rho^{\C}(t,s)$.}

 Let $T_{H}(s) = d_s q(s)$ be the unit tangent to $H$ and  $\nu_{H}(s)$ the unit outward normal to $H$. Throughout the paper, $\kappa_H(s)$  denotes the scalar curvature of $H$. In the following, it will be useful
  to define the relative displacement vector
  $$ \omega(s, \Re t)  :=  \frac{q(\Re t) - r(s)}{|q(\Re t) -r(s)|}.$$
   From the Frenet-Serret formulas, we get that for $\eo >0$ small,  the holomorphic continuation $q^{\C}$ of the parametrization $q$ of $H$ satisfies for $|\Im t | \leq \eo,$
\begin{equation} \label{taylor1} \begin{array}{ll}
  q^{\C}(\Re t + i \Im t) - r(s) = q(\Re t) - r(s) +  i \Im t  \,  T_H(\Re t) - \frac{1}{2} \kappa_H(\Re t) | \Im t| ^{2} \nu_H(\Re t) \\ \\
 - \frac{i}{6} (\Im t)^{3}  [ \kappa_H'(\Re t) \nu_H(\Re t) - \kappa_H^{2}(\Re t) T_H(\Re t)] \, + {\mathcal O}( |\Im t |^{4}). \end{array} \end{equation}
Similarly, when $|t-s| \leq  \eo,$ one also has the expansion
\begin{equation} \label{Taylor2} \begin{array}{ll}
  q^{\C}(\Re t + i \Im t) -  q(s) = (\Re t + i \Im t -s) T_H(s) + \frac{1}{2} \kappa_H(s) ( \Re t  + i \Im t -s)^{2} \nu_H(s) \\ \\
 + \frac{1}{6} [ \kappa_H'(s) \nu_H(s) - \kappa_H^{2}(s) T_H(s)] \, ( \Re t + i\Im t -s)^{3} + {\mathcal O}( |\Re t + i\Im t -s|^{4}). \end{array} \end{equation}
Both (\ref{taylor1}) and (\ref{Taylor2}) will be useful at different points in our analysis;  the former when determining growth of functions in $\Im t$ and the latter when estimating joint growth in $\Re t -s$ and $\Im t$.

Let  $\langle, \rangle: \C \times \C \rightarrow \C$ be the standard complex bilinear extension of the Cartesian inner product on $\R \times \R.$ A direct computation using (\ref{taylor1}) gives
\begin{align} \label{imag} 
 \nonumber&\Im \rho^{\C}(t,s) \\ \nonumber &= \langle \omega(s,\Re t), T_{H}(\Re t) \rangle  \, (\Im t)  - \Big( \frac{1}{6} \langle \kappa_{H}'(\Re t) \nu_{H}(\Re t) - \kappa_H^{2}(\Re t) T_{H}(\Re t), \omega(s,\Re t) \rangle \\ \nonumber
&\,\,\,\,\,\,\,- \frac{1}{2} \kappa_{H}(\Re t) \langle \nu_{H}(\Re t), \omega(s,\Re t) \rangle
+   \frac{1}{2} \langle \omega(s,\Re t), T_{H}(\Re t) \rangle  \,  |q(\Re t) - r(s)|^{-2} \\ 
&\,\,\,\,\,- \frac{1}{2} |q(\Re t) - r(s)|^{-2}  \langle \omega(s,\Re t), T_{H}(\Re t) \rangle^{3}  \, \Big)\, (\Im t)^{3} + {\mathcal O}(|\Im t|^{5}).
 \end{align}
It  follows that at a critical point $ s =s^*(t)$ of $\Im \rho^{\C}(t,s)$
\begin{equation} \label{term.9}
 \partial_{s} \Im \rho^{\C}(t,s^*(t)) = 0, \end{equation}
and when $\Im t \neq 0,$ we have
\begin{equation} \label{term1}
\langle \partial_s \omega(s^*(t),\Re t), T_H(\Re t) \rangle + {\mathcal O}(|\Im t|^{2}) = 0.
\end{equation}
Moreover, when equation (\ref{term1}) is satisfied, we have

\begin{lem} \label{weight1}
Let  $t \in [-\pi,\pi] + i [ \frac{\eo}{2}, \eo]$  solve the critical point equation in (\ref{term1}). Then, for $\eo >0$ sufficiently small,
$$ | \langle T_{H}(\Re t), \omega(s^*(t),\Re t) \rangle | =1 + {\mathcal O}(|\Im t|^{2}).$$
\end{lem}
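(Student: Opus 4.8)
The plan is to exploit that $\omega(s,\Re t)$ is a \emph{unit} vector in the plane, so that $\partial_s\omega$ is forced to be proportional to the rotated vector $\omega^{\perp}$ and, crucially, never vanishes because $q(\Re t)\in H$ is an interior point of the convex domain $\Omega$. Since $|\omega(s,\Re t)|\equiv 1$, differentiating in $s$ gives $\langle\partial_s\omega,\omega\rangle=0$, so in $\R^2$ we may write $\partial_s\omega(s,\Re t)=\beta(s,\Re t)\,\omega^{\perp}(s,\Re t)$ with $\beta:=\langle\partial_s\omega,\omega^{\perp}\rangle$, where $\omega^{\perp}$ is the $\pi/2$-rotation of $\omega$. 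Decomposing $T_H(\Re t)=\langle T_H,\omega\rangle\,\omega+\langle T_H,\omega^{\perp}\rangle\,\omega^{\perp}$ and using $\langle T_H,\omega\rangle^2+\langle T_H,\omega^{\perp}\rangle^2=1$, the critical-point relation $(\ref{term1})$ becomes
$$\beta(s^*(t),\Re t)\,\langle T_H(\Re t),\omega^{\perp}(s^*(t),\Re t)\rangle={\mathcal O}(|\Im t|^2).$$

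The heart of the argument is then to show that $|\beta|$ is bounded below by a fixed positive constant, uniformly for $(\Re t,s)\in[-\pi,\pi]^2$. Writing $v(s)=q(\Re t)-r(s)$, so $v'(s)=-r'(s)$ with $|r'|=1$, one computes $\partial_s\omega=|v|^{-1}\big(v'-\langle\omega,v'\rangle\,\omega\big)$ and hence $\beta=-|v|^{-1}\langle r'(s),\omega^{\perp}\rangle$. This vanishes precisely when $r'(s)$ is parallel to $\omega(s,\Re t)$, i.e. when the segment from $r(s)$ to $q(\Re t)$ lies along the tangent line to $\partial\Omega$ at $r(s)$; but since $\Omega$ is convex that tangent line is a supporting line of $\Omega$ and cannot contain the interior point $q(\Re t)\in H\subset\mathring\Omega$. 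Hence $\beta(s,\Re t)\neq 0$ for all $(\Re t,s)$, and continuity together with compactness of $[-\pi,\pi]^2$ yield $c_0>0$ with $|\beta|\geq c_0$; the bound $|v|\geq\dist(H,\partial\Omega)>0$ also keeps $|\beta|$ from blowing up.

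Finally, since $s^*(t)\in[-\pi,\pi]$ the uniform lower bound on $\beta$ lets us divide the displayed relation by $\beta(s^*(t),\Re t)$, yielding $\langle T_H(\Re t),\omega^{\perp}(s^*(t),\Re t)\rangle={\mathcal O}(|\Im t|^2)$; therefore $\langle T_H(\Re t),\omega(s^*(t),\Re t)\rangle^2=1-{\mathcal O}(|\Im t|^4)$, and taking square roots (valid for $\eo$ small) gives $|\langle T_H(\Re t),\omega(s^*(t),\Re t)\rangle|=1+{\mathcal O}(|\Im t|^2)$, as claimed.

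The only step that is more than bookkeeping is the uniform lower bound on $|\beta|$, which rests on the elementary geometric fact that a chord through an interior point of a convex domain crosses $\partial\Omega$ transversally; this is the single place where convexity of $\Omega$, standing throughout this section, is genuinely invoked, and some care is needed to make the bound uniform in $(\Re t,s)$ so that it applies at the critical point $s^*(t)$.
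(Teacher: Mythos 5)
Your proof is correct and takes essentially the same route as the paper: you compute $\partial_s\omega$, factor the critical-point relation into a nonvanishing scalar $\beta$ times $\langle T_H,\omega^\perp\rangle$ (the paper phrases the same dichotomy via the cosine addition formula rather than the orthonormal decomposition along $\omega,\omega^\perp$), and then rule out the unwanted alternative by the convexity of $\Omega$ and the interiority of $H$. A minor point in your favor: you make explicit the uniform lower bound $|\beta|\geq c_0>0$ needed to divide through in the \emph{approximate} relation (\ref{term1}) so that the ${\mathcal O}(|\Im t|^2)$ error actually propagates to the conclusion, a quantitative step the paper's proof leaves implicit after establishing the dichotomy only for the exact equation $\partial_s\langle\omega,T_H\rangle=0$.
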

\begin{proof}
Carrying out the $s$-differentiation gives
\begin{align} \label{diff formula} 
\nonumber & \langle \partial_sd (s,\Re t), T_{H}(\Re t) \rangle  \\
&=|q(\Re t)-r(s)|^{-1}  \Big(  \langle T_{\partial \Omega}(s), T_{H}(\Re t) \rangle  
 - \langle T_{\partial \Omega}(s), \omega(s, \Re t) \rangle \cdot \langle T_{H}(\Re t), \omega(s,\Re t) \rangle \, \Big), \end{align}
 where $T_{\partial \Omega}(s) = d_{s} r(s)$ the unit tangent to $\partial \Omega$.\\
Since $|T_{H}(\Re t)| = |T_{\partial \Omega}(s)| = |\omega(s,\Re t)| =1$, it follows from (\ref{diff formula}) and the cosine law $\cos (\theta_1 + \theta_2) = \cos \theta_1 \cos \theta_2 - \sin \theta_1 \sin \theta_2$ that $\partial_s \langle d, T_H \rangle = 0$ if and only if either
$$(i) \,\,\,| \langle T_{\partial \Omega}(s), \omega(s,\Re t) \rangle | = 1$$
or $$ (ii)\,\,\,| \langle T_{H}(\Re t), \omega(s,\Re t) \rangle | =1.$$ The identity (i) is never satisfied since $H$ is by assumption an interior   curve and $\partial \Omega$ is convex, so it is supported by the tangent line at each point of the boundary.  As a result, (ii) must hold and this finishes the proof.\end{proof}
Given, its geometric significance, in view of Lemma \ref{weight1} it makes sense to single out the points $s=s(\Re t)$ which solve the approximate critical point equation
\begin{equation} \label{approxcrit}
\langle T_{H}(\Re t), \omega(s(\Re t),\Re t) \rangle  = - 1. \end{equation}
Geometrically, $q(s(\Re t)) \in \partial \Omega$ is  the boundary intersection of the billiard trajectory in $\Omega$ that tangentially glances $H \subset \Omega$ at $q(\Re t)$. By convexity there are two such points on the boundary and the condition  $\langle \omega(s,\Re t), T_{H}(\Re t) \rangle = -1$  uniquely specifies the point.

\begin{rem} In the next section, we improve the result in Lemma \ref{weight1} and show that in fact
$| \langle T_H(\Re t), \omega(s(\Re t), \Re t) \rangle | = 1 + {\mathcal O}(|\Im t|^4)$ which implies that the holomorphic continuation $s(t)$ of the  geometric solution of (\ref{approxcrit}) agrees to ${\mathcal O}(|\Im t|^5)$-error with the exact critical point $s^*(t)$ in (\ref{term1}). We then use this fact to determine the asymptotics of the weight $S(t)$ to ${\mathcal O}(|\Im t|^5)$-accuracy.
\end{rem}

\subsection{Glancing sets relative to $H$}

We start by defining the  glancing set (and associated glancing map) relative to $H$.  The real part of  the complex phase $ \Re i \rho^{\C}(t,s)$ attains an approximate maximum at $s= Y^{-1}(t)$ where, $Y^{-1}$  denotes the inverse glancing map (see (\ref{grazing}) below). As we show  in (\ref{exact}) below, modulo ${\mathcal O}(|\Im t|^5)$-error terms,  the weight function $S(t)$  equals  $\Re i \rho^{\C}(t,Y^{-1}(t))$. The points $Y^{-1}(t)$ have a simple geometric characterization in terms of  glancing sets relative to $H$, which we now describe. Unless specified otherwise, when $t$ is complex we assume in the following that $\Im t \geq 0$.

\begin{lem} \label{fact1}
For fixed $s \in [-\pi,\pi ]$ let $Y(s)$ be a solution of $T_{H}(\cdot ) = - \omega(s, \cdot )$. Then, the map $Y: [-\pi,\pi ] \rightarrow [-\pi,\pi]$ defined by $s \mapsto Y(s)$
induces a real-analytic diffeomorphism of $H$ with $\partial \Omega.$ By an abuse of notation, we also denote the latter map by $Y.$ 
\end{lem}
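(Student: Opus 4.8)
The plan is to analyze the equation $T_H(Y(s)) = -\omega(s, Y(s))$ geometrically, exactly as in the preceding remarks about the glancing trajectory. Fix $s \in [-\pi,\pi]$ and consider the point $r(s) \in \partial\Omega$. For each point $q(u) \in H$ the unit vector $\omega(s,u) = (q(u)-r(s))/|q(u)-r(s)|$ points from the boundary toward $H$; the condition $T_H(u) = -\omega(s,u)$ says exactly that the segment from $r(s)$ to $q(u)$ meets $H$ tangentially at $q(u)$ (with a fixed orientation). First I would establish existence: since $H$ is strictly convex and interior to the convex domain $\Omega$, through the exterior point $r(s)$ there pass exactly two tangent lines to $H$; choosing the one consistent with the orientation convention $\langle T_H, \omega\rangle = -1$ (rather than $+1$) selects a unique tangency point, which we call $q(Y(s))$. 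This defines $Y(s)$ as a function of $s$.

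Next I would prove $Y$ is real-analytic. The cleanest route is the implicit function theorem applied to $G(s,u) := \langle T_H(u), \omega(s,u)\rangle + 1$, which vanishes at $(s, Y(s))$. One computes $\partial_u G$ using the Frenet–Serret formula $\partial_u T_H(u) = \kappa_H(u)\nu_H(u)$ together with $\partial_u \omega(s,u) = |q(u)-r(s)|^{-1}(T_H(u) - \langle T_H(u),\omega(s,u)\rangle\,\omega(s,u))$; at a point where $\langle T_H, \omega\rangle = -1$ (so $T_H = -\omega$) the second term drops out and one is left with $\partial_u G = \kappa_H(u)\langle \nu_H(u), \omega(s,u)\rangle = -\kappa_H(u)\langle \nu_H(u), T_H(u)\rangle$... which vanishes. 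So $\partial_u G = 0$ at the solution and the naive IFT fails — this is the main obstacle, and it is why the condition must be read as a tangency (double) condition. The fix is to differentiate once more, or equivalently to parametrize $Y$ directly: the tangent line to $H$ at $q(u)$ is $\{q(u) + \tau T_H(u)\}$, and $Y(s)$ is characterized by the requirement that $r(s)$ lie on this line, i.e. $r(s) = q(u) + \tau(s,u) T_H(u)$ for some $\tau$; projecting onto $\nu_H(u)$ gives $\langle r(s) - q(u), \nu_H(u)\rangle = 0$, and now differentiating this in $u$ yields $-\langle T_H(u),\nu_H(u)\rangle + \langle r(s)-q(u), \kappa_H(u) T_H(u)\rangle = \kappa_H(u)\,\tau(s,u)$, which is nonzero for $s \ne Y^{-1}$-type degeneracies since $\kappa_H > 0$ and $\tau(s,u) \ne 0$ (as $r(s) \in \partial\Omega$ is strictly outside $H$). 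Hence by the implicit function theorem applied to $h(s,u) := \langle r(s) - q(u), \nu_H(u)\rangle$, the function $Y$ is real-analytic on $[-\pi,\pi]$.

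Finally I would show $Y$ is a diffeomorphism onto $\partial\Omega$. By the same implicit relation, $s \mapsto Y(s)$ has nonvanishing derivative: $d_s Y = -\partial_s h/\partial_u h = -\langle T_{\partial\Omega}(s),\nu_H(Y(s))\rangle / (\kappa_H(Y(s))\tau(s,Y(s)))$, and the numerator is nonzero because $\partial\Omega$ is strictly convex so its tangent never coincides with the given tangent line to $H$ at an interior point (this is the same convexity argument used to rule out alternative $(i)$ in Lemma \ref{weight1}). Thus $Y$ is a local $C^\omega$-diffeomorphism. Since both $H$ and $\partial\Omega$ are circles (diffeomorphic to $\R/2\pi\Z$), a local diffeomorphism that is globally defined and, by strict convexity of $H$, injective (each boundary point $r(s)$ determines its glancing point on $H$ uniquely given the orientation, and conversely each $q \in H$ has exactly one glancing boundary point on the chosen side), is a global $C^\omega$-diffeomorphism of $H$ with $\partial\Omega$. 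I expect the degeneracy of the first-order condition noted above to be the only real subtlety; everything else is a routine application of strict convexity of $\Omega$ and $\kappa_H > 0$.
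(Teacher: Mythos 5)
Your proof is correct and follows essentially the same route as the paper: after noting that the normalized condition $\langle T_H,\omega\rangle=-1$ is degenerate to first order (which the paper flags in a single sentence), you pass, as the paper does, to the unnormalized glancing condition $\langle r(s)-q(u),\nu_H(u)\rangle=0$, apply the implicit function theorem with $\kappa_H>0$ giving the $\partial_u$-nondegeneracy (the paper's (\ref{**})) and the interior-plus-convexity argument giving the $\partial_s$-nondegeneracy (the paper's (\ref{***})), and then invoke strict convexity of $H$ for global bijectivity. The only cosmetic overstatement is invoking \emph{strict} convexity of $\Omega$; plain convexity suffices for the transversality of the glancing line with $\partial\Omega$, exactly as in the paper.
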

\begin{proof} The equation $| \langle T_{H}(\Re t), \omega(s, \Re t) \rangle | = 1$ is equivalent to
\begin{equation} \label{criteqn}
\langle \nu_{H}(\Re t), q(\Re t) - r(s) \rangle = 0. \end{equation}

Unlike the defining equation in (\ref{approxcrit}),  $(\ref{criteqn})$ has the advantage of being non-degenerate in $\Re t$. Indeed, differentiating the left hand side of $(\ref{criteqn})$ with respect to $\Re t$ yields
$$ \kappa_{H}(\Re t) \langle T_{H}(\Re t), q(\Re t) - r(s) \rangle + \langle \nu_{H}(\Re t) , T_{H}(\Re t) \rangle = \kappa_{H}(\Re t) \langle T_{H}(\Re t), q(\Re t) - r(s) \rangle $$
Evaluating the last expression on the right hand side at $\Re t=Y(s)$, implies that
\begin{equation}\begin{array}{ll} \label{**}
\left| \, \partial_{\Re t}  \langle \nu_{H}(\Re t), q(\Re t) - r(s) \rangle \, \right| |_{\Re t = Y(s)} \\ \\ = \kappa_{H}(Y(s)) \, |q(Y(s)) - r(s)| \geq \min_{p \in H} \kappa_{H}(p) \cdot dist (H, \partial \Omega) >0, \end{array}\end{equation}
given that $\kappa_H >0.$ Similarily,
\begin{equation}\begin{array}{ll} \label{***}
\left| \, \partial_{s}  \langle \nu_{H}(\Re t), q(\Re t) - r(s) \rangle \, \right| |_{\Re t = Y(s)} \\ \\ = \left|  \langle \nu_H(\Re t), T_{\partial \Omega}(s) \rangle \right| \geq C(H, \partial \Omega) >0, \end{array}\end{equation}
since $H$ is interior and $\Omega$ is convex. From $(\ref{**})$, the implicit function theorem gives two analytic solution curves $ \Re t \mapsto s_{\pm}(\Re t)$ solving $\langle T_{H}(\Re t), \omega(s_{\pm}(\Re t), \Re t \rangle = \pm 1.$
In view of (\ref{***}), there are two smooth solution curves $s \mapsto Y_{\pm}(s)$ solving $\langle T_{H}(Y_{\pm}(s)), \omega(s, Y_{\pm}(s) \rangle = \pm 1$.  We  choose here $Y(s) = Y_{-}(s)$. In the case where $\Im t < 0$, one chooses $Y(s) = Y_{+}(s)$.  The mapping $Y: \R/2\pi \Z \rightarrow \R/ 2\pi \Z$ is clearly bijective due to the positive curvature of $H$. \end{proof}

 \begin{defn} \label{grazing} Let $\Omega$ be a smooth, bounded convex planar domain and $H$ a strictly convex $C^{\omega}$ curve with $H \cap \partial \Omega = \emptyset.$ We define the  {\em glancing set relative to $H$}  for the billiard flow in $\Omega $ to be the set
 $$ \Sigma  := \{ ( r(s), q(\Re t)) \in \partial \Omega \times H; \,  T_{H}(\Re t) = -  \, \omega(s,\Re t) \}.$$ 
 The associated glancing map $Y: \partial \Omega \to H$ is defined implicitly as the unique  solution of the equation
 $$\langle T_{H}(Y(s)), \omega(s, Y(s)) \rangle = - 1.$$
 In view of Lemma \ref{fact1} it is a global $C^{\omega}$ diffeomorphism of $\partial \Omega$ with $H.$
 \end{defn}
There are several elementary facts about $\Sigma$ that will be needed later on when estimating the various $h$-microlocal pieces of the $N^{\C}(h)$-operator in the course of proving Proposition \ref{mainthm2prop}. The first observation is that in view of Lemma \ref{fact1},
\begin{equation} \label{sigma2}
\Sigma = \{ (r(s), q(Y(s))) \in \partial \Omega \times H \} \end{equation}
is a $C^{\omega}$-graph over $\partial \Omega$ in the product manifold.
Moreover, one also has the following useful fact:
\begin{lem} \label{mixed hessian}
Assume that $H \subset \Omega$ is an interior curve and that $\partial \Omega$ is smooth and convex. Then,
$$\Sigma   \subset  \{ (r(s), q(\Re t)) \in \partial \Omega \times H; \, \partial_{\Re t} \partial_{s}\rho (s,\Re t) = 0  \}.
$$
\end{lem}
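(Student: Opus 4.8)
The plan is to reduce the statement to a short first-order computation of the mixed partial of the Euclidean distance function, essentially the computation already carried out in the proof of Lemma~\ref{weight1}. For real arguments $\rho(s,\Re t) = |q(\Re t) - r(s)|$, and since $H$ is an interior curve we have $|q(\Re t)-r(s)| \geq \dist(H,\partial\Omega) > 0$ on the whole parameter region, so $\rho$ is real-analytic there; in particular its mixed second partials commute and the differentiation formulas below are valid.

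First I would compute $\partial_s\rho$. Writing $v := q(\Re t)-r(s)$ and $\omega(s,\Re t) := v/|v|$, and using that $r$ is the arclength parametrization of $\partial\Omega$, so $\partial_s v = -T_{\partial\Omega}(s)$, one gets $\partial_s\rho(s,\Re t) = -\langle \omega(s,\Re t), T_{\partial\Omega}(s)\rangle$. Differentiating in $\Re t$ with $\partial_{\Re t}\omega = |v|^{-1}\bigl(q'(\Re t) - \omega\,\langle\omega,q'(\Re t)\rangle\bigr)$ yields
\[
\partial_{\Re t}\partial_s\rho(s,\Re t) = -\,|v|^{-1}\Bigl(\langle q'(\Re t),\,T_{\partial\Omega}(s)\rangle - \langle\omega(s,\Re t),\,q'(\Re t)\rangle\,\langle\omega(s,\Re t),\,T_{\partial\Omega}(s)\rangle\Bigr).
\]
Up to the nonvanishing scalar $|q'(\Re t)|$, this is exactly the quantity appearing on the right-hand side of (\ref{diff formula}), so no genuinely new computation is needed; it remains only to evaluate it on $\Sigma$.

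The last step is to substitute the defining relation of the glancing set. On $\Sigma$ one has $T_H(\Re t) = -\omega(s,\Re t)$, hence $q'(\Re t) = -|q'(\Re t)|\,\omega(s,\Re t)$ and $\langle\omega(s,\Re t),\,q'(\Re t)\rangle = -|q'(\Re t)|$. Plugging these in, the parenthesis above becomes $-|q'(\Re t)|\langle\omega,T_{\partial\Omega}(s)\rangle - \bigl(-|q'(\Re t)|\bigr)\langle\omega,T_{\partial\Omega}(s)\rangle = 0$, so $\partial_{\Re t}\partial_s\rho = 0$ at every point of $\Sigma$, which is the asserted inclusion. Equivalently, one may rewrite the right-hand side via the cosine law as in Lemma~\ref{weight1}: it equals $-|q'(\Re t)|\,|v|^{-1}\sin\alpha\sin\beta$, where $\alpha$ and $\beta$ are the angles that $\omega(s,\Re t)$ makes with $T_H(\Re t)$ and with $T_{\partial\Omega}(s)$, and on $\Sigma$ one has $\alpha = \pi$, so $\sin\alpha = 0$. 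Geometrically this is the classical fact that the mixed Hessian of the distance between two curves vanishes exactly when the segment joining the footpoints is tangent to one of them --- here, by definition of $\Sigma$, the glancing segment is tangent to $H$ at $q(\Re t)$.

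I do not anticipate any real obstacle: this is a two-line calculation. The only points worth a sentence of care are (i) that $\rho$ is smooth on the region in question, guaranteed by $\dist(H,\partial\Omega)>0$, so that $\partial_{\Re t}\partial_s\rho = \partial_s\partial_{\Re t}\rho$; and (ii) the normalization of the parametrization $q$ of $H$, which enters only through the positive factor $|q'(\Re t)|$ and so is irrelevant to the vanishing. If one prefers, the entire argument collapses to quoting (\ref{diff formula}) --- which is $\partial_s\partial_{\Re t}\rho$ up to a nonvanishing factor --- and noting that its two summands coincide on $\Sigma$.
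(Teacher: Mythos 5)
Your proof is correct and takes essentially the same route as the paper's one-line argument: both reduce the lemma to the identity $\partial_s\partial_{\Re t}\rho = \langle\partial_s\omega, T_H\rangle$ (up to a nonvanishing scalar) and then substitute the glancing condition $T_H = -\omega$; you simply spell out the computation that the paper leaves implicit via (\ref{diff formula}). You might note that the vanishing can be seen even more quickly: since $|\omega| \equiv 1$, one has $\partial_s\omega \perp \omega$, so $\langle\partial_s\omega, T_H\rangle = -\langle\partial_s\omega, \omega\rangle = 0$ on $\Sigma$ with no further computation needed.
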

\begin{proof} This follows from the formula
$$\partial_s \partial_{\Re t} \rho(s,\Re t) =   \langle \partial_s \omega(s,\Re t), T_{H}(\Re t) \rangle.$$
\end{proof}

We denote the canonical transformation induced by the  diffeomorphism $s \mapsto Y(s)$
by
\begin{equation} \label{canonical} \begin{array}{ll}
\zeta_{H}: T^* \partial \Omega \rightarrow T^*H, \\ \\  \zeta_H(s,\sigma) = (y,\eta); \,\,
 y = Y(s), \eta = d_s Y(s)^{-1} \sigma. \end{array} \end{equation}





\subsection{Taylor expansion of $\rho^{\C}(t,s)$ around glancing points.}

Before analyzing the composite operator $N^{\C*}(h) a N^{\C}(h),$ we collect here some  asymptotic formulas for the real and imaginary parts of $\rho^{\C}(t,s)$
 which are useful when $|t - Y(s) | \ll 1$.

 \begin{lem} \label{asymptotic}
 Let $(t,Y(s)) \in ([-\pi,\pi] + i[\frac{\eo}{2}, \eo]) \times [-\pi,\pi]$ where $Y:[-\pi,\pi] \rightarrow [-\pi, \pi]$ is the diffeomorphism in  Lemma \ref{fact1}. Then for  $| t - Y(s)| \leq \eo$ and $\eo >0$ sufficiently small,
 \small{$$  \Re \rho^{\C}(t,s) = |q(Y(s))- r(s)| - (\Re t -Y(s)) \, \left( 1 + \frac{1}{2} \kappa_H^2(Y(s)) |\Im t|^{2} \right) +  \max_{\alpha + \beta = 4}  {\mathcal O}(|\Re t -Y(s)|^{\alpha} |\Im t|^{\beta}),$$}
$$ \Im \rho^{\C}(t,s) = - \Im t  + \frac{\kappa_H(Y(s))}{2} (\Re t -Y(s))^{2} \Im t - \frac{1}{6} \kappa_H^{2}(Y(s)) (\Im t)^{3} + \max_{\gamma + \delta = 5} {\mathcal O}(|\Re t -Y(s)|^{\gamma} |\Im t|^{\delta}).$$
\end{lem}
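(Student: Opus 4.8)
The plan is to derive both expansions by substituting the Frenet--Serret Taylor expansion (\ref{Taylor2}) — which is the one adapted to the regime $|t-Y(s)|\ll 1$ — into the defining formula
$$\rho^{\C}(t,s)^2 = \langle q^{\C}(t)-r(s),\, q^{\C}(t)^*-\overline{r(s)}\rangle,$$
and then take the (positive-real-part) square root. Write $w = \Re t - Y(s) + i\,\Im t$, so that $q^{\C}(t) - q(Y(s)) = w\,T_H(Y(s)) + \tfrac12\kappa_H(Y(s))w^2\nu_H(Y(s)) + \tfrac16[\kappa_H'\nu_H - \kappa_H^2 T_H](Y(s))\,w^3 + {\mathcal O}(|w|^4)$. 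The key geometric input is the glancing identity from Definition \ref{grazing} and Lemma \ref{fact1}: at $\Re t = Y(s)$ one has $\langle T_H(Y(s)),\omega(s,Y(s))\rangle = -1$, equivalently $q(Y(s))-r(s) = -|q(Y(s))-r(s)|\,T_H(Y(s))$ and $\langle \nu_H(Y(s)),\,q(Y(s))-r(s)\rangle = 0$ (equation (\ref{criteqn})). First I would record $q^{\C}(t) - r(s) = \big(q(Y(s))-r(s)\big) + \big(q^{\C}(t)-q(Y(s))\big)$ and expand the bilinear form $\langle\,\cdot\,,\,\cdot^*\rangle$; the vanishing of $\langle\nu_H,\,q(Y(s))-r(s)\rangle$ kills the cross term between the leading $q(Y(s))-r(s)$ piece and the $\tfrac12\kappa_H w^2\nu_H$ piece, which is exactly what makes the coefficient of $(\Re t - Y(s))$ in $\Re\rho^{\C}$ come out to $-(1 + \tfrac12\kappa_H^2|\Im t|^2)$ rather than something with a linear-in-$\Im t$ term.

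The second step is the bookkeeping of the square root. Set $\ell := |q(Y(s))-r(s)|$. From the expansion one gets $\rho^{\C}(t,s)^2 = \ell^2 - 2\ell\, w + (\text{quadratic and higher in } w)$, where the quadratic-in-$w$ coefficient is $\langle T_H,T_H\rangle - \ell\,\kappa_H\langle\nu_H, T_H\rangle + (\text{curvature terms}) = 1 + {\mathcal O}(\ell\,\kappa^2)$-type expressions coming from the $w^2$ and from the product of the two linear-in-$w$ contributions in the two factors. Then $\rho^{\C}(t,s) = \ell\sqrt{1 - 2w/\ell + \cdots} = \ell - w + (\text{quadratic in }w) + (\text{cubic}) + {\mathcal O}(|w|^4)$, using $\sqrt{1+x} = 1 + \tfrac12 x - \tfrac18 x^2 + \cdots$. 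One now separates real and imaginary parts with $w = (\Re t - Y(s)) + i\,\Im t$. The cubic-in-$w$ and lower terms, re-expanded in the two real variables $\Re t - Y(s)$ and $\Im t$, produce precisely the listed monomials; I would verify that the only quadratic monomial that survives in $\Re\rho^{\C}$ with a nonzero coefficient through order $|w|^3$ is $(\Re t - Y(s))|\Im t|^2$ with coefficient $\tfrac12\kappa_H^2(Y(s))$, and for $\Im\rho^{\C}$ the monomials $\Im t$ (coefficient $-1$), $(\Re t-Y(s))^2\,\Im t$ (coefficient $\tfrac12\kappa_H(Y(s))$, as written — note the curvature appears to the first power here), and $(\Im t)^3$ (coefficient $-\tfrac16\kappa_H^2(Y(s))$). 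Everything else is absorbed into the stated error terms $\max_{\alpha+\beta=4}{\mathcal O}(|\Re t - Y(s)|^\alpha|\Im t|^\beta)$ and $\max_{\gamma+\delta=5}{\mathcal O}(\cdots)$, which is legitimate because $q^{\C}$ is real analytic and the remainder in (\ref{Taylor2}) is ${\mathcal O}(|w|^4)$ uniformly, while $\Im\rho^{\C}$ is odd in $\Im t$ (since $\rho^{\C}(\bar t, s) = \overline{\rho^{\C}(t,s)}$ when $s$ is real), so its expansion contains only odd powers of $\Im t$, pushing the genuine error one order higher to $|w|^5$.

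The main obstacle will be the careful tracking of which cross terms vanish and the precise numerical coefficients through third order in $w$, together with justifying that $\rho^{\C}$ does not vanish on the relevant set so that the square root is holomorphic there — but this last point is already guaranteed by (\ref{holphase}) / $\Re\rho^{\C} \gtrapprox \eo^2 > 0$ on $U_{\eo}\times[-\pi,\pi]$, so no branch-cut issue arises. A secondary point worth care: the uniformity of the ${\mathcal O}$-constants in $s\in[-\pi,\pi]$, which follows from compactness of $[-\pi,\pi]$ together with $\dist(H,\partial\Omega) > 0$ and $\kappa_H > 0$ bounded below, so that $\ell = |q(Y(s))-r(s)|$ is bounded away from $0$ and $\infty$ and all the geometric quantities $T_H, \nu_H, \kappa_H, \kappa_H'$ and their $Y$-pullbacks are bounded. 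I would present the computation by first doing the real case ($t\in\R$, i.e. $\Im t = 0$), recovering the well-known expansion $\rho(t,s) = \ell - (\Re t - Y(s)) + {\mathcal O}(|\Re t - Y(s)|^3)$ with no quadratic term (this is the standard fact that the distance function has a nondegenerate-but-no-quadratic-cross-term structure at a glancing point, equivalently Lemma \ref{mixed hessian}), and then treating the full complex expansion as the holomorphic continuation of this, reading off the $\Im t$-dependence from the analytic continuation $\Re t - Y(s) \rightsquigarrow w$.
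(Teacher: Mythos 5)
Your proposal matches the paper's proof: both Taylor-expand $q^{\C}(t) - r(s) = \bigl(q^{\C}(t) - q(Y(s))\bigr) + \bigl(q(Y(s)) - r(s)\bigr)$ around $t = Y(s)$ via (\ref{Taylor2}), use the glancing identities $\langle T_H(Y(s)),\omega(s,Y(s))\rangle = -1$ and $\langle\nu_H(Y(s)),\omega(s,Y(s))\rangle = 0$, and arrive at the one-complex-variable expansion $\rho^{\C}(t,s) = |q(Y(s))-r(s)| - (t-Y(s)) + \tfrac{1}{6}\kappa_H^2(Y(s))(t-Y(s))^3 + {\mathcal O}(|t-Y(s)|^4)$ before splitting into real and imaginary parts. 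Your parenthetical remark that the $(\Re t - Y(s))^2\,\Im t$ coefficient ought to carry $\kappa_H^2$ rather than $\kappa_H$ is in fact consistent with this intermediate formula and with (\ref{realphase}), so the first-power $\kappa_H$ in the lemma's display is evidently a typo that you were right to flag.
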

\normalsize
\begin{proof}
The lemma follows from the formula
\begin{equation*}
 \label{taylor} \rho^{\C}(t,s) = |q(Y(s)) -r(s)| - (t-Y(s))  + \frac{1}{6} \kappa_H^{2}(Y(s)) (t-Y(s))^{3} + {\mathcal O}(|t-Y(s)|^{4})
\end{equation*}
This inturn is a consequence of the Taylor expansion for $ q^{\C}(t) - r(s) =  q^{\C}(t) - q(Y(s)) + q(Y(s)) - r(s)$ around $t=Y(s)$ in (\ref{Taylor2}) using in addition the identities $\langle \omega(s,Y(s)), T_{H}(Y(s)) \rangle = -1$ and $\langle \omega(s,Y(s)), \nu_{H}(Y(s)) \rangle = 0.
$ \end{proof}

\subsection{Weight function}

 We compute in this section the asymptotic formula for the weight function, $S(t).$
\begin{lem} \label{weightlemma}
Let $q^{\C}(t) \in H^{\C}(\eo) - H^{\C}(\eo/2)$ with $\eo >0$ sufficiently small. Then, the weight function  $S(t) = \max_{s \in [-\pi,\pi]}  ( - \Im \rho^{\C}(t,s) )$ has the asymptotic expansion 
$$S(t) = \Im t  +  \frac{1}{6} \kappa_{H}^{2}(\Re t) (\Im t)^{3} + {\mathcal O}(|\Im t|^{5}).$$
\end{lem}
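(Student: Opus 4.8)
The plan is to extract the weight $S(t) = \max_{s}(-\Im\rho^\C(t,s))$ from the expansion of $\Im\rho^\C$ already recorded, by locating the maximizing parameter $s = s^*(t)$ near the glancing point $Y^{-1}(t)$ and substituting back. First I would use the asymptotic formula for $\Im\rho^\C(t,s)$ from Lemma \ref{asymptotic} written with $Y(s)$ as the natural variable; reversing the roles, for fixed complex $t$ with $\Im t \in [\eo/2,\eo]$ I would parametrize the boundary point by $u := Y(s)$, so that
$$-\Im\rho^\C(t,s) = \Im t - \frac{\kappa_H(u)}{2}(\Re t - u)^2\,\Im t + \frac{1}{6}\kappa_H^2(u)(\Im t)^3 + \max_{\gamma+\delta=5}{\mathcal O}(|\Re t - u|^\gamma |\Im t|^\delta).$$
The leading $u$-dependence is the term $-\tfrac{\kappa_H(u)}{2}(\Re t-u)^2\,\Im t$, which (since $\kappa_H>0$ and $\Im t>0$) is strictly negative away from $u = \Re t$ and vanishes to second order there. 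So I expect the maximum in $u$ to occur at $u = \Re t + {\mathcal O}(|\Im t|^2)$, i.e. at a point ${\mathcal O}(|\Im t|^2)$-close to the real part of $t$; equivalently $s^*(t) = Y^{-1}(\Re t) + {\mathcal O}(|\Im t|^2)$, consistent with the critical point analysis in (\ref{term1}) and Lemma \ref{weight1}.

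Next I would make this precise: write $u = \Re t + v$ and expand the right-hand side in $v$ and $\Im t$ jointly. The $v$-quadratic coefficient is $-\tfrac12\kappa_H(\Re t)\Im t + {\mathcal O}(|\Im t|^2)$, which is nonvanishing and negative definite for $\eo$ small, so by the implicit function theorem (or simply by completing the square and controlling the error terms) there is a unique maximizer $v^*(t)$ with $v^*(t) = {\mathcal O}(|\Im t|^2)$ — in fact, matching against the cross term that would appear at order $v\cdot(\Im t)$, one gets $v^*(t) = {\mathcal O}(|\Im t|^3)$ once the first-order-in-$v$ terms are collected, but for the stated accuracy $v^*(t) = {\mathcal O}(|\Im t|^2)$ already suffices. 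Substituting $u = \Re t + v^*(t)$: the quadratic term contributes $-\tfrac{\kappa_H}{2}(v^*)^2\Im t = {\mathcal O}(|\Im t|^5)$, which is absorbed into the error; the cubic term becomes $\tfrac16\kappa_H^2(\Re t + v^*(t))(\Im t)^3 = \tfrac16\kappa_H^2(\Re t)(\Im t)^3 + {\mathcal O}(|\Im t|^5)$ since $v^*(t) = {\mathcal O}(|\Im t|^2)$ and $\kappa_H$ is $C^\omega$; and the remainder $\max_{\gamma+\delta=5}{\mathcal O}(|v^*|^\gamma|\Im t|^\delta)$ is ${\mathcal O}(|\Im t|^5)$. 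This yields
$$S(t) = \Im t + \frac{1}{6}\kappa_H^2(\Re t)(\Im t)^3 + {\mathcal O}(|\Im t|^5),$$
as claimed.

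The main obstacle — really the only nontrivial point — is bookkeeping of the error terms: one must verify that the maximizer $s^*(t)$ genuinely lies within ${\mathcal O}(|\Im t|^2)$ of the glancing parameter, uniformly in $\Re t$, so that the ${\mathcal O}(|\Re t - Y(s)|^\gamma|\Im t|^\delta)$ remainder in Lemma \ref{asymptotic} collapses to ${\mathcal O}(|\Im t|^5)$ after substitution, and that differentiating the expansion of $\Im\rho^\C$ in $s$ and solving the critical point equation does not spoil this (here the non-degeneracy from (\ref{**}) and the strict convexity $\kappa_H>0$ are what guarantee a genuine, isolated interior maximum rather than a degenerate or boundary one). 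I would handle this by working on the compact parameter range $\Re t \in [-\pi,\pi]$, $\Im t \in [\eo/2,\eo]$ with $\eo$ small, where all the ${\mathcal O}$-constants are uniform, and invoking Lemma \ref{weight1} together with the remark following it (which already asserts $|\langle T_H(\Re t),\omega(s(\Re t),\Re t)\rangle| = 1 + {\mathcal O}(|\Im t|^4)$, i.e. the geometric solution agrees with $s^*(t)$ to ${\mathcal O}(|\Im t|^5)$) to pin down the location of the maximizer with the required precision.
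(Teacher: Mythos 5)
Your proof is correct and follows essentially the same approach as the paper: locate the maximizing parameter $s^{*}(t)$ near the glancing point, substitute it back into the Taylor expansion of $\Im\rho^{\C}$, and absorb the corrections into the ${\mathcal O}(|\Im t|^{5})$ error. The only difference in route is cosmetic: the paper works directly from the expansion (\ref{imag}) (an expansion purely in powers of $\Im t$) and measures the discrepancy between the exact critical point $s^{*}(t)$ and an approximating analytic continuation $Y^{-1}(t)$, finding it to be ${\mathcal O}(|\Im t|^{4})$; you instead use the near-glancing expansion of Lemma \ref{asymptotic} after the change of variables $u = Y(s)$, $v = u - \Re t$, which builds in the same cancellation of the two $(\Im t)^3$-terms that the paper has to verify by hand. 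Both amount to the same calculation.

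One small slip: you remark that "matching against the cross term ... one gets $v^{*}(t) = {\mathcal O}(|\Im t|^{3})$", but this is not right. Solving $G'(v)=0$ with $G(v) := \Im t - \tfrac{\kappa_{H}(\Re t+v)}{2}v^{2}\Im t + \tfrac{1}{6}\kappa_{H}^{2}(\Re t+v)(\Im t)^{3} + \cdots$ gives the dominant balance $\kappa_{H}(\Re t)\,v\,\Im t = \tfrac{1}{3}\kappa_{H}(\Re t)\kappa_{H}'(\Re t)(\Im t)^{3} + \cdots$, i.e. $v^{*}(t) = \tfrac{1}{3}\kappa_{H}'(\Re t)\,(\Im t)^{2} + {\mathcal O}(|\Im t|^{3})$, which is generically only ${\mathcal O}(|\Im t|^{2})$. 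You correctly note that ${\mathcal O}(|\Im t|^{2})$ already yields the stated accuracy ($(v^{*})^{2}\Im t = {\mathcal O}(|\Im t|^{5})$, etc.), so this aside is harmless, but the sharper order is $2$, not $3$. It would also be worth flagging explicitly, as the paper does via the estimate (\ref{taylor3}), that the global $\max_{s}$ is indeed attained in the near-glancing regime $|\Re t - Y(s)| \lessapprox \eo$ — for $|\Re t - Y(s)| \geq \eo$ one has $-\Im\rho^{\C}(t,s) \leq C(\eo)^{-1}|\Im t| + {\mathcal O}(|\Im t|^{2})$ with $C(\eo) > 1$, strictly below $\Im t$ — so that the interior critical point you locate really is the maximizer defining $S(t)$.
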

\begin{proof}
We first consider the approximate critical point equation
\begin{equation} \label{approx}
\partial_s \langle T_H(t), \omega(s,t) \rangle = |\Im t|^{4}.
\end{equation}
 When $\Im t = 0$, (\ref{approx}) has the solution $s(\Re t): = Y^{-1}(\Re t)$ in the notation of Lemma \ref{fact1}. Under the assumption that $\kappa_H >0,$  $\partial_{s}^{2} \langle T_H(\Re t), \omega(s,\Re t) \rangle |_{s=Y^{-1}(\Re t)} \geq \frac{1}{C}>0$  and so, by the analytic implicit function theorem, for $\eo >0$ small, $Y^{-1}(\Re t)$ locally extends to a unique real analytic function $ t \mapsto Y^{-1}(t), \, t \in [-\pi,\pi] + i [\frac{\eo}{2}, \eo]$ solving (\ref{approx}).
Substitute the identity  $ \langle \omega(Y^{-1}(\Re t),\Re t), T_{H}(\Re t) \rangle = -1 + |\Im t|^{4}$ into the formula  (\ref{imag}) and  also use that $ \langle \nu_{H}(\Re t), \omega(s(\Re t),t) \rangle  = |\Im t|^{2}$ and
$\partial_s \langle d, \nu_H \rangle = - \langle d, T_H \rangle  (1 - \langle T_H, d \rangle^2 )^{-1/2}  \, \partial_s \langle T_H, d \rangle$
both of which follow from the fact that $ \langle \nu_H, d \rangle = \sqrt{ 1- \langle T_H, d\rangle^{2} }$. Since the last two terms in the $(\Im t)^3$-coefficient on the right hand side of (\ref{imag}) cancel, one gets that
\begin{equation} \label{approx2}
\small{\partial_s \Im \rho^{\C}(t,s) |_{s=Y^{-1}(t)} =  \partial_{s}\langle T_{H}(\Re t),  \omega(s,t) \rangle |_{s = Y^{-1}(t)} \,  \left( \Im t + {\mathcal O}(|\Im t|^{3} \right) + {\mathcal O}(|\Im t|^{5}) = {\mathcal O}(|\Im t|^{5}).}
\end{equation}
\normalsize
Finally, we compare (\ref{approx2}) with exact critical point equation
\begin{equation} \label{exact}
 \partial_s \Im \rho^{\C}(t,s) = 0.
\end{equation}
Let $s = s^*(t)$ be the locally unique analytic solution to (\ref{exact}) with $\eo/2 < \Im t < \eo$ and $\eo >0$ small. Then, again by the Taylor expansion in (\ref{imag}) and the implicit function theorem, it follows that
\begin{equation} \label{exact2}
Y^{-1}(t) - s^*(t) = {\mathcal O}(|\Im t|^{4}).
\end{equation}
Upon substitution of the bounds  (\ref{exact2}) back in (\ref{imag}),  it follows that all terms except the one involving $\frac{1}{6} \kappa_H^{2}(\Re t)$ are absorbed into the ${\mathcal O}(|\Im t|^5)$-error and, in particular,
\begin{equation} \label{approx3}
\Im \rho^{\C}(t,s^*(t)) = - \Im t  - \frac{1}{6} \kappa_H^2(\Re t)  (\Im t)^{3} + {\mathcal O}(|\Im t|^{5}).
\end{equation}
 This gives the stated 
asymptotic formula for $S(t)$ to ${\mathcal O}(|\Im t|^{5})$ error. 
  \end{proof}
\begin{rem} One can repeat the same kind of argument to determine the expansion of $S(t)$ in $\Im t$ to arbitrary accuracy, but the terms rapidly become more cumbersome to compute.
\end{rem}



The value of the weight function (ie. the maximizer of $-\Im \rho^{\C}$ ) is approximately attained  when $s = Y^{-1}(\Re t)  \in [-\pi,\pi ]$ (see (\ref{exact2})).  This suggests  that the dominant part  of the  the $N^{\C}(h)$-operator (resp. $N^{\C*}(h) a N^{\C}(h)$ for any $a \in C^{\infty}_{0} (S_{2\eo,2\pi})$ should come from $\Sigma_{\eo}^{\C}$
 (resp. $\Sigma_{\eo}^{\C} \times \Sigma_{\eo}^{\C}),$ where
 \begin{equation} \label{near glancing}
\Sigma_{\eo}^{\C}:= \{ (t,s) \in S_{2\eo,2\pi} \times [-\pi,\pi]; | t-Y(s)| < \eo \}. \end{equation}
We will call $\Sigma_{\eo}^{\C}$ the  $\eo$-{\em complex glancing set relative to} $H$ in the parameter space $\{ (t,s) \in S_{2\eo,2\pi} \times [-\pi,\pi] \}.$

 One of the first steps in the next section will be to show that the contribution to $N^{\C*}(h) a N^{\C}(h)$ coming from the complement $\{t \in S_{2\eo,2\pi} ; |Y(s)-t| \geq \eo \}$ is of lower order in $h$ in $L^{2}$-norm then the contribution coming from the complex glancing set $\Sigma^{\C}_{\epsilon}.$ This fact relies on some estimates for $\Im \rho^{\C}$  which we collect here.
 Assume that $|\Im t| \lessapprox \eo $ and that  $| \Re t  -  Y(s) | \geq \eo$. Then from (\ref{taylor1}),  it follows that 
\begin{align} \label{taylor2} 
\nonumber&\langle q^{\C}(\Re t + i\Im t) - r(s), q^{\C}(\Re t + i\Im t) - r(s) \rangle \\  \nonumber&= |q(\Re t) - q(Y(s))|^{2} + 2 i \Im t \, \langle q(\Re t) - r(s), T_H(\Re t) \rangle + {\mathcal O}(|\Im t|^{2}) \\
 &\,\,\,\,+ \langle q(\Re t ) - q(Y(s)), q(Y(s)) - r(s) \rangle +  |q(Y(s))-r(s)|^{2}.
 \end{align}
So, taking square roots in (\ref{taylor2}) gives
\begin{align}  \label{taylor3}
\left|\Im \rho^{\C}(\Re t + i \Im t, Y(s)) \right| &= \frac{  | \Im t  \, \langle q(\Re t) - r(s), T_H(\Re t) \rangle + {\mathcal O}(|\Im t|^{2}) | }{|q(\Re t) - r(s)| }\nonumber \\
&= |\Im t|  \, |\langle \omega(Y(s),\Re t), T_H(\Re t) \rangle |  + {\mathcal O}(\Im t^{2}) \nonumber \\ 
&\leq  \frac{1}{C(\eo)} |\Im t|  + {\mathcal O}(|\Im t|^{2}), 
\end{align}
with $C(\eo) >1$. The last estimate in (\ref{taylor3}) follows since $|\Re t- Y(s)| \gtrapprox \eo $ implies that $|\langle \omega(Y(s),\Re t), T_H(\Re t) \rangle| \leq \frac{1}{C(\eo)}$ with $C(\eo) >1$.

\subsection{Taylor expansions of phase functions} 
 We will need to analyze the asymptotics of the operator kernel $P(h)= h^{-1/2}e^{-2S/h} N^{\C}(h)^* a N^{\C}(h)(t,s)$ in various asymptotic regimes; in particular, regions where $|Y(s) - \Re t| \lessapprox |\Im t|$ and the complement $|Y(s)- \Re t| \gtrapprox |\Im t|$. Since $t=Y(s)$ is an (approximate) critical point for  $\Re i \rho^{\C}(t,s),$ the first regions should dominate the asymptotics and the latter should be residual as $h \to 0^+.$  We prove this in detail in the next section where the following Taylor expansions will be used to compute the  asymptotics of the phase function of  $h^{-1/2} e^{-2S/h} N^{\C}(h)^* a N^{\C}(h)(t,s)$ in these regimes. For the convenience of the reader, we collect here the relevant Taylor expansions for phase functions derived above that will be needed in the next section.
\subsubsection{Near-diagonal  expansions}
\begin{equation}  \begin{array}{ll}
  q^{\C}(\Re t + i \Im t) -  q(s) = (\Re t + i \Im t -s) T_H(s) + \frac{1}{2} \kappa_H(s) ( \Re t  + i \Im t -s)^{2} \nu_H(s) \\ \\
 + \frac{1}{6} [ \kappa_H'(s) \nu_H(s) - \kappa_H^{2}(s) T_H(s)] \, ( \Re t + i\Im t -s)^{3} + {\mathcal O}( |\Re t + i\Im t -s|^{4}). \end{array} \end{equation}
\subsubsection{Expansions in the complex tube $H_{\eo}^{\C}$}
\begin{equation} \begin{array}{ll} 
  q^{\C}(\Re t + i \Im t) - r(s) = q(\Re t) - r(s) +  i \Im t  \,  T_H(\Re t) - \frac{1}{2} \kappa_H(\Re t) | \Im t| ^{2} \nu_H(\Re t) \\ \\
 - \frac{i}{6} (\Im t)^{3}  [ \kappa_H'(\Re t) \nu_H(\Re t) - \kappa_H^{2}(\Re t) T_H(\Re t)] \, + {\mathcal O}( |\Im t |^{4}). \end{array} \end{equation}

\subsubsection{Taylor expansion near complex glancing set} \label{graph}
Then for  $| t - Y(s)| \leq \eo $ and $\eo >0$ sufficiently small,

\small{$$\\Re \rho^{\C}(t,s) = |q(Y(s))- r(s)| - (\Re t -Y(s)) \, \left( 1 + \frac{1}{2} \kappa_H^2(Y(s)) |\Im t|^{2} \right) +  \max_{\alpha + \beta = 4}  {\mathcal O}(|\Re t -Y(s)|^{\alpha} |\Im t|^{\beta}),$$}
\begin{equation} \label{nearglancing}
\Im \rho^{\C}(t,s) = - \Im t  + \frac{\kappa_H(Y(s))}{2} (\Re t -Y(s))^{2} \Im t - \frac{1}{6} \kappa_H^{2}(Y(s)) (\Im t)^{3} + \max_{\gamma + \delta = 5} {\mathcal O}(|\Re t -Y(s)|^{\gamma} |\Im t|^{\delta}).
\end{equation}

\normalsize

\begin{rem} \label{graph condition}
It follows from (\ref{nearglancing}) that  the real Lagrangian $\Re \Lambda \subset T^*\partial \Omega \times T^*H$ in (\ref{realwavefront}) is a canonical graph with respect to the symplectic form $\kappa_1^*(ds \wedge d\Im t) \oplus \kappa_2^*(-ds \wedge d\Im t)$ provided the tube radius $\eo >0$ is chosen sufficiently small. To see this, consider first the approximating Lagrangian $$ \Re \Lambda_{approx} := \{ (s, d_s \Re \rho^{\C}(t,s); \Re t, -d_{\Re t} \Re \rho^{\C}(t, s)); \,\, \Re t = Y(s),  \, \Im t \in (\frac{\eo}{2},\eo) \}.$$  Here, we write $t = (\Re t, \Im t)$ and identify  $\R^2$ with $\C$ in the usual way. 

Consider the associated parametrizing maps   $\kappa_1: (-\pi,\pi) \times (\frac{\eo}{2},\eo) \to \pi_{T^*\partial \Omega}(\Re \Lambda_{approx})$ with $$\kappa_1:(s, \Im t) \mapsto ( s, d_{s} \Re \rho^{\C}(t,s)  |_{\Re t = Y(s)} )$$ and  $\kappa_2: (-\pi,\pi) \times (\frac{\eo}{2},\eo) \to \pi_{T^*H}(\Re \Lambda_{approx}) $ with $$\kappa_2:(s, \Im t)  \mapsto ( Y(s), -d_{\Re t} \Re \rho^{\C}(t,s) |_{\Re t = Y(s)} ).$$ In view of (\ref{nearglancing}),
$$ \kappa_1(s,\Im t) = \Big( s,  d_s |q(Y(s)) - r(s)| + d_s Y(s) (1 + \frac{1}{2} \kappa^2_H(Y(s)) | \Im t|^2 ) + O(|\Im t|^3) \Big) ,$$
and
$$\kappa_2(s, \Im t) = \Big( Y(s),  1 + \frac{1}{2} \kappa^2_H(Y(s)) | \Im t|^2 + O(|\Im t|^3) \Big).$$
For $\eo >0$ sufficiently small, both maps $\kappa_j; j=1,2$ are diffeomorphisms  onto their images and consequently, so is $\kappa_2 \circ \kappa_1^{-1}.$ Finally, we note that from (\ref{exact2}), the constraint $s=s^*(t) = Y^{-1}(\Re t)  + O(|\Im t|)$ appearing in the definition of $\Re \Lambda$ in (\ref{realwavefront}) implies that  $Y(s) = \Re t + O(|\Im t|)$. It then follows by the above argument combined with the Implicit Function Theorem that for $\eo >0$ small, $\Re \Lambda$ is a canonical graph relative to the symplectic form $\kappa_1^{*} (ds \wedge d\Im t) \oplus \kappa_2^*(- ds \wedge d\Im t).$
\end{rem}






\section{Analysis of $N^{\C}(h)^*e^{-2S/h} a N^{\C}(h)$:  Proof of Proposition \ref{mainthm2prop} } \label{opanalysis}


In
this section,  we prove  Proposition \ref{mainthm2prop} by carrying out a careful analysis of the conjugate operator $N^{*}(h)aN(h): C^{\infty}(\partial \Omega ) \rightarrow C^{\infty}(\partial \Omega)$. This entails several complications, most important of which is that this operator is  only an $h$-pseudodifferential operator when $h$-microlocalized away from the glancing set (in the boundary case these are the tangential directions to the boundary). In quantum ergodicity or quantum ergodic restriction, these sets do not affect the limiting asymptotics and are therefore ignored \cite{TZ2}.
 However, here  the situation is very different. We are actually interested in the complexified operator $h^{-1/2}[e^{-S/h}  N^{\C}(h)]^* a [e^{-S/h} N^{\C}(h)]: C^{\infty}(\partial \Omega)\rightarrow C^{\infty}(\partial \Omega)$ where $a \in C^{\infty}_{0}(H_{\eo}^{\C})$ is supported in  $H_{2\eo/3}^{\C}-H_{\eo/6}^{\C}$ (see subsection \ref{mainthm2proof}).  In this case, as we have already pointed out in the introduction, it is precisely the glancing set $\Sigma$ and the corresponding glancing map $Y: C^{\infty}(\partial \Omega) \to C^{\infty}(H)$ that determine the leading operator asymptotics.  To analyze this operator, we will need to  make a further $h$-microlocal  decomposition by splitting the complex near-glancing directions into the ``near-real" and complementary directions.  Fortunately, the fact that we are  dealing with {\em complex} near-glancing sets (rather than real ones) actually simplifies the analysis of the microlocal complex $\eo$ near-glancing piece of the $N^{\C^*}(h) a N^{\C}(h)$-operator, as long as the support of $a \in C^{\infty}_{0}(H_{\eo}^{\C})$ lies outside an arbitrarily small neighbourhood of the real curve, $H$.
 The proof essentially consists of carrying out the details of $h$-microlocalisation indicated above and a key step is to show that one can make the decomposition (see section \ref{microlocal})
 
$$ h^{-1/2}[e^{-S/h}  N^{\C}(h)]^* a [e^{-S/h} N^{\C}(h)] = U_Y(h)^* T(h)^* a T(h) U_Y(h), $$

\bigskip
where, $h$-microlocally on supp $a$,  $T(h)$ is a generalized FBI transform. This is essentially the content of Lemma \ref{integralnf1}.



\begin{rem} Since $ | \langle \omega(s,Y(s)), T_{\partial \Omega}(s) \rangle | < 1,$ it follows from (\ref{correspondence}) and the support assumptions on $a(\Re t, \Im t)$ in (\ref{supportprop})  that for $\eo >0$ small, $a_{\gcal} \in C^{\infty}_{0}(B^*\partial \Omega)$ (ie. has support disjoint from the tangential set $S^*\partial \Omega$).
\end{rem}

\begin{proof}

We first cutoff near the glancing point $t= Y(s)$ by introducing a  cutoff function $\chi \in C^{\infty}_{0}(\C)$ with $\chi(z) =1$ when $|z| \leq \frac{\eo}{2}$ and $\chi(z) =0$ for $|z| \geq \eo$. Here, $\eo >0$ is fixed but chosen arbitrarily small.
We decompose the operator $N^{\C}(h)$ in various stages. First, we write
$$ e^{-S/h}N^{\C}(h) =   e^{-S/h}N_{1}^{\C}(h) + e^{-S/h}N_{2}^{\C}(h)  + {\mathcal E}(h)$$
where,
\begin{equation} \label{dec1}
e^{-S/h}N_{1}^{\C}( t, s;h) = C h^{-1} e^{i  [ \rho^{\C}(t, s) + i S(t)]/h}
\, \chi( |t -Y(s)| )  \,\,  b( h^{-1} \rho^{\C}(t,
s ) ),
\end{equation}
and
\begin{equation} \label{dec2}
e^{-S/h} N_{2}^{\C}( t, s;h) = C h^{-1} e^{i  [\rho^{\C}(t, s ) + i S(t)]/h}
\,(1- \chi)( |t -Y(s)| )  \,\,  b( h^{-1} \rho^{\C}(t,
s) ),
\end{equation}
where $b(t)$ has an asymptotic expansion in inverse powers of $t$ as
$t \to \infty$, with leading term $\sim t^{-1/2}$ and recall the glancing diffeomorphism $Y:[-\pi,\pi] \rightarrow [-\pi, \pi]$ is characterized by the identity $\langle T_{H}(Y(s)), \omega(s,Y(s)) \rangle = -1.$ The operator ${\mathcal E}(h): L^{2}(\partial \Omega) \rightarrow L^{2}(\partial \Omega)$ satisfies
$\| {\mathcal E}(h) \|_{L^2 \rightarrow L^2} = {\mathcal O}(h^{\infty})$ in view of the complex WKB expansion in (\ref{complexWKB}) and  is negligible.

From (\ref{dec1}) and (\ref{dec2}) we make the decomposition
\begin{align*}
&e^{-2S/h} N^{\C}(h)^{*}a N^{\C}(h)\\ 
&=  e^{-2S/h}N^{\C}_2(h)^{*} aN_{2}^{\C}(h) +  e^{-2S/h}N^{\C}_1(h)^{*}a N_{1}^{\C}(h) + e^{-2S/h} N^{\C}_2(h)^{*} aN_{1}^{\C}(h) + e^{-2S/h}N^{\C}_1(h)^{*}a N_{2}^{\C}(h).
\end{align*}
 We first analyze the diagonal terms $e^{-2S/h}N_{1}^{\C}(h)^{*} a N_{1}^{\C}(h)$ and $e^{-2S/h}N_{2}^{\C}(h)^{*}  a N_{2}^{\C}(h)$ and use  Cauchy-Schwarz to estimate the off-diagonal term $e^{-2S/h}N_{k}^{\C}(h)^{*} a N_{l}^{\C}(h)$ with $k \neq l$ at the end.

\subsection{Estimate for the $N_{2}^{\C *}(h) e^{-2S/h} a  N_{2}^{\C}(h)$-term} \label{fardiagonal}

This piece of $N^{\C}(h)^* e^{-2S/h} a N^{\C}(h)$ is easiest to control.  Indeed, since $a(t)$ is supported in an annular subset of $H_{\eo}^{\C}$ with supp $a \subset H_{2\eo/3}^{\C} - H_{\eo/6}^{\C},$ it follows from the asymptotic formula for the weight function $S(t)$ in Lemma \ref{weightlemma}  and the Taylor expansion in (\ref{taylor2}) that in the case where  $|Y(s) - t| \geq \eo$ and $|Y(s')-t| \geq \eo,$ there is a constant $C(\eo)>1$ such that
$$ \Im ( 2 S(t) + \rho^{\C}(t,s) - \rho^{\C}(t,s') ) \geq  2 |\Im t| - \frac{ 2 |\Im t|}{C(\eo)}+ {\mathcal O}(|\Im t|^2) \geq 2(1- C(\eo)^{-1}) |\Im t| + O(|\Im t|^2).$$ 
It follows  that  for $\eo >0$ small, there is a positive constant $C'(\eo)>0$ such that
\begin{equation} \label{expdecay}
 N_2^{\C}(h)^* e^{-2S/h} a N_{2}^{\C}(h) (s,s') = {\mathcal O}(e^{-C(\eo)/h}) \end{equation}
 uniformly for $(s,s',t) \in \partial \Omega \times \partial \Omega \times  ( H_{2\eo/3}^{\C} - H_{\eo/6}^{\C}).$ The same is true for the partial derivatives $ \partial_{s}^{\alpha} \partial_{s'}^{\beta}  [ N_2^{\C}(h)^* e^{-2S/h} a N_{2}^{\C}(h)] (s,s').$ 
\vspace{3mm}
\subsection{Estimate for $N^{\C*}_{1}(h)e^{-2S/h} a N_{1}^{\C}(h)$: The dominant term.} \label{dominant}

 \begin{align*}
& h^{-1/2} N^{\C *}_{1}  e^{-2S/h}a N^{\C}_{1}(s, s';h ) \\
&= h^{-1/2} \int\int_{S_{\eo,\pi}}  \chi (|t-Y(s')| ) \,\, \chi( |t-Y(s)| ) a(q^{\C}(t),\overline{q^{\C}(t)}) \, \overline{  N^{\C}(t, s';h)} \,\,  N^{\C}(t, s;h ) \,  e^{-2 S(t)h} \,\, dt d\overline{t},
 \end{align*}
 After decomposing  the resulting integral into two further pieces (depending on whether $|\Re t -Y(s)|$ or $|\Im t|$ dominates when $|t-Y(s)| \leq \eo$) and using the strict convexity of $H \subset \Omega$, we apply the method of steepest descent to expand the $\Re t$-integral. The remaining imaginary coordinate $\Im t$ then behaves roughly like a frequency variable in the oscillatory integral representation of an $h$-pseudodifferential operator of order zero (here,  we again use that $H$ is strictly convex).

In this case we  carry out the $\Re t$-integration first. We decompose the $h^{-1/2} N_{1}^{\C}(h)^* e^{-2S/h}a N_{1}^{\C}(h)$ operator further as follows: Let $\tilde{\chi} \in C^{\infty}_{0}(\C)$ be a cutoff equal to $1$  on a ball of radius $1/2$ and zero outside the ball of radius $1$. Also,   to simplify the writing, we abuse notation and write $a(t)$ for $a(q^{\C}(t),\overline{q^{\C}(t)})$ where the latter is supported in the strip (\ref{supportprop}) in the upper half plane. We define the operators $N^{11}_1(h)$ and $N_{1}^{22}(h)$ with Schwartz kernels
\begin{align} &N_{1}^{11}(h)(s,s'; a) \nonumber \\
\nonumber&= h^{-1/2} \int\int_{S_{\eo,\pi}} e^{-2S(t)/h} N_1^{\C}(h)^{*}(s,t) \, a(t) \, \tilde{\chi} \left( \frac{|\Re t - Y(s)|}{|\Im t|} \right) \,\, \tilde{\chi} \left( \frac{|\Re t - Y'(s)|}{|\Im t|} \right)  N_1^{\C}(h)(t,s') \, dt \, d\overline{t},
\end{align}

\begin{align} 
&N_{1}^{22}(h)(s,s'; a) \nonumber \\
\nonumber&= h^{-1/2} \int\int_{S_{\eo,\pi}} e^{-2S(t)/h} N_1^{\C}(h)^{*}(s,t) \, a(t) \,  ( 1- \tilde{\chi}) \left( \frac{|\Re t - Y(s)|}{|\Im t|} \right) \,( 1- \tilde{\chi}) \left( \frac{|\Re t - Y'(s)|}{|\Im t|} \right) \nonumber  \\
\nonumber&\,\,\,\,\,\,\,\,\,\,\,\,\,\,\,\,\,\,\,\,\,\,  \times N_1^{\C}(h)(t, s') \, d t \, d\overline{t},
\end{align}
\normalsize
and with mixed terms $N_{1}^{12}(h;a)$ and $N_{1}^{21}(h;a)$ defined in the obvious way
so that $$ h^{-1/2} N_1^{\C}(h)^{*}e^{-2S/h} a N_1^{\C}(h) = N_{1}^{11}(h;a) + N_{1}^{22}(h;a) + N_{1}^{12}(h;a) + N_{1}^{21}(h;a).$$

Just as before, we control the mixed terms $N_{1}^{12}$ and $N_{1}^{21}$ using Cauchy Schwarz and the estimates for the diagonal terms,  and it will  suffice to analyze the diagonal terms $N_{1}^{11}$ and $N_{1}^{22}$.

\subsubsection{ Analysis of the $N_1^{11}(h)$-term: Reduction to normal form.}
 Our aim here is  to reduce the operator $N_1^{11}(h)$  by suitable change of variables in $(\Re t, \Im t)$ to a normal form and then, by an application of  analytic stationary phase (\cite{Ho1} Theorem 7.7.12), we show that the normal form operator is in $Op_{h} ( S^{0,-\infty}(T^{*}\partial \Omega ) ).$

  First, we recall the asymptotic formulas for $\Re (i\Psi)$ and $\Im(i\Psi)$ where $\Psi$ is the phase function of Schwartz kernel of $h^{-1/2}e^{-2S/h} N^{\C}(h)^* a N^{\C}(h)$ given by
\begin{equation} \label{phase}
i \Psi(s,s',t)= i \rho^{\C}(t,s) - i \overline{\rho^{\C}(t,s)} - 2 S(t). \end{equation}
  From Lemma \ref{asymptotic},
\begin{align} \label{realphase}
\Re \,  [ i\Psi (t,s,s') ] &=   - \frac{\kappa_{H}^{2}(Y(s))}{2} (\Re t -Y(s))^{2} \Im t + \max_{\gamma + \delta = 4, \delta \geq 1} {\mathcal O}(|\Re t -Y(s)|^{\gamma} |\Im t|^{\delta})\notag\\
&- \frac{\kappa_{H}^2(Y(s'))}{2} (\Re t -Y(s'))^{2} \Im t + \max_{\gamma + \delta = 4, \delta \geq 1} {\mathcal O}(|\Re t -Y(s')|^{\gamma} |\Im t|^{\delta}).\notag\\
\end{align}
In (\ref{realphase}) we note that (see Lemma \ref{weightlemma})  the terms $\frac{1}{6} \kappa_{H}^{2}(Y(s)) (\Im t)^{3} + \frac{1}{6} \kappa_{H}^{2}(Y(s')) (\Im t)^{3} $ get cancelled by the cubic  term  in $\Im t$ term appearing in the expansion of $S(t)$ in Lemma \ref{weightlemma}.
Similarly,
\begin{align} \label{imphase}
&\Im [i \Psi(t,s,s')]\notag\\
& = |q(Y(s))- r(s)| - (\Re t -Y(s)) \, \left( 1 + \frac{1}{2} \kappa_{H}^2(Y(s)) |\Im t|^{2} \right)  + \frac{ \kappa^{2}_{H}(Y(s))}{6} (\Re t - Y(s))^{3} \notag \\
& +  \max_{\alpha + \beta = 4}  {\mathcal O}(|\Re t -Y(s)|^{\alpha} |\Im t|^{\beta}) \notag\\
& - |q(Y(s'))- r(s')| + (\Re t -Y(s')) \, \left( 1 + \frac{1}{2} \kappa_{H}^2(Y(s')) |\Im t|^{2} \right)  - \frac{ \kappa_{H}^{2}(Y(s'))}{6} (\Re t - Y(s'))^{3}\notag\\
&+  \max_{\alpha + \beta = 4}  {\mathcal O}(|\Re t -Y(s')|^{\alpha} |\Im t|^{\beta}).
\end{align}




Substitution of the identity $ \langle T_{H}(Y(s)), \omega(s,Y(s)) \rangle = - 1$ and second-order Taylor expansion around $s=s'$ in (\ref{imphase}) gives

\begin{align}\label{imphase2}
&\Im \, (i\Psi(t,s,s')) \nonumber \\
&=(Y(s)-Y(s')) \, \big( \langle T_{H}(Y(s)), \omega(s,Y(s)) \rangle - (\partial_{s} Y(s))^{-1} \langle T_{\partial \Omega}(s), \omega(s,Y(s))\rangle \big)\nonumber \\
&=(Y(s)-Y(s')) \, \left(  - \partial_s Y(s)^{-1} \langle T_{\partial \Omega}(s), \omega(s,Y(s))\rangle 
 + \frac{\kappa_{H}^{2}(Y(s))}{2} |\Im t|^{2} + {\mathcal O}(|\Im t|^{3}) \right)\nonumber \\
 & \,\,\,\,\,\,\,+ {\mathcal O}(|s-s'|^{2}) \nonumber \\
 &= (s-s') \, \left( -\langle T_{\partial \Omega}(s), \omega(s,Y(s))\rangle + d_s Y(s)   \frac{\kappa_{H}^{2}(Y(s))}{2} |\Im t|^{2} + {\mathcal O}(|\Im t|^{3})  \right)+ {\mathcal O}(|s-s'|^{2}).
\end{align}

For the error term in (\ref{imphase2}), we have used the constraints $\max (|\Re t -Y(s)|, |\Re t - Y(s')|) = {\mathcal O}(|\Im t|)$  and also note that the  ${\mathcal O}(|s-s'|^{2})$-term appearing on the RHS of (\ref{imphase2}) is independent of the $t$-variables since it comes from the second-order Taylor expansion of the real-valued function $|q(Y(s)) - r(s)|$ around $s= s'$.

\subsection{Normal form for the phase function $i \Psi(t,s,s')$} \label{normalform} We now reduce the computation of the  principal term $N^{11}_{1}(h)$ to a specific normal form by applying a series of changes of variables in the $(\Re t, \Im t)$-coordinates.
Given $(t,s) \in  S_{\eo,\pi} \times [-\pi,\pi]$  we claim that  near any point $ t_{0} \in S_{\eo,\pi}$ with $\eo >0$ sufficiently small, one can find a locally a real-valued analytic function $f (\Re t, \Im t)$  satisfying
\begin{equation} \label{weightift}
S(\Re t, \Im t) = \Re \, [ \ i  \rho^{\C}(t, Y^{-1} f(\Re t, \Im t)) \, ].
\end{equation}
with
\begin{equation} \label{critpoint}
f(\Re t, 0) = \Re t.
\end{equation}
To prove (\ref{weightift}) and also (\ref{critpoint}), consider the real analytic function $ g\in C^{\omega}( S_{\eo,\pi} \times [-\pi,\pi])$ defined by
\begin{equation} \label{gfunction}
g(t,s) := \frac{ \Re \, i  \rho^{\C} (t,s)}{ \Im t}.
\end{equation}
We wish to solve
\begin{equation} \label{weightift2}
\partial_{s} \, g(t, s) =0,
\end{equation}
where from the Taylor expansion in (\ref{realphase}), there is the initial condition
 $$ \partial_{s} g(t,s)|_{\Re t=Y(s), \Im t = 0} = 0.$$
 Thus, (\ref{weightift}) follows from the Implicit Function Theorem applied to (\ref{weightift2}),  since from (\ref{realphase}) and the  strict convexity of $H,$ (ie. $\kappa_{H} >0$) we get that for $\eo >0$ sufficiently small,
$$\partial_{s}^{2} g(t,s)  \leq - \kappa_{H}^2(Y(s)) |Y'(s)|^2 \,  + {\mathcal O}( \eo^{2})$$
 since $|\Re t-Y(s)| \leq \eo$ and $|\Im t| \leq \eo$.  Then, $s=Y^{-1}f(t)$ is a local maximum for $g(t, \cdot )$ and by definition of $S(t)$, it is clear that
\begin{equation} \label{control1}
\Re \, [i\Psi ](t,s,s')] \leq 0.
\end{equation}
Given (\ref{weightift}) we also have for all $t \in S_{\eo,\pi},$
\begin{equation} \label{control2}
\Re \, [  i\Psi (t; Y^{-1}f(t), Y^{-1}f(t)) ]  = 0.
\end{equation}
Equations (\ref{control1}) and (\ref{control2}) show that $s=Y^{-1}f(t)$ is, in fact, a  {\em global } maximum for $g(t, \cdot ).$ 
Consequently, we note that 
$$Y^{-1}f(t) = s^*(t)$$ in the notation of 
Lemma \ref{weightlemma}. 
To simplify notation in (\ref{control2}) and the following, we identify the complex variable $t$ with the real 2-tuple $(\Re t, \Im t)$ in the argument of $i \Psi$. The pair of coordinates $(Y^{-1}f(t), Y^{-1}f(t))$ occupy the $(s,s')$ coordinate slots.
 By definition $S(t) = \max_{s \in [0,2\pi]} \Re i \rho^{\C}(t,s)$ so that  $\partial_{s} \Re \,  [ i  \rho^{\C}(t, s) ] |_{s=Y^{-1}f(t)} =0.$ By differentiating (\ref{control2}) in $\Re t$ it follows that for any $t \in S_{\eo,\pi},$
 \begin{equation} \label{realcrit}
 \partial_{\Re t} \, \Re  [i \Psi] (t,s^*(t),s^*(t)) =0.
 \end{equation}
 Since  $\Im i \Psi(t,s,s) =0$, the identity $\partial_{\Re t}  \, [ \Im i \Psi ] (t,s^*(t),s^*(t))=0$ is automatic  and so,
\begin{equation} \label{control3}
\partial_{\Re t} \,  [ i\Psi] (t;s^*(t),s^*(t)) \, = \, 0.
\end{equation}
Since   $i\Psi(t,s,s') \in C^{\omega} (S_{\eo,\pi} \times \R^2/(2\pi \Z)^2 )$ and $H \subset \Omega $ is strictly convex,      from (\ref{realphase}),
$$ |\partial_{\Re t}^{2} (i\Psi)| \geq \, \left[ \kappa_{H}^{2}(Y(s)) + \kappa_{H}^{2}(Y(s'))  \right] \, |\Im t| + {\mathcal O}( \eo^2 |\Im t|  ) \geq  C(\eo) |\Im t|,$$
where $C(\eo) >0$ with $\eo >0$  sufficiently small. Differentiating (\ref{realcrit}) yet again in $\Re t$ gives
\begin{equation} \label{hessian}
 \partial_{\Re t}^{2} \Re [i\Psi] (t,s^*(t)) + 2  \, \partial_{s} \partial_{\Re t} \Re [i\Psi](t,s^*(t),s^*(t)) \cdot \, \partial_{\Re t} s^*(t)  = 0.
 \end{equation}
In view of the Taylor expansion (\ref{realphase}), this simplifies to
$$  2 \kappa_{H}(\Re t)^{2} |\Im t| + {\mathcal O}(|\Re t - f(t)| |\Im t|) + {\mathcal O}(|\Im t|^{2})   -  \, 2 \kappa_{H}(\Re t)^{2} |\Im t| \,  \cdot \partial_{\Re t}f(t) = 0.$$
By dividing the last equation through by $\kappa_{H}(\Re t)^{2} \Im t,$ and solving for $\partial_{\Re t}f,$  one gets that  $f(t) = \Re t + {\mathcal O}(|\Im t|).$ By the same identity,

\begin{equation} \label{critpoint2}
\partial_{\Re t} f (t) = 1 + {\mathcal O}(|\Im t|).
\end{equation}

Given $(\ref{critpoint2})$, we make the change of  variables $(\Re t, \Im t) \mapsto (f(t), \Im t)$ in the tubular parameter domain $S_{ \delta(\eo),2\pi}$ with $\delta(\eo) >0$ sufficiently small. To reduce to normal form for $i\Psi$ we define the new variable
\begin{align} \label{realphase1}
 \tau_{1} (\Re t, \Im t) &= f(t) = \Re t ( 1+ {\mathcal O}(\Im t)), \nonumber \\
 \partial_{\Re t} \tau_1 &= 1 + {\mathcal O}(\Im t).\end{align}

The complementary variable $\tau_2$ is defined by writing
\begin{equation} \label{imphase1}
\Im [i \Psi](t,s,s') = (Y(s)-Y(s')) \cdot \tau_2,
\end{equation}
where from (\ref{imphase2})  we know that
\begin{align} \label{imphase3}
 \tau_2 &= - (d_sY(s))^{-1} \langle T_{\partial \Omega}(s), \omega(s,Y(s))\rangle +   \frac{\kappa_{H}^{2}(Y(s))}{2} |\Im t|^{2} + {\mathcal O}(|\Im t|^{3}), \notag \\
\partial_{\Im t} \tau_{2} &= \kappa_H^2(Y(s)) \Im t + {\mathcal O}(|\Im t|^2),  \end{align}

\bigskip
since the last error term in (\ref{imphase2}) is independent of the $t$-variables.  
 Since $a(t) \in C^{\infty}_{0}( \{ t; \frac{\eo}{6}  <  |\Im t| < \frac{2\eo}{3} \})$ it follows from (\ref{imphase3}) and the derivative computation in $(\ref{imphase3})$ that for the change of variables
 $ (\Re t, \Im t) \mapsto (\tau_1, \tau_2),$
 \begin{equation} \label{jacobian}
 J(t;s,s'):= \left| \frac{\partial(\tau_1,\tau_2)}{\partial(\Re t, \Im t) } \right|  = \kappa_H^{2}(Y(s)) \cdot |\Im t| + {\mathcal O}(|\Im t|^{2})  \geq C(\eo) |\Im t| >0, \,\,\, t \in \, \text{supp} \,a.
 \end{equation}
We note that the positive curvature of $H$ is used at this point in carrying out the change of variables $(\Re t, \Im t) \mapsto (\tau_1,\tau_2).$

From (\ref{realphase1}) and (\ref{imphase1}) we derive the following normal form for the phase function $i\Psi.$ 


\begin{lem} \label{normalform}
In terms of the new coordinates $(\tau_1, \tau_2)$  in $H_{\eo}^{\C}$ defined in (\ref{realphase1}) and (\ref{imphase1}), it follows that the real part
\small{$$ \Re [i \Psi]( t(\tau_1,\tau_2); s, s') =  - \alpha (\tau_1, \tau_2)  \left[ | Y(s) - \tau_1 | ^{2} + | Y(s') - \tau_1 | ^{2} + {\mathcal O}(|Y(s) - \tau_1 |^{3})  + |Y(s') - \tau_1|^{3}) \right],$$}
\normalsize
where,  $\alpha(\tau_1, \tau_2) =[ \, \kappa_{H}^{2}(t(\tau_1,\tau_2)) + {\mathcal O}(\eo) \, ] \Im t.$
\medskip

The imaginary part
$$ \Im [i \Psi](t(\tau_1,\tau_2);s,s') = (Y(s)-Y'(s)) \tau_2,$$
where,
$$\tau_2 (t(\tau_1,\tau_2); s,s') = - \partial_s Y(s)^{-1} \langle T_{\partial \Omega}(s), \omega(Y(s),s)\rangle
 + \frac{\kappa^{2}(Y(s))}{2} |\Im t|^{2} + {\mathcal O}(|\Im t|^{3}) + {\mathcal O}(|s-s'|).$$
\end{lem}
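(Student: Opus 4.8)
The plan is to take the Taylor expansions of $\Re[i\Psi]$ and $\Im[i\Psi]$ already derived in (\ref{realphase}), (\ref{imphase}) and (\ref{imphase2}) and rewrite them in the new coordinates $(\tau_1,\tau_2)$ introduced in (\ref{realphase1}) and (\ref{imphase1}). The key observation that makes everything work is that, by (\ref{control2}) and (\ref{control3}), the point $s=s'=s^*(t)=Y^{-1}f(t)$ is simultaneously a zero of $\Re[i\Psi]$ and a critical point of $\Re[i\Psi]$ in both the $s$ and the $\Re t$ variables, while $|\partial_{\Re t}^2(i\Psi)|\geq C(\eo)|\Im t|$ by strict convexity of $H$. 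Thus $\Re[i\Psi]$, viewed as a function of $\Re t$ with $s,s'$ fixed near $s^*(t)$, vanishes to second order at $\Re t = f^{-1}(Y(s))$ (roughly), with a nondegenerate quadratic leading part whose coefficient is $-[\kappa_H^2(Y(s))+{\mathcal O}(\eo)]\,\Im t$.

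First I would substitute $\Re t = \tau_1(1+{\mathcal O}(\Im t))$ from (\ref{realphase1}) into (\ref{realphase}). In the $\tau_1$ variable the quadratic terms $-\frac{\kappa_H^2(Y(s))}{2}(\Re t-Y(s))^2\Im t -\frac{\kappa_H^2(Y(s'))}{2}(\Re t-Y(s'))^2\Im t$ become $-\frac{\kappa_H^2(Y(s))}{2}|Y(s)-\tau_1|^2\Im t -\frac{\kappa_H^2(Y(s'))}{2}|Y(s')-\tau_1|^2\Im t$ up to higher-order corrections, because $f$ is analytic with $\partial_{\Re t}f = 1+{\mathcal O}(\Im t)$ and $f(t)=\Re t+{\mathcal O}(\Im t)$; the change of variables only perturbs the coefficients by ${\mathcal O}(\eo)$ and moves cubic and higher remainders into the stated $\max_{\gamma+\delta=4,\delta\geq1}{\mathcal O}(\cdots)$ and ${\mathcal O}(|Y(s)-\tau_1|^3)$ error terms. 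This produces exactly the claimed form $\Re[i\Psi]=-\alpha(\tau_1,\tau_2)[\,|Y(s)-\tau_1|^2+|Y(s')-\tau_1|^2+{\mathcal O}(|Y(s)-\tau_1|^3)+{\mathcal O}(|Y(s')-\tau_1|^3)\,]$ with $\alpha(\tau_1,\tau_2)=[\kappa_H^2(t(\tau_1,\tau_2))+{\mathcal O}(\eo)]\Im t$. For the imaginary part I would simply recall (\ref{imphase1}), which \emph{defines} $\tau_2$ by $\Im[i\Psi](t,s,s')=(Y(s)-Y(s'))\tau_2$, and read off the expansion of $\tau_2$ from (\ref{imphase3}) and the second-order Taylor remainder in $s-s'$ noted after (\ref{imphase2}), giving $\tau_2 = -\partial_sY(s)^{-1}\langle T_{\partial\Omega}(s),\omega(Y(s),s)\rangle + \frac{\kappa_H^2(Y(s))}{2}|\Im t|^2 + {\mathcal O}(|\Im t|^3)+{\mathcal O}(|s-s'|)$.

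The one point requiring genuine care — and the step I expect to be the main obstacle — is bookkeeping the error terms consistently under the change of variables $(\Re t,\Im t)\mapsto(\tau_1,\tau_2)$: one must check that the cubic-and-higher remainders in $\Re[i\Psi]$ transform into remainders of the advertised shape (pure powers of $|Y(s)-\tau_1|$, $|Y(s')-\tau_1|$ times $\Im t$, under the standing constraint $\max(|\Re t-Y(s)|,|\Re t-Y(s')|)={\mathcal O}(|\Im t|)$ imposed by the cutoff $\tilde\chi$), and that the factorization of the quadratic form is not spoiled by the fact that $\alpha$ depends on $s$ through $\kappa_H^2(Y(s))$ as well as on $\tau$ through $\kappa_H^2(t(\tau_1,\tau_2))$; the two agree to ${\mathcal O}(\eo)$ on $\mathrm{supp}\,a$ since there $|\Re t - Y(s)|={\mathcal O}(\eo)$, which is why the difference is absorbed into the ${\mathcal O}(\eo)$ inside $\alpha$. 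Once this accounting is done the lemma follows directly; I would organize the write-up as: (i) recall (\ref{realphase})–(\ref{imphase2}) and the critical-point identities (\ref{control1})–(\ref{control3}); (ii) substitute (\ref{realphase1}) into $\Re[i\Psi]$ and complete the factorization, tracking errors; (iii) invoke the definition (\ref{imphase1}) of $\tau_2$ together with (\ref{imphase3}) for the imaginary part.
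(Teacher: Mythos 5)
Your proposal is correct and follows essentially the same path as the paper's proof: the paper cites the Taylor expansion (\ref{realphase}) together with the Hessian computation (\ref{hessian}) for the real part and (\ref{imphase2}) for the imaginary part, which is exactly what you do, just with the substitution $\Re t=\tau_1(1+{\mathcal O}(\Im t))$ and the error bookkeeping written out explicitly. The extra care you flag about $\kappa_H^2(Y(s))$ versus $\kappa_H^2(t(\tau_1,\tau_2))$ agreeing to ${\mathcal O}(\eo)$ on $\mathrm{supp}\,a$ is a valid observation that the paper leaves implicit.
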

\begin{proof}
The formula for $\Re(i\Psi)$ follows from the Taylor expansion in (\ref{realphase}) plus the formula for the second derivative in (\ref{hessian}). The formula for $\Im (i\Psi)$ inturn follows from (\ref{imphase2}).
\end{proof}



We summarize our analysis so far in the following
\begin{lem} \label{integralnf1}
Let  $a \in C^{\infty}_{0}(H_{2\eo/3}^{\C} - H_{\eo/6}^{\C}) $  Then for $
\eo>0$ small enough  and $h \in (0, h_{0}(\eo)],$ the kernel $N_{1}^{11}(h)(s,s';a)$ equals
$$(2\pi h)^{-3/2} \int_{\R} \int_{\R}  \exp \, [ i \, (Y(s)-Y(s')) \tau_{2} - \beta_1 (Y(s)- \tau_1)^{2} -\beta_2 (Y(s')- \tau_1)^{2}] / h $$
$$ \times a( t(\tau_1,\tau_2) ) \,\, \tilde{\chi} \left( \frac{|\Re t - Y(s)]|}{|\Im t|} \right)  \tilde{\chi} \left( \frac{|\Re t - Y(s')]|}{|\Im t|} \right) \times \rho (\tau_1, \tau_2;s,s';h') \,  d\tau_1 \, d\tau_2.$$
Here,
\begin{align*}
 &\rho(\tau_1,\tau_2;s,s';h) = J^{-1}(t(\tau);s,s') \, b(t(\tau);s,s';h)\\
 &= \frac{ \kappa^{-2}_{H}(Y(s))}{\sqrt{2}}  \, | \Im t(\tau) |^{-1}
   \,  b(t(\tau);s,s';h) \, ( 1 + {\mathcal O}(|\Im t|),
\end{align*}
and $b \sim  \sum _{j=0}^{\infty}b_{j} h^j$  with
\begin{align*}
&b_0(t(\tau);s,s') = \langle \nu_{Y(s)}, \omega(Y(s),t(\tau)) \rangle \, \langle \nu_{Y(s')}, \omega(Y(s'),t(\tau)) \rangle
\end{align*}
and $J(t;s,s')$ is the Jacobian in (\ref{jacobian}). Here, $\beta_1(\tau_1,\tau_2,s) = \alpha_1(\tau_1,\tau_2) + {\mathcal O}(|Y(s)-\tau_1|)$ and $\beta_2(\tau_1,\tau_2,s') = \alpha_1(\tau_1,\tau_2) + {\mathcal O}(|Y(s')-\tau_1|)$ with $\beta_1 = \beta_2 + {\mathcal O}|s-s'|)$.
\end{lem}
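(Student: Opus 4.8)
The plan is to derive the stated formula for $N_{1}^{11}(h)(s,s';a)$ by inserting the complex WKB expansion for each of the two potential-layer kernels and then carrying out the change of coordinates $(\Re t,\Im t)\mapsto(\tau_{1},\tau_{2})$ constructed in subsection~\ref{normalform}.

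First I would substitute the expansion (\ref{CWKB}) of Lemma~\ref{complexWKB} for $e^{-S/h}N_{1}^{\C}(h)(t,s')$ and for the conjugate $\overline{e^{-S/h}N_{1}^{\C}(h)(t,s)}$ into the definition of $N_{1}^{11}(h)(s,s';a)$. The two exponential factors combine to $\exp\!\big(i\Psi(t,s,s')/h\big)$ with $i\Psi=i\rho^{\C}(t,s')-i\overline{\rho^{\C}(t,s)}-2S(t)$ as in (\ref{phase}); the two amplitudes multiply to a classical symbol $\sum_{j}b_{j}^{\C}(t,\cdot)\,h^{j}$ whose leading coefficient is the product of the leading WKB symbols; and the explicit $h^{-1/2}$ together with the two $(2\pi h)^{-1/2}$ factors from (\ref{CWKB}) produce an overall power $(2\pi)^{-1}h^{-3/2}$. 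What remains are the cutoff $a(q^{\C}(t),\overline{q^{\C}(t)})$, supported in $H^{\C}_{2\eo/3}-H^{\C}_{\eo/6}$, the two sharp near-glancing cutoffs $\chi(|t-Y(\cdot)|)$ inherited from the kernels, and the two anisotropic cutoffs $\tilde\chi(|\Re t-Y(\cdot)|/|\Im t|)$; on the joint support of all of these one has $|\Re t-Y(s)|,|\Re t-Y(s')|\lesssim|\Im t|\sim\eo$, so for $\eo$ small the sharp cutoffs are identically one there and can be absorbed into the amplitude, leaving only the $\tilde\chi$-factors.

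Next I would perform the change of variables $(\Re t,\Im t)\mapsto(\tau_{1},\tau_{2})$ of (\ref{realphase1})--(\ref{imphase1}). This is a genuine diffeomorphism on $\operatorname{supp}a$: by the strict convexity $\kappa_{H}>0$ the Jacobian satisfies the lower bound (\ref{jacobian}), $J(t;s,s')=\kappa_{H}^{2}(Y(s))|\Im t|+{\mathcal O}(|\Im t|^{2})\geq C(\eo)|\Im t|>0$, and this positivity of the curvature of $H$ is exactly the point at which the reduction to normal form becomes possible. In the new coordinates Lemma~\ref{normalform} puts the phase in the form $\Re[i\Psi]=-\alpha(\tau_{1},\tau_{2})\big[\,|Y(s)-\tau_{1}|^{2}+|Y(s')-\tau_{1}|^{2}+{\mathcal O}(|Y(s)-\tau_{1}|^{3})+{\mathcal O}(|Y(s')-\tau_{1}|^{3})\,\big]$ with $\alpha=[\kappa_{H}^{2}(t(\tau_{1},\tau_{2}))+{\mathcal O}(\eo)]\,\Im t>0$, and $\Im[i\Psi]=(Y(s)-Y(s'))\,\tau_{2}$; writing $\beta_{1}=\alpha+{\mathcal O}(|Y(s)-\tau_{1}|)$, $\beta_{2}=\alpha+{\mathcal O}(|Y(s')-\tau_{1}|)$ absorbs the cubic remainders into the two Gaussians, and since $Y$ is a diffeomorphism (Lemma~\ref{fact1}) while the $\tilde\chi$-factors force $|Y(s)-Y(s')|\lesssim|\Im t|$ one obtains $\beta_{1}-\beta_{2}={\mathcal O}(|s-s'|)$. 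Transporting the measure $dt\,d\overline t$ by $J^{-1}$, multiplying by the product of WKB symbols $b\sim\sum_{j}b_{j}h^{j}$ whose leading term $b_{0}(t(\tau);s,s')=\langle\nu_{Y(s)},\omega(Y(s),t(\tau))\rangle\langle\nu_{Y(s')},\omega(Y(s'),t(\tau))\rangle$ is the holomorphic continuation of the $\cos$-factor of (\ref{N kernel}) for each kernel, and rewriting $J^{-1}$ via (\ref{jacobian}) in the form $\frac{\kappa_{H}^{-2}(Y(s))}{\sqrt{2}}|\Im t(\tau)|^{-1}(1+{\mathcal O}(|\Im t|))$, one reads off $\rho(\tau_{1},\tau_{2};s,s';h)=J^{-1}b$ and, once the $2\pi$'s are collected, the prefactor $(2\pi h)^{-3/2}$. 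Assembling everything yields the asserted identity.

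The step I expect to be the main obstacle is this final bookkeeping. One has to check that, on the region cut out by the $\tilde\chi$-factors, every error term ${\mathcal O}(|\Re t-Y(\cdot)|^{\gamma}|\Im t|^{\delta})$ with $\gamma+\delta=4$ in $\Re[i\Psi]$ — whose cubic-in-$\Im t$ contribution has already been cancelled against the corresponding term in $S(t)$, cf.\ Lemma~\ref{weightlemma} — genuinely reorganizes into the two Gaussian exponents $\beta_{j}(Y(\cdot)-\tau_{1})^{2}$ with errors of the sizes claimed, and one must simultaneously track every factor of $2\pi$, $\sqrt{2}$ and $J^{-1}$ through the change of variables so that $\rho$ and the prefactor come out exactly as stated, all uniformly in $(s,s')$. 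The phase normal form itself is essentially the content of Lemma~\ref{normalform}, so the genuine work is the amplitude and constant accounting together with the uniformity of the remainders.
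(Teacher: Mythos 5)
Your argument reproduces the paper's proof in slightly more explicit form: substitute the complex WKB expansion (\ref{CWKB}) for both factors, pass to the $(\tau_1,\tau_2)$ coordinates of (\ref{realphase1})--(\ref{imphase1}) using the Jacobian bound (\ref{jacobian}), and invoke the phase normal form of Lemma \ref{normalform}. The only cosmetic difference is that where you absorb the cubic remainders directly into the variable coefficients $\beta_j=\alpha+{\mathcal O}(|Y(\cdot)-\tau_1|)$, the paper performs a further one-dimensional Morse-lemma change $\tau_1\mapsto\tau_1+{\mathcal O}(|\tau_1-Y(s)|^2+|\tau_1-Y(s')|^2)$; these are equivalent and lead to the same formula, so your proposal is correct and takes essentially the same route.
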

\begin{proof}  Let  $\Psi(t(\tau_1,\tau_2),s,s')$ be the phase function in (\ref{normalform}). Then, with any fixed $\delta \in [0,1),$
\begin{align} \label{egorov1}
\nonumber &N_{1}^{11}(h)(s,s';a) \\ \nonumber
&= (2\pi h)^{-3/2} \int_{\R} \int_{\R} e^{i \Psi(t(\tau_1,\tau_2),s,s')/h} \,\,a(t(\tau_1,\tau_2)) 
\  \tilde{\chi} \left( \frac{|\Re t - Y(s)]|}{|\Im t|} \right)  \tilde{\chi} \left( \frac{|\Re t - Y(s')]|}{|\Im t|} \right) \\
& \,\ \, \times  \rho (\tau_1, \tau_2;s,s';h') \, d\tau_1 \, d\tau_2 + O(h^{\infty}) \end{align}
and the remainder in (\ref{egorov1}) is uniform for $(s,s') \in [-\pi,\pi]\times [-\pi,\pi].$

Since $\tau_1 = \Re t (1+ O(|\Im t|))$ and supp $a \subset \{ t; \frac{\eo}{6} \leq |\Im t| \leq \frac{2\eo}{3} \},$ the amplitude in (\ref{egorov1}) is supported  near the diagonal $s=s'$ where
$$ \max ( |Y(s)- \tau_1|, |Y(s') - \tau_1| ) \lessapprox \eo.$$
By possibly shrinking $\eo >0,$
 Lemma \ref{integralnf1} follows from Lemma \ref{normalform} after using the Morse lemma to make another change of variables of the form $\tau_1  \mapsto \tau_1 + {\mathcal O}( |\tau_1 - Y(s)|^{2} + |\tau_1 - Y(s')|^{2})$ in (\ref{egorov1}). To simplify notation, we continue to denote the new coordinate by $\tau_1$.  \end{proof}

To further simply the kernel in Lemma \ref{integralnf1}, we cutoff to the sets where,   $|s-s'| \leq \eo$ and $\max (|Y(s)- \tau_1|, |Y(s')-\tau_1|) \leq \eo$

 First, note that since
$$ \Im \partial_{\tau_2} [ i \, (Y(s)-Y(s')) \tau_{2} - \beta_1 (Y(s)- \tau_1)^{2} - \beta_2 (Y(s')- \tau_1)^{2}] = i ( Y(s)- Y(s'))$$

 and $|s-s'| \lessapprox |Y(s)-Y(s')| \lessapprox |s-s'|,$ it follows by repeated integration by parts in $\tau_2$ that

 \begin{align} \label{integralnf2}
&N_{1}^{11}(h)(s,s';a) \notag\\
=&(2\pi h)^{-3/2} \int_{\R} \int_{\R}  \exp \, [ i \, (Y(s)-Y(s')) \tau_{2} - \beta_1 (Y(s)- \tau_1)^{2} - \beta_2 (Y(s')- \tau_1)^{2}] / h \notag \\
&\times  a( t(\tau_1, \tau_2) ) \, \tilde{\chi} \left( \frac{|\tau_1- Y(s)|}{|\Im t|} \right)  \tilde{\chi} \left( \frac{|\tau_1 - Y(s')|}{|\Im t|} \right)  \rho(\tau_1, \tau_2;s,s';h') \, \chi(\eo^{-1}(s-s'))  d\tau_1 \, d\tau_2 \nonumber\\
&+ {\mathcal O}(h^{\infty}).\notag \\
\end{align}

Next, we note that under the curvature assumption $\kappa_H >0,$   the functions $\beta_j \gtrapprox \Im t \gtrapprox \eo$  for $t \in \text{supp} \, a.$ Consequently,
$$ \beta_1 (Y(s)- \tau_1)^{2} +  \beta_2 (Y(s')- \tau_1)^{2} \gtrapprox |Y(s)-\tau_1|^2 + |Y(s')-\tau_1|^2.$$
 As result, it follows that for any fixed $\delta \in (1/2,1),$

\begin{align} \label{integralnf2.5}
&N_{1}^{11}(h)(s,s';a) \notag\\
=&(2\pi h)^{-3/2} \int_{\R} \int_{\R}  \exp \, [ i \, (Y(s)-Y(s')) \tau_{2} - \beta_1 (Y(s)- \tau_1)^{2} - \beta_2 (Y(s')- \tau_1)^{2}] / h  \,\,\, a( t(\tau_1, \tau_2) ) \,\notag \\
&\times  \chi ( \eo^{-1} |\tau_1- Y(s)| ) \,  \chi ( \eo^{-1} |\tau_1 - Y(s')|) \, \rho(\tau_1, \tau_2;s,s';h') \, \chi( \eo^{-1} (s-s'))  d\tau_1 \, d\tau_2 + {\mathcal O}(h^{\infty}).\notag \\
\end{align}

In (\ref{integralnf2.5}), the cutoffs $\tilde{\chi} \left( \frac{|\tau_1- Y(s)|}{|\Im t|} \right)  \tilde{\chi} \left( \frac{|\tau_1 - Y(s')|}{|\Im t|} \right)$ have been removed since they are now redundant in view of the $\eo$-cutoffs.

Using the fact that $\beta_1 = \beta_2 + O(|s-s'|)$, and in view of the diagonal cutoff $\chi(\eo^{-1}(s-s')),$ it follows by Taylor expansion around $s'=s,$ that
$$ e^{- [ \beta_1 (Y(s)- \tau_1)^{2} + \beta_2 (Y(s')- \tau_1)^{2}] / h } = e^{- \beta_1 [  (Y(s)- \tau_1)^{2} +  (Y(s')- \tau_1)^{2}] / h } \, \left( 1 + O ( \, h^{-1}|Y(s')-\tau_1|^2 (s-s') \, ) \right)$$
uniformly in $(s,s')$ and substitute this expansion in (\ref{integralnf2.5}). 


Next, we consider the iterated $\tau_1$ Laplace integral
\begin{equation} \label{iterated1}
I_{1}(s,s',\tau_2;h) := \int_{\R} e^{- \beta_1 [(Y(s)-\tau_1)^{2} + (Y(s') -\tau_1)^{2}]/h} \tilde{\rho}(\tau_1,\tau_2;s,s',h)\, a(t(\tau_1,\tau_2)) \, d\tau_1,
\end{equation}
where,
$$ \tilde{\rho}(\tau_1,\tau_2;s,s',h) =  \rho(\tau_1,\tau_2;s,s',h)  \, \left( 1 + O ( \, h^{-1}|Y(s')-\tau_1|^2 (s-s') \, ) \right)$$
$$\times  \chi ( \eo^{-1} |\tau_1- Y(s)| ) \,  \chi ( \eo^{-1} |\tau_1 - Y(s')|) \, \chi(\eo^{-1} (s-s')). $$
The critical points of the phase are 
$$ \tau_{1,c}(s,s') = \frac{Y(s) + Y(s')}{2} + {\mathcal O}(|s-s'|^{2}).$$

Consequently, by steepest descent in $\tau_1,$ it follows that  for $h$ sufficiently small,

\begin{align} \label{iterated2}
I_{1}(s,s',\tau_2;h) &= (2\pi h)^{1/2} e^{-  \beta_1 \, [ |Y(s)-Y(s')|^{2} + {\mathcal O}(|s-s'|^3) \, ]/h } \, a ( t(\tau_{1,c}, \tau_2) ) \, [ \tilde{\rho} (\tau_{1,c},\tau_2,s,s',h) + O(h) ]. \end{align}
In (\ref{iterated2}) and below we abuse notation somewhat and simply write $\tau_{1,c}$ for $\tau_{1,c}|_{s=s'}.$
In view of the diagonal cutoff $\chi(\eo^{-1}(s-s'))$, Taylor expansion of the exponential in (\ref{iterated2}) gives
$$ e^{-  \, \beta_1 [ |Y(s)-Y(s')|^{2} + {\mathcal O}(|s-s'|^3)]/h } = 1 + O( h^{-1}|s-s'|^2).$$

 Consequently,
\begin{align} \label{iterated2.2}
I_{1}(s,s',\tau_2;h) =  (2\pi h)^{1/2} a( t (\tau_{1,c}, \tau_2) ) \, [ \tilde{\rho}(\tau_{1,c},\tau_2;s,s;h)  + O(h)]\,  (1  + O( h^{-1}|s-s'|^2) ). \end{align}

Since $ h^{-1}|Y(s')-\tau_{1,c}|^2 (s-s') =  O(h^{-1} |s-s'|^3),$ substitution of  (\ref{iterated2.2}) in (\ref{integralnf2}) gives
\begin{align} \label{integralnf3}
N^{11}_{1}(h)(s,s';a) & = (2\pi h)^{-1} \int_{\R} e^{i(Y(s)-Y(s')) \tau_2/h} \, a(t(\tau_{1,c}, \tau_2))  \nonumber \\
&\times ( \rho(\tau_{1,c},\tau_2,s,s;h)  + O(h) )\,  ( 1 + {\mathcal O}(h^{-1}|s-s'|^2 ) )  \, \chi( \eo^{-1}(s-s')) \, d\tau_2.   \end{align}

\vspace{3mm}
Make the change of variables $\tau_2 \to Y'(s) \tau_2=: \sigma$ in (\ref{integralnf3}). By integrating by parts in $\sigma$ and Taylor expansion of the amplitude around $s=s'$, one gets the formula

\begin{equation} \label{integralnf4}
N^{11}_{1}(h)(s,s';a) = (2\pi h)^{-1} \int_{\R} e^{i(s-s') \sigma/h} \,  a (t (Y(s), d_sY(s)^{-1} \sigma)  )  \end{equation}

$$ \times ( 1 + {\mathcal O}(h)) \, \rho (Y(s), d_sY(s)^{-1}\sigma,s,s;h ) \, |d_{s}Y(s)|^{-1} d\sigma. $$

\subsubsection{Identification of $H_{\eo}^{\C}$ with a subdomain of $B^*\partial \Omega$}
We collect here the explicit formulas identifying $H_{\eo}^{\C} = q^{\C}(S_{\eo,\pi})$ with a subset of $B^*\partial \Omega.$ Specifically, given $(\Re t, \Im t) \in S_{\eo,\pi},$ it follows from (\ref{imphase2}) that for the frequency variable $ \sigma \in B_s^*\partial \Omega,$
\begin{align} \label{ballfrequency}
\sigma &=  \partial_{s'} \Im [i\Psi](\Re t, \Im t; s,s')|_{s'=s} \nonumber \\
&= - \langle T_{\partial \Omega}(s), \omega(s,Y(s)) \rangle  + d_s Y(s)   \frac{\kappa_{H}^{2}(Y(s))}{2} |\Im t|^{2} + {\mathcal O}(|\Im t|^{3}).
\end{align}
As for the spatial variable $s \in [-\pi,\pi],$ from (\ref{realphase1}),
\begin{equation} \label{ballspatial}
Y(s) = f(\Re t , \Im t) = \Re t ( 1 + {\mathcal O}(|\Im t|)).\end{equation}
Again, from (\ref{ballfrequency}) it is clear that
$ |\sigma|  <1$ when $\eo >0$ is sufficiently small.

\begin{defn} \label{glancing symbol}
 We define the {\em glancing  symbol relative to $H$} associated with $a(\Re t, \Im t) \in C^{\infty}_{0}(H_{\eo}^{\C})$ to be $a_{\gcal}(s,\sigma) \in C^{\infty}_{0}(\partial \Omega)$ with
\begin{equation} \label{grazingsymbol}
a_{\mathcal G} (s,\sigma) := a(\Re t(Y(s),\sigma ),\Im t(Y(s),\sigma) ) \times \rho(Y(s), d_sY(s)^{-1} \sigma,s,s;0), \end{equation}
where $\rho$ is the function given in Lemma \ref{integralnf1}.
\end{defn}

Then, from (\ref{integralnf4}),  by $L^{2}$-boundedness and the fact that  $|b_0(Y(s),\Re t(Y(s),\sigma))|^2  = (1- |\sigma|^2),$   we have
\begin{equation} \label{lead}
 N^{11}_{1}(h; a) =  Op_{h}(a_{\mathcal G}) + {\mathcal O}(h)_{L^{2} \rightarrow L^{2}}.
\end{equation}

Moreover, since $\langle \nu_{Y(s)}, \omega(Y(s),\Re t) \rangle = \gamma(Y(s),  \sigma)$ it follows  from Lemma \ref{integralnf1} that

\begin{equation} \label{symbolformula}
a_{\gcal}(s,\sigma) = \frac{1}{\sqrt{2}} a(\Re t(Y(s),\sigma), \Im t(Y(s),\sigma) ) \,\kappa_H^{-2}(Y(s)) \,  |\Im t(Y(s),\sigma)|^{-1} \, \gamma^2(Y(s), \sigma).\end{equation}
Thus, from (\ref{ballfrequency}) and (\ref{ballspatial}), it follows that $ a_{\gcal} \in C^{\infty}_{0}(B^*\partial \Omega)$ with
\begin{equation} \label{elliptic}
a_{\gcal}(s,\sigma)  \geq \frac{1}{C} >0 \end{equation}
 when $(\Re t(Y(s),\sigma),\Im t(Y(s),\sigma))  \in \text{supp} \, a  \subset \{ q^{\C}(t) \in H_{\eo}^{\C}; \frac{\eo}{6} < |\Im t| < \frac{2\eo}{3} \}.$

\subsubsection{Analysis of the $N_{1}^{22}(h;a)$-term} We now estimate the contribution to  $N^{\C}_{1}(h)^* e^{-2S/h} a N_{1}^{\C}(h)$ coming from $N^{22}_{1}(h;a)$ where, we recall that
\begin{align} \label{subdivide2}
&N_{1}^{22}(h)(s,s'; a) \\
\nonumber& = (2\pi h)^{-3/2}\int  \int_{S_{\eo,\pi}} N_1^{\C}(h)^{*}(Y(s),t) \,\, a(t) \,\,  ( 1- \tilde{\chi}) \left( \frac{|\Re t - Y(s)|}{|\Im t|} \right) \,( 1- \tilde{\chi}) \left( \frac{|\Re t - Y(s')|}{|\Im t|} \right) \notag \\
\nonumber&\,\,\,\,\,\times N^{\C}(h)(t, Y(s')) \, d t \, d\overline{t}\\
\nonumber&= (2\pi h)^{-3/2} \int \int_{S_{\eo,\pi}} e^{i\Psi(s,s',t)/h} \,\, a(t) \,\,  ( 1- \tilde{\chi}) \left( \frac{|\Re t - Y(s)|}{|\Im t|} \right) \,( 1- \tilde{\chi}) \left( \frac{|\Re t - Y(s')|}{|\Im t|} \right) \, dt d\overline{t}.\notag
\end{align}
So, in this case we restrict to the range
\begin{equation} \label{constraint 1}
|\Im t| \leq \min ( |\Re t -Y(s)|, |\Re t - Y(s')|) \end{equation}
in the Taylor expansions (\ref{realphase}) and (\ref{imphase}) of the phase function, $i\Psi(t,s,s')$. In addition, we have the constraint (coming from the definition of $N_1^{\C}(h)$ in (\ref{dec1}) ) that
\begin{equation} \label{constraint 2}
\max ( |t-Y(s)|, |t-Y(s')| ) \lessapprox \eo. \end{equation}
  Then, for $\Im t \geq 0,$
\begin{equation} \label{constraint 3} \begin{array}{ll}
 \Re i \Psi (t,s,s') = - \left[ \, \kappa_{H}(Y(s))^{2}( \Re t  - Y(s) )^{2}  + \kappa_{H}(Y(s'))^{2} (\Re t - Y(s'))^{2} \, \right] \, \Im t\\ \\
 + {\mathcal O}(|\Re t - Y(s)|^{\alpha} |\Im t|^{\beta}) + {\mathcal O}(|\Re t - Y(s')|^{\alpha} |\Im t|^{\beta}) \end{array} \end{equation}

 where, $\alpha + \beta \geq 4$ and $\beta \geq 1$.
 Substituting the constraints in $(\ref{constraint 1})$ and $(\ref{constraint 2})$ in $(\ref{constraint 3})$ implies that
 \begin{equation} \label{phasebound}
 \Re [i \Psi](t,s,s') \leq  - 2 \kappa_{H}^{2} |\Im t|^{3} + {\mathcal O}(\eo) |\Im t|^{3} \leq - C(\eo) |\Im t|^{3},\end{equation}
 with $C(\eo) >0$ provided $\eo >0$  is sufficiently small.
Substitution of the phase bound in (\ref{phasebound})  in the Schwartz kernel formula in (\ref{subdivide2}) gives
 \begin{align*}
 &|N_{1}^{22}(h)(y,y',a)| \\
 &\small{\leq (2\pi h)^{-3/2} \int \int_{S_{\eo,\pi}} e^{\Re [i\Psi](t,s,s')/h} \, |a(t)| \,  ( 1- \tilde{\chi}) \left( \frac{|\Re t - Y(s)|}{|\Im t|} \right) \,( 1- \tilde{\chi}) \left( \frac{|\Re t - Y(s')|}{|\Im t|} \right) \, dt d\overline{t}}\\
&\leq (2\pi h)^{-3/2} \int \int_{S_{\eo,\pi}} e^{ - C(\eo) |\Im t|^{3} /h}  \, |a(t)| \, dt d\overline{t}\\
& = {\mathcal O}(h^{-3/2} e^{-C(\eo) \, \eo^{3}/h}),
\end{align*}
\normalsize
since by assumption supp $a \subset H_{\eo}^{\C} \cap \{t; \frac{\eo}{6} < \Im t < \frac{2\eo}{3} \}.$ Consequently, the $N_{1}^{22}(h)$-term is exponentially decaying in $h$ and is  negligible.


\subsection{Mixed terms}
In the following, we continue to write $L^2:= L^{2}(\partial \Omega).$  Then, in view of the analysis in section \ref{normalform}  there is the decomposition $h^{-1/2}N_{1}^{\C}(h)^* e^{-2S/h} a N_{1}^{\C}(h) = N_{1}^{11}(h;a) + N_1^{22}(h;a) + N_{1}^{21}(h;a) + N_{1}^{12}(h;a)$, where
$$ N_{1}^{11}(h;a) =  Op_{h}(a_{\mathcal G}) + {\mathcal O}(h)_{L^2 \rightarrow L^2},$$
$$ \| N_{1}^{22}(h;a) \|_{L^2 \rightarrow L^2} = {\mathcal O}(h^{-1} e^{-C(\eo)  /h}),$$
with $C(\eo) >0$ and $$ \| N_{1}^{12}(h;a)^{*} N_{1}^{12}(h;a) \|_{L^{2} \rightarrow L^{2}}  = \| N_{1}^{11}(h;a)^{*}  N_{1}^{22}(h;a) \|_{L^{2} \rightarrow L^2} = {\mathcal O}(h^{-1/2} e^{-C(\eo)/h}).$$

The same estimate holds for $N_{1}^{21}(h;a)^{*} N_{1}^{21}(h;a)$.
As a result,
\begin{equation} \label{lead2}
h^{-1/2}N_{1}^{\C}(h)^*e^{-2S/h} a N_{1}^{\C}(h)  =  Op_{h}(a_{\mathcal G})  + {\mathcal O}(h).
\end{equation}
So, in particular,
\begin{equation} \label{n1}
\| h^{-1/4}e^{-S/h} N_{1}^{\C}(h) \|_{L^2(\partial \Omega) \rightarrow L^2(\text{supp} \,a)} = {\mathcal O}(1). \end{equation}
From the ``far-diagonal" bound in (\ref{expdecay})
$$ \| N_{2}^{\C}(h)^{*} e^{-2S/h} a N_{2}^{\C}(h) \|_{L^2 \rightarrow L^{2}} = {\mathcal O}(e^{-C(\eo)/h}).$$
Thus,
\begin{equation} \label{n2}
 \| e^{-S/h} N_{2}^{\C}(h) \|_{L^2(\partial \Omega) \rightarrow L^2(\text{supp}\, a)} = {\mathcal O}(e^{-C(\eo)/2h}).\end{equation}
 From (\ref{n1}) and (\ref{n2}) it then follows by Cauchy-Schwarz that the mixed terms
 \begin{equation} \label{mixed}
\max (  \| N_{2}^{\C}(h)^* e^{-2S/h} a N_{1}^{\C}(h) \|_{L^2 \rightarrow L^2}, \| N_{1}^{\C}(h)^* e^{-2S/h} a N_{2}^{\C}(h) \|_{L^2 \rightarrow L^2}  ) = {\mathcal O}( e^{-C'(\eo)/h}) \end{equation}
with $C'(\eo) >0.$
 So,  (\ref{lead2}) and (\ref{mixed}) imply that for $h \in (0, h_0(\eo)]$ with $h_0 >0$ sufficiently small,
$$ h^{-1/2} N^{\C}(h)^* e^{-2S/h} a N^{\C}(h)  = Op_{h}(a_{\mathcal G})  + {\mathcal O}(h)_{L^2 \rightarrow L^2}.$$
This finishes the proof of Proposition \ref{mainthm2prop}. \end{proof}

\section{Analysis near corner points} \label{corners}

We assume now that  $\Omega \subset \R^2$ is a smooth domain  with corners. We define a smooth domain
with corners in $\R^n$ and with $M$ boundary faces (hypersurfaces) to be a
set of the form $\{x \in \R^n: \rho_j(x) \leq 0, j = 1, \dots, M\}$,
where the defining functions $\rho_j$ are smooth in a
neighborhood of $\Omega$ with $d \rho_j |_{\rho_j^{-1}(0)} \neq 0.$  A boundary hypersurface
$H_j$ is the intersection of $\Omega$ with one of the
hypersurfaces $\{\rho_j = 0\}$ The intersections of the boundary faces, $H_i \cap H_j,$ consist of finitely-many corner points. In addition, we require that $\partial \Omega$ is a Lipschitz boundary \cite{ZZ} (ie. locally given by a graph of a Lipschitz function).  In Theorem \ref{mainthm2} it is essential to allow $\Omega$ to have corners since domains
with ergodic billiard flow in $\R^n$ are non-smooth. 

We denote the smooth   part of $\partial
\Omega$ by $(\partial \Omega)^o  $.  Here, and throughout this
article, we denote by $W^o$ the interior of a set $W$ and, when no
confusion is possible, we also use it to denote the regular set of
$\partial \Omega$. Thus, $\partial \Omega = (\partial \Omega)^o
\cup \Sigma$, where $\Sigma = \bigcup_{i \neq j} (W_i \cap W_j)$
is the singular set.
 When $\dim \Omega = 2,$ the singular set is a
finite set of points and the $W_i$ are smooth curves. In
higher dimensions, the $W_i$ are smooth hypersurfaces; $W_i
\cap W_j$  is a stratified smooth space of co-dimension one, and
in particular $\Sigma$ is of measure zero. We denote by
$S^*_{\Sigma}\Omega$ the set of unit vectors to $\Omega$ based at
points of $\Sigma.$
 We also
define $C^\infty(\partial \Omega)$ to be the restriction of
$C^\infty(\R^n)$ to $\partial \Omega$. We define the open unit ball bundle $B^* (\partial \Omega)^o$ to
be the projection to $T^* \partial \Omega$ of the inward pointing
unit vectors to $\Omega$ along $(\partial \Omega)^o$. We leave it
undefined at the singular points.

For concreteness, here we assume $n=2$ and write the smooth part of the boundary as a disjoint union
$ (\partial \Omega)^o = \bigcup_{j=1}^M W^o_{j},$
where the $W^o_{j}$ are open boundary faces diffeomorphic to open intervals of $\R.$ We let $ r_j: (a_j,a_j+1) \rightarrow W^o_j, \,\, s \mapsto r_j(s)$ denote unit-speed parametrizations of the boundary faces  with $a_0=-\pi, a_M= \pi$ and let
$$y: (a_j,a_{j+1})  \rightarrow H_j \subset H, \,\, s \mapsto Y_j(s),$$ be the parametrization defining the glancing set relative to $H_j.$ Here, the $H_j$'s are just open sub-arcs of $H.$    For fixed small $\eo >0$ let $\chi_j^{\eo} \in C^{\infty}_{0}(\R)$ be a cutoff equal to $1$ on $(a_j +\eo, a_{j+1}-\eo)$ for some boundary face indexed by $j \in \{1,...,M\}.$
It follows that
\begin{align} \label{corner2}
& h^{-1/2}\int \int_{S_{\eo,\pi}} e^{-2S(t)/h} | N^{\C}(h)  \chi_j^{\eo} \phi_h^{\partial \Omega}(t)|^{2} \, \chi_{\eo}(t) \, dt d\overline{t} \\ 
\nonumber&  = (2\pi h)^{-3/2}  \int \int_{S_{\eo,\pi}} \left( \int_{\R} \int_{\R} e^{i\Psi(t,s,s')/h}\chi_j^{\eo}(s) \chi_j^{\eo}(s') \,  \phi_h^{\partial \Omega}(s) \overline{\phi_h^{\partial \Omega}(s')} \, ds ds'  \right) \, dt d\overline{t} \end{align}

The analysis of the last integral on the RHS of  (\ref{corner2})  follows exactly  as in section \ref{opanalysis} and one gets that
$$ h^{-1/2} \int \int_{S_{\eo,\pi}} e^{-2S(t)/h} N^{\C}(h)  \chi_j^{\eo} \phi_h^{\partial \Omega}(t)  \cdot \overline{N^{\C}(h)  \chi_k^{\eo} \phi_h^{\partial \Omega}(t)}  \, \chi_{\eo}(t) \, dt d\overline{t} = \langle Op_h(a_{\gcal}^{(j)}) \phi_h^{\partial \Omega}, \phi_{h}^{\partial \Omega} \rangle_{L^2},$$
where,
$ \text{supp} a_{\gcal}^{(j)} \in C^{\infty}_{0}(B^* W_j^o)$ with $\int_{B^*W_j^o} a^{(j)}_{\gcal}(s,\sigma) \gamma(s,\sigma) ds d\sigma >0.$
Thus, Proposition \ref{mainthm2prop} follows also in the case where $\partial \Omega$ is only piecewise smooth. Theorem \ref{mainthm2} then follows from Theorem \ref{mainthm1} as outlined in the introduction.\qed

 \end{document}